\newcommand{\ue}{u_{\eps}}
\newcommand{\ve}{v_{\eps}}
\newcommand{\di}{\nabla \cdot }
\newcommand{\Tm}{T_{\max}} 
\newcommand{\Tme}{T_{\max,\varepsilon}} 
\newcommand{\intQTm}{\iint_{ Q_{\Tm}}} 
\newcommand{\intQT}{\iint_{Q_{T}}}
\newcommand{\intQt}{\iint_{Q_{t}}}
\newcommand{\R}{\mathbb R}
\newcommand{\pa}{\partial}
\newcommand{\eps}{\varepsilon}
\renewcommand{\div}{\mathrm{div}}
\newcommand{\LO}[1]{L^{#1}(\Omega)}
\newcommand{\LQ}[1]{L^{#1}(Q_T)}
\newcommand{\intO}{\int_{\Omega}}
\def\({\left(}
\def\){\right)}
\newtheorem{theorem}{{\bf Theorem}}[section]
\theoremstyle{definition} \newtheorem{definition}{\bf Definition}[section]
\theoremstyle{assumption} \newtheorem{assumption}{\bf Assumption}[section]
\newtheorem{corollary}{{\bf Corollary}}[section]
\theoremstyle{plain}
\newtheorem{lemma}{Lemma}[section]
\newtheorem{remark}{Remark}[section]
\title{\bf \Large Global weak solvability for doubly degenerate nutrient taxis system in physical dimension}
\author{Bao-Ngoc Tran$^{a,b}$\footnote{Email: bao-ngoc.tran@uni-graz.at, tranbaongoc@hcmuaf.edu.vn} \, and Juan Yang$^{c}$\footnote{Email: yangjuan12@sjtu.edu.cn, yangjuan0912@gmail.com}}
\date{} 
\begin{document}

\maketitle

\vspace*{-1cm}

\begin{center}
{\small
$^{a}$Department of Mathematics and Scientific Computing, University of Graz,  Heinrichstrasse 36, 8010 Graz, Austria \vspace{0.15cm}\\ 
$^{b}$Department of Mathematics, Faculty of Science, Nong Lam University, Ho Chi Minh City, Vietnam \vspace{0.15cm}\\
$^{c}$School of Mathematical Sciences, Shanghai Jiao Tong University, Shanghai 200240, China
}
\end{center}

\begin{abstract} 
Motivated by the study of bacteria's response to environmental conditions, we consider the doubly degenerate nutrient taxis system
\begin{align*}
\begin{cases}
 u_t = \nabla \cdot ( uv \nabla u) - \chi \nabla\cdot (u^{\alpha} v\nabla v) + \ell uv,\\
 v_t = \Delta v - u v ,
\end{cases}  
\end{align*}
subjected to no-flux boundary conditions and smooth initial data, where 
$\alpha \ge 0$ is the bacterial response parameter. Global solvability of weak solutions to this taxis system appears to be highly challenging due to the difficulty of quantifying the dissipation of the doubly degenerate diffusive flux, the strong chemotactic effect, particularly as $\alpha$ is close to $2$, and the dimensionally dependent limitation, which is the most difficult one. Recent findings on the global weak  solvability for the considered system are summarised as follows 
\begin{itemize}
 \item In [M. Winkler, \textit{Trans. Amer. Math. Soc.}, 2021] for  $\alpha=2$, $N=1$;
 
 \item In [M. Winkler, \textit{J. Differ. Equ.}, 2024] for $1\le\alpha\le 2$, $N=2$with initial data of small size if  $\alpha=2$;

 \item In [Z. Zhang, Y. Li, \textit{Math. Models Methods Appl. Sci.}, 2026] for $\alpha=2$, $N=2$; 

 \item In [G. Li, \textit{J. Differ. Equ.}, 2022] for $\frac{7}{6}<\alpha<\frac{13}{9}$, $N=3$; and

 \item In [X.M. De-Ji, A. Huang, Y. Wang, \textit{Eur. J. Appl. Math.}] for $\frac{3}{2}<\alpha<\frac{19}{12}$, $N=3$.  
 
\end{itemize}
Our work aims to provide a picture of global weak solvability for $0\le \alpha < 2$ in the physically dimensional setting $N=3$. As suggested by the analysis, it is divided into three separable cases, including (i) $0\le \alpha\le 1$:
 Weak chemotaxis effect; (ii)  $1<\alpha\le 3/2$: Moderate chemotaxis effect; and (iii) 
  $3/2<\alpha<2$: Strong chemotaxis effect. 

\medskip
 
\noindent \textbf{Keywords}: Chemotaxis system, Nutrient
taxis system, Doubly degenerate nutrient taxis system, Global existence, Bootstrap arguments. 
\end{abstract}

 
\section{Introduction}

\subsection{Problem formulation and history}

With crucial roles in biology and ecology, chemotaxis is well known for describing the biased movement of species along spatial gradients of stimulus concentrations. The study of chemotaxis goes back to microbiology questions, Keller-Segel \cite{keller1971model} and Berg-Brown \cite{berg1972chemotaxis}, about how bacteria move towards food sources. In the last several decades, with wide-ranging applications, such as in bacterial aggregation \cite{berg1972chemotaxis,erban2004individual}, cell invasion \cite{roussos2011chemotaxis,bellomo2015toward}, food chains \cite{tao2022existence,reisch2024global}, 
etc., chemotaxis systems have been investigated by many mathematicians, resulting in an extensive amount of literature, for example, \cite{horstmann20031970,painter2019mathematical,lankeit2020facing,arumugam2021keller} and references therein. 

\medskip

This work considers the complex responses of bacteria to nutrients in the surrounding environment, leading to a so-called doubly degenerate nutrient taxis system. 
Assume that a species of bacteria and its nutrients are considered in a convex and bounded domain $\Omega \subset \R^N$, which has a sufficiently smooth boundary $\Gamma$. In a nutrient-poor medium of a thin agar plate, and so $N=2$, the Bacillus subtilis strain forms densely branched, highly structured, stable patterns \cite{ohgiwari1992morphological}. To capture the experimental observation on the bacterial colony envelope that bacterial cells around the front are highly active but immotile either deeply inside the envelope due to starvation or in the outermost region due to quorum sensing, K. Kawasaki et al.  \cite{Kawasaki1997Modeling}
allowed the bacteria's diffusivity to be low when either the nutrient concentration or the bacterial density is getting low, resulting in the diffusive flux
\begin{align*}
- D(u,v) \nabla u \quad \text{with} \quad D(u,v) = \vartheta uv,
\end{align*}
where $ \vartheta >0$ depends on the media properties. It is useful to note that the original
model from Kawasaki et al. did not include the transport
terms due to taxis. As motivated by the experimental observations of the spatio-temporal patterns from Ben-Jacob and his collaborators \cite{Benjacob2000Cooperative,Cohen1996Chemotactic,Golding1998Studies}, the model in \cite{Kawasaki1997Modeling} was afterwards extended by Leyva-M\'alaga-Plaza \cite{leyva2013effects}, which included the chemotactic flux  
\begin{align*}
S(u,v) \chi(v) \nabla v \quad \text{with} \quad |S| \propto u D,
\end{align*}
where $S(u,v)$ represents the bacterial response function to the nutrient taxis. The notation $\chi$ represents the receptor law that models how cell-surface receptors detect nutrient signals and convert them into movement behaviours, which can be simplified to be a positive constant. The expression $|S| \propto u D$ means that the bacterial response function should
be proportional to the product of the bacterial density and its diffusivity. 

\medskip

For a more general setting with $N=1,2,3$, the derivation of the above diffusive and chemotactic fluxes, and particularly the incorporation of microscopic responses of individual cells into the chemotactic sensitivity, had been considered in \cite{plaza2019derivation} via a parabolic limit of velocity-jump processes, which was postulated based on empirical considerations. The movement of bacterial cells was modelled by the so-called run-and-tumble motion—a sequence of runs separated by tumbles, which are turns around mostly the same location for orientation. By experimental observation, the velocity-jump process is relevantly a Poisson process with a turning rate $\lambda$, and therefore the probability density function $p=p(t,x,\boldsymbol{v})$, for $t>0$, $x\in \Omega$ and\footnote{Here, $V$ is the set of allowed velocities, usually regarded to be symmetric and compact} $\boldsymbol{v}\in V \subset \mathbb{R}^N$, describing the population of agents can be governed using the forward Kolmogorov equation
\begin{align*}
    \partial_t p + \nabla_{ {x}} \cdot (\boldsymbol{v}p) = \lambda \int_{V} \mathcal T (\boldsymbol{v},\boldsymbol{v}') p d\boldsymbol{v}' - \lambda p + \mathcal G, \quad u= \int_V p(t,x,\boldsymbol{v}) d\boldsymbol{v},  
\end{align*}
where the turning kernel $\mathcal T$ is the probability density of turning from velocity $\boldsymbol{v}'$ to velocity $\boldsymbol{v}$, and $\mathcal G$ takes into account the interaction between the bacterial and nutrient cells. The turning rate $\lambda =\lambda(t,x,\boldsymbol{v})$ depends on both the bacterial density and the nutrient concentration, and particularly the nutrient gradient. By  a parabolic scaling with a sufficiently small relaxation parameter $0<\tau\ll1$, the authors of \cite{plaza2019derivation} performed an asymptotic analysis for the parabolic limit as $\tau \to 0$ using the Ansatz $p_\tau=p_0 + \tau p_1 + O(\tau^2)$, where the corresponding perturbed turning rate are chosen such that  $\lambda_0 \propto 1/(u_0v)$ and $\lambda_1 \propto \kappa(v)(v \cdot \nabla v) $, see \cite{mendez2012density,
othmer2002diffusion}, to derive the system    
\begin{align} 
\begin{cases}
 u_t = \nabla \cdot ( uv \nabla u) - \chi \nabla\cdot (u^2 v\nabla v) + \ell uv & \text{in } \Omega \times (0,\infty),\\
 v_t = \Delta v - u v & \text{in } \Omega \times (0,\infty), 
\end{cases} 
\label{OriginModel}\tag{0}
\end{align}
in combination with the nutrient
consumption mechanism, where $\chi>0$. $\ell\ge 0$ and the diffusion coefficients have been renormalised. This system is supplemented with no-flux boundary conditions and regular initial conditions.

\medskip

 In the last few years, the nutrient taxis system \eqref{OriginModel} has gained much attention from the community with a more generic chemotactic flux $ u^\alpha v \nabla v$ for a suitable value of the bacterial response parameter $\alpha$, see \cite{winkler2021does,winkler2024bounds,li2022large,zhang2026boundedness,DeJi2026Stabilization}. Taking into account this generic chemotactic flux,  this work is devoted to studying the system
\begin{align}\label{System:Main}
\begin{cases}
  \displaystyle \pa_t u = \di (u v \nabla u ) - \chi \di ( u^\alpha v \nabla v) + \ell uv, & \text{in } \Omega \times (0,\infty) , \vspace*{0.15cm} \\
  \displaystyle  \pa_t v = \Delta v - uv, & \text{in } \Omega \times (0,\infty) , \vspace*{0.15cm}\\
  \displaystyle  \frac{\partial u}{\pa \nu}=\frac{\partial v}{\pa \nu}=0, & \text{on } \Gamma \times (0,\infty) , \vspace*{0.15cm}\\
u (x,0)=u_{0}(x) ,\quad v (x,0)=v_{0}(x), & \text{on } \Omega,
  \end{cases}
\end{align}
with different effects of the bacterial response, ranging in the interval 
\begin{align}
0\le\alpha < 2,
\end{align}
where $u_0,v_0$ are given nonnegative initial data. System \eqref{System:Main} is called a  
\textit{doubly degenerate} nutrient taxis system since the diffusive flux is doubly nonlinear, and it degenerates at least a small bacterial density or signal concentration. 

\medskip

 The study of the system \eqref{System:Main} primarily focuses on global weak solvability. In \cite{winkler2021does}, the author demonstrated the global existence of a weak solution to \eqref{System:Main} for $\alpha=2$ in a one-dimensional setting, while also obtaining some qualitative behaviour of solutions, where the main idea is to utilise the gradient-like structure
\begin{align*}
 \frac{d}{dt}\left( - \intO \log u + \frac{1}{2} \intO v_x^2  \right) = - \intO \frac{v}{u} u_x^2 - \intO v_{xx}^2 - \intO uv_x^2 - \intO v.
\end{align*}
In \cite{winkler2022small} and later on in \cite{winkler2024bounds}, the global weak solvability of \eqref{System:Main} for $1\le \alpha\le 2$, $N=2$, had been proved by balancing the evolution of the energy-like quantity  
\begin{align}
  \intO u^p + \intO  \frac{|\nabla v|^q}{v^{q-1}} \quad \text{for any } p>2 \text{ and } q=2  
  \label{Struc:1}
\end{align}
based on the functional inequality, for smooth functions $\varphi,\psi$ and $p\ge 1$,
\begin{align}
 \intO \varphi^{p+1} \psi \le C(p) \intO \varphi^{p-1} \psi |\nabla \varphi|^2 + C(p) \left( \intO \varphi \right)^p  \left( \intO \frac{\varphi}{\psi} |\nabla \psi|^2 + \intO \varphi \psi \right) ,
 \label{FuncIneqn:1}
\end{align} 
see \cite[Lemma 3.2]{winkler2022small}, 
where we note that the first term on the right-hand side is a part of the dissipation of the functional $\intO u^p$, and an appropriate smallness assumption on the initial data was imposed in case  $\alpha=2$ to control the taxis-driven contribution. In the same spatial dimensional  setting, i.e. $N=2$, this smallness condition had been removed in\cite{zhang2026boundedness}, where the authors investigated the structure 
\begin{align}
 \frac{d}{dt} \left(  C_1 \intO  u \log u  +  \intO  \frac{|\nabla v|^4}{v^3} \right) + C_2 \intO  u\frac{|\nabla v|^4}{v^3}  \le C  \left( \intO u^2v|\nabla v|^2 + \intO u^2 v + \intO uv \right)
 \label{Struc:2}
\end{align}
for some suitable constants $C_1,C_2 >0$, and therefore, obtained the global solvability by constructing a bootstrap argument based on the $L^{p_k}$-energy with $p_k \to \infty$ as $k\to \infty$. The taxis-driven contribution in \eqref{Struc:2} was  controlled by 
refining the inequality \eqref{FuncIneqn:1} for $p\ge 1$  as 
\begin{align}
 \intO \varphi^{p+1} \psi \le C(p) \left( \intO \frac{\varphi}{\psi} |\nabla \psi|^2 +  \intO \frac{\psi}{\varphi} |\nabla \varphi|^2 + \intO \varphi \psi \right) \intO \varphi^p ,
 \label{FuncIneqn:2}
\end{align} 
and proposing the new functional inequality, for any $p\ge 1$ and $\eta>0$,
\begin{align}
\begin{split}
  \intO \varphi^{p+1} \psi |\nabla \psi|^2  \le \eta \left( \intO \frac{\psi}{\varphi^{1-p}}
|\nabla \varphi|^2 +\intO \varphi \psi \right)  + C  \left( \intO \varphi^{p+1} \psi + \left( \intO \varphi \right)^{2p+1} \right) \intO \frac{|\nabla \psi|^4}{\psi^3},
\end{split}
\label{FuncIneqn:3}
\end{align}
where the constant $C=C(p,\eta)$ contains also the $L^\infty(\Omega\times(0,T))$-norm of $\psi$, see \cite[Lemmas 3.4 and 3.5]{zhang2026boundedness}. 

\medskip

In these works, with $N=1,2$, the Sobolev embeddings and the proposed functional inequalities are highly effective in establishing necessary a priori estimates, which are sufficient to control the taxis-driven contribution and to carry out a bootstrap argument leading to global weak solvability. However, the inequalities \eqref{FuncIneqn:1}, \eqref{FuncIneqn:2}-\eqref{FuncIneqn:3} typically depend on the spatial dimension $N$. In \cite{li2022large} with $N=3$, by considering the same structures \eqref{Struc:1} and \eqref{Struc:2}, and matching the inequalities \eqref{FuncIneqn:1}, \eqref{FuncIneqn:2}-\eqref{FuncIneqn:3} to their three-dimensional version 
\begin{align}
 \|\varphi^{p+1} \psi\|_{L^r(\Omega)} \le \eta \left( \intO  \frac{\varphi^{p+1}}{\psi} |\nabla \psi|^2 + \intO\frac{\psi}{\varphi^{1-p}} |\nabla \varphi|^2  \right) + C(p,r,\eta) \left( \intO \varphi \right)^p \intO \varphi \psi ,
  \label{FuncIneqn:4}
\end{align} 
the author 
obtained the global existence of a weak solution to \eqref{System:Main} for $7/6<\alpha<13/9$. Recently, in the same spatial dimensional setting, the authors of \cite{DeJi2026Stabilization} have extended the result for another interval of the responsible parameter $3/2<\alpha<19/12$.

\medskip

The question of the global weak solvability for the considered problem \eqref{System:Main} has become highly challenging, particularly as $\alpha$ is closer to $2$.

\subsection{Main goal and challenges} The main goal of this paper is to establish the global existence of a weak solution to the problem \eqref{System:Main} in the physical dimension, i.e. $N=3$, and for the wide range of bacterial responses  $0\leq \alpha < 2$. Let us analyse the main challenges of our problem, as presented below. 
\begin{itemize}
 \item \textbf{Quantification of doubly degenerate diffusion}: The first core challenge stems primarily from the doubly nonlinear degenerate diffusion. As straightforward approaches, for instance, in \cite{wang2015global,ishida2012global,lankeit2017locally}, nonlinear diffusion processes with degeneracies that explicitly depend only on the species density have been addressed reasonably and effectively. However, these approaches are not extendible in the context of doubly nonlinear degenerate diffusion, since energy dissipation depends on both the species density and the nutrient concentration, with the possibility of degeneracy arising from the diffusive flux. 
Therefore, quantifying its dissipation is crucial to our analysis.

\item \textbf{Strong taxis-driven contribution}: The second core challenge also stems from the strong chemotactic effect, which is particularly pronounced at high species density when $\alpha$ is close to $2$. To control this effect, as appearing in energy-dissipation inequalities, a direct absorption into the quantified dissipation of the structures \eqref{Struc:1} and \eqref{Struc:2}, in light of the inequality \eqref{FuncIneqn:3}, is certainly inadequate. In other words, finding new energy structures and their possible combinations is needed. Moreover, the utilisation of both the new and the well-known structures is suggested not only from their dissipation but also from the boundedness itself, since this boundedness is in $L^\infty$ in time. 

\item \textbf{Dimensionally dependent limitation}: Finally, many analysis tools from prior results are not applicable because they strictly rely on the dimension, which does not hold in the physical-dimension setting $N=3$.
\end{itemize}

Our new ideas to overcome the above challenges will be explained in Section \ref{Sec:Newideas}. Let us present our main results in the following subsection.

\subsection{Main results} The main result is presented in Theorem \ref{MainTheo}, which is derived from our analysis across different ranges of the chemotactic effects. For this result, we impose the technical assumption \ref{Ass:Initial} on the initial data. Our study does not focus on weakening this assumption, since the question of global solvability of the system \eqref{System:Main} is highly challenging, even under sufficiently strong assumptions on the regularity of the initial data. Moreover, it is emphasised that our results require no smallness condition imposed on the initial data, and from this point onward we set $\chi=1$ for convenience, exactly as stated in Theorem~\ref{MainTheo}. 
 
\begin{assumption}\label{Ass:Initial} 
Assume that the initial datum $0 \not \equiv u_0 \in W^{1,\infty}(\Omega)$ is nonnegative, and satisfies $1/u_0\in L^1(\Omega)$. Furthermore, let $v_0 \in W^{2,\infty}(\Omega)$ be strictly positive on  $\overline{\Omega}$, subjected to the no-flux boundary condition $  \frac{\partial v_0}{\pa \nu}|_\Gamma=0$, and suppose that $ \log (1/v_0) \in W^{2-\frac{2}{q_*},q_*}(\Omega)$ for some $q_*>5/2$. 
\end{assumption}

\begin{theorem}  \label{MainTheo} 
Let $N=3$,  $\ell\ge0$, and $0\leq \alpha<2$.  Under Assumption
\ref{Ass:Initial}, there exists a
global weak solution to the system \eqref{System:Main} in the sense of
Definition \ref{Def:Weak}.  Moreover, for every $T\in (0,\infty)$ and every
$p\in(1,\infty)$ there is a constant $C_{T,p}>0$ such that
\begin{align*}
 \|u\|_{L^\infty(Q_T)} + \|u\|_{L^2((0,T);H^1(\Omega))}  + \|v\|_{L^\infty((0,T);W^{1,\infty}(\Omega))}  + \|v\|_{L^p((0,T);W^{2,p}(\Omega))} \le C_{T,p}.  
\end{align*} 
\end{theorem}

In the following, we introduce the definition of a weak solution as mentioned in Theorem \ref{MainTheo}, which was also considered in the existing results, e.g. see \cite{winkler2024bounds,zhang2026boundedness,li2022large}. 

\begin{definition}\label{Def:Weak}
Suppose that Assumption
\ref{Ass:Initial} holds. A pair $(u,v)$ of nonnegative functions
 \begin{align*}
u\in L_{loc}^1(\overline{\Omega}\times[0,\infty)) \quad \text{and} \quad 
v\in L_{loc}^\infty(\overline{\Omega}\times[0,\infty))\cap L_{loc}^1([0,\infty);W^{1,1}(\Omega)) 
 \end{align*}
 will be called a global weak solution of \eqref{System:Main} if 
 \begin{align*}
  u^2 \in L_{loc}^1([0,\infty);W^{1,1}(\Omega)) \quad\mathrm{and}\quad 
  u^\alpha \nabla v\in L_{loc}^1(\overline{\Omega}\times[0,\infty);\mathbb{R}^3), 
 \end{align*}
 and 
 \begin{gather*} 
  -\iint_{(0,T)\times\Omega} u\varphi_{t}-\int_{\Omega}u_{0}\varphi(0) = \iint_{(0,T)\times\Omega} \left(- \frac{1}{2} v \nabla u^2 \cdot \nabla \varphi + \chi u^\alpha v \nabla v \cdot \nabla \varphi + \ell uv \varphi \right), \\  \iint_{(0,T)\times\Omega} v\varphi_t+\intO v_0\varphi(0)=\iint_{(0,T)\times\Omega} ( \nabla v\cdot\nabla\varphi + u v \varphi )  
 \end{gather*}
 for any $0<T<\infty$ and all $\varphi\in C_0^\infty(\overline{\Omega}\times[0,T))$ such that $\frac{\partial\varphi}{\partial\nu}=0$ on $\Gamma\times(0,T)$.
\end{definition}

{\bf Organisation}: Based on o methodology, we divide our analysis into three separable cases: (i) $0\leq \alpha\le 1$ --
 Weak chemotaxis effect; (ii)  $1<\alpha\le 3/2$ -- Moderate chemotaxis effect; and (iii) 
  $3/2<\alpha<2$ -- Strong chemotaxis effect.  In the next section, we present the regularised system and basic a priori estimates for the regularised solution.  The 
proof of Theorem \ref{MainTheo} will be divided into Sections \ref{Sec:Weak}-\ref{Sec:Stro}, respectively, for the cases (i)--(iii) of the chemotactic effect. 

\medskip

{\bf Notations}: We denote $Q_T:=\Omega \times(0,T)$ for $0<T\le \infty$.  The notations $L^q$, $W^{l,q}$, for $1\le q\le \infty$ and $l\ge 0$, are used for the standard Lebesgue and Sobolev spaces. Throughout the paper, we denote by $C$ a general positive constant that does not depend on $x,t$, the unknowns, and the approximate parameter $\varepsilon$. It can vary from line to line, or sometimes, in the same line. As dependencies are important, for example, the dependence on $T$, we will explicitly write $C_T$, and so on.

\section{New energy structures and key ideas}
\label{Sec:Newideas}

Since we have not introduced the regularised solution, all mathematical expressions in this section will be presented for the formal solution $(u,v)$, which will be made rigorous later on. 

\medskip

\textbf{Observation of new energy structures}: As mentioned in Subsection 1.1, the prior results are based on the evolution of the energy structures  
\begin{align}
 \intO u(t)\log u(t) \quad \text{and} \quad \intO \frac{|\nabla v|^4}{v^3},
 \label{well-knownEnergy}
\end{align}
as well as a combination of them, which was used as an a priori estimate to control the evolution of the $L^p$-energy $\intO u^p$ for $p>1$. Unfortunately, these are not sufficient for our problem due to the challenges analysed earlier; therefore, new energy structures are needed. At this point, we found that new energy structures for the problem can be formed due to
the following important observations.
\begin{itemize}
 \item \textit{Competitive diffusive-chemotactic flux}: The boundedness of solutions is mainly ``decided" by the competition between the signal-dependent diffusion degeneracy and chemotaxis, which can be rewritten as
\begin{align}
 u v \nabla u - u^\alpha v \nabla v 
 = u^ \alpha v \nabla \left( \frac{1}{2-\alpha} u^{2-\alpha} - v \right) .
 \label{Flux:Compe}
\end{align} 
This flux suggests testing the equation for $u$ by $\frac{1}{2-\alpha} u^{2-\alpha} - v$, namely, suggests to consider the energy $\intO u\left( \frac{1}{(2-\alpha)(3-\alpha)} u^{2-\alpha} - v\right)$. In more detail, it shows 
\begin{gather}
\begin{gathered}
  \frac{d}{dt} \intO \left( \frac{1}{(2-\alpha)(3-\alpha)} u^{3-\alpha} - u v \right)  
 + \intO u^\alpha v \left| \nabla \left( \frac{1}{2-\alpha} u^{2-\alpha} - v \right)\right|^2 \\
= \ell \intO \left( \frac{1}{2-\alpha} u^{3-\alpha}v - u v^2 \right) - \intO  u \Delta v + \intO u^2 v ,
\end{gathered}
\label{Struc:3}
 \end{gather}
see Lemma \ref{L:FirstEnergy}. Possible combinations of this energy with the well-known energies \eqref{well-knownEnergy} will give a \textit{control} of the $L^\infty((0,T);L^{3-\alpha}(\Omega))$-norm of $u$, which is significantly better than the $L^\infty((0,T);L^{1}(\Omega))$-norm as $\alpha<2$. 

 \item \textit{The structure of $u^pv^q$ has the necessary dissipation}: It is already found that the consumption mechanism $-uv$ is helpful in estimating $\frac{|\nabla v|^4}{v^3}$, a better type of estimate compared to $|\nabla v|^4$ since it may contains more singularity. Consequently, the integrability of $u^pv^q$ for some $p,q>0$ on $\Omega \times (0,T)$ is easier to obtain than that of $u^p$, see e.g. \cite{winkler2024bounds,zhang2026boundedness,li2022large,DeJi2026Stabilization}.  
 However, aside from these points (in the existing results), we found that the evolution of this product exhibits additional dissipation, that is, a compensation for the well-known energies \eqref{well-knownEnergy}; see Lemma \ref{L:St:upvq}. 

 \item \textit{The $L^p$-energy with $p \in (0,1)$ or even with negative exponent}: As usual expectation, the evolution of $u^p$ has been considered for $p>1$. Since this is one of the critical estimates of the problem, it is difficult to obtain at the very beginning of the analysis. Our approach is to consider this energy for some $p \in (0,1)$ (e.g., Lemmas \ref{L:Basic:MixUV}, \ref{L:Mod:Key}), and in particular for $p<0$ (e.g., Lemmas \ref{L:Mod:Key}, \ref{L:Stro:Key-1}), thereby yielding some dissipation, albeit at a lower level. We then develop estimates for solutions from this lower regularity.

\end{itemize}
The structures observed above are known in the general analysis of PDEs; However, their integration in this paper constitutes a completely novel and uniquely powerful framework. They have not been utilised in the existing results for doubly degenerate nutrient taxis systems. These new structures, as well as possible combinations with the well-known structures \eqref{well-knownEnergy}, provide more comprehensive insight into the problem. In the physical dimension $N=3$, the rigorous conclusion obtained here extends the previously covered interval
\begin{align*}
 1.166...=\frac{7}{6}<\alpha<\frac{13}{9}=1.444... \quad \text{and} \quad \frac{3}{2}<\alpha<\frac{19}{12}=1.583...
\end{align*}
to the range $0\leq \alpha <2$.

\medskip

\textbf{Key ideas and contribution}:  
 For $0\leq \alpha\le1$, the modified competitive energy in
Lemma \ref{L:SublinearEnergy}, the essential boundedness of $v$ on $\Omega \times (0,T)$, for any $0<T\le \Tm$, and the maximal regularity allow us to balance the structure \eqref{Struc:3} when $0\leq \alpha \le 1$, resulting as 
\begin{align}
\sup_{0\le t\le T} \intO u^{3-\alpha}(t) +  \intQT  |\Delta \ve|^{3-\alpha} \le C_T,
\label{Bound:1}
\end{align}
see Lemma \ref{L:Weak:L2Est}.This allows us to perform a short bootstrap in Lemma \ref{L:Weak:BS} using the $L^p$-energy to obtain the boundedness of $u$ in $L^p((0,T)\times\Omega )$ for any $1<p<\infty$. It suffices to show that $v$ is positively lower-bounded if the initial data is well-prepared, thereby reducing the system from the doubly diffusive degeneracy to a degeneracy only in the species density. Hence, the essential boundedness and the H\"older continuity of solutions are ensured, consequently implying the global solvability, see Lemma \ref{L:Weak:Global}.

\medskip

\textit{The case $1<\alpha\le \frac{3}{2}$}: Balancing the structure \eqref{Struc:3} is now difficult since $3-\alpha<2$. In fact, due to the structures \eqref{well-knownEnergy}, it requires controlling the taxis-driven contribution $\iint_{\Omega \times (0,T)} u^{2\alpha-2} v |\nabla v|^2$, which can be achieved as in Lemma \ref{L:Mod:FirstEst} by establishing short yet subtle estimates Lemmas \ref{L:BasicEst:v}-\ref{L:Basic:MixUV} and using the $L^p$-energy estimate for $p\in(0,1)$. Consequently, 
\begin{align}
  \sup_{0\le t\le T} \left( \intO  u(t) \log u(t) +   \frac{|\nabla v(t)|^4}{v^3(t)} \right) +\intQT v|\nabla u|^2 +\intQT  u\frac{|\nabla v|^4}{v^3} \le C_T.
  \label{Diss:1}
\end{align} 
The dissipation obtained in \eqref{Diss:1} will be quantified thanks to the functional inequality \eqref{L:SOBOineqn:State1} in Lemma \ref{L:SOBOineqn} and the use of the $L^p$-energy with a suitable negative exponent, resulting in the integrability of $u^2v$, see Subsection \ref{Sec:Mod:m=2}. Then, to obtain the boundedness of $u$ in $L^\infty(0,T;L^p(\Omega))$ for any $1<p<\infty$, we need to control the corresponding $L^p$-energy with the same exponent, where the dissipation and the taxis-driven contribution read  \begin{align*}
 \intQT  u^{p-1} v |\nabla u|^2 \quad \text{and} \quad \intQT  u^{p-3+2\alpha} v |\nabla v|^2,  
\end{align*} 
respectively. Since the dissipation in \eqref{Diss:1} yields very low regularity,  extending the bacterial response parameter range $7/6<\alpha<13/9$ in \cite{li2022large} to $1<\alpha\le 3/2$ seems highly challenging. Our idea is to perform the so-called \textbf{two-phase bootstrap argument}, starting from that low regularity and progressing to a certain intermediate regularity in the first phase, and then improving the system such that it is sufficient to perform the second phase. 
Indeed, defining the component-wise increasing sequence $(m_k,p_k,r_k)_{k=0,1,\dots}$ by $m_0 := 2$ and
\begin{gather*}
  p_{k}:= \frac{m_{k}}{2} + \frac{7}{2} - 2\alpha, \quad 
  r_{k} : = \min\left\{ \frac{4}{3}p_{k} -2 ;\, p_{k}-1 \right\} , \quad 
m_{k+1} := \frac{2}{3}p_k + r_k +2 ,  
\end{gather*}
we show that, see Subsections \ref{Sec:M:FB}-\ref{Sec:M:2BS},  
\begin{itemize}
 \item[(M1)]  $\displaystyle 
\intQT  u^{m_k} v \leq C_T \quad \text{implies} \quad 
\sup_{0<t<T}\intO u^{p_k}(t) + \intQT  u^{r_k} v|\nabla u|^2 \leq C_{T,k}$; 

\item[(M2)] $ \displaystyle
  \sup_{0<t<T} \intO u^{p_k}(t) + \intQT  u^{r_k} v|\nabla u|^2 \leq C_T \quad \text{implies}$ 
$$ \left\{ \begin{array}{lll} 
  \displaystyle \intQT u^{m_{k+1}} v \leq C \intQT  u^{r_k} v|\nabla u|^2+ C \intQT  u^{r_k+2}  \frac{|\nabla v|^2}{v}, \vspace{0.15cm}\\
  \displaystyle \intQT  u^{r_k+2} \frac{|\nabla v|^2}{v} \leq C_{T,\eta,k} +  \eta \intQT  
 u^{m_{k+1}} v  \hfill \quad \text{for any } \eta>0,& 
  \end{array} \right. $$
\end{itemize}
which yields the integrability of $u^{m_{k+1}} v$. This phase provides a control on the $L^p$-energy for some $p>3$ and, subsequently, the integrability of $u^mv$ for some $m>6$. This certain level of regularity gives the boundedness of $\nabla v$ in $L^\infty(Q_T)$ using the heat semigroup, and allows us to perform the second phase by defining another component-wise increasing sequence 
\begin{align*} 
\widehat m_0> 6, \quad \widehat p_k:= \widehat m_k+3-2\alpha,  \quad 
  \widehat r_k : = \widehat p_k-1 ,  \quad \widehat m_{k+1} := \frac{2}{3}\widehat p_k + \widehat r_k +2,
\end{align*}
resulting in the solution regularity in  $L^\infty(0,T;L^p(\Omega))$. Note that the case $\alpha>3/2$ falls outside the control of this approach since the sequence $(p_k)$ is no longer increasing. 

\medskip 

\textit{The case $\alpha>\frac{3}{2}$}: The taxis-driven contribution is much stronger, particularly as $\alpha$ is closer to $2$, where achieving the estimate \eqref{Diss:1} seems very difficult. This is shown only in    \cite{DeJi2026Stabilization} for $3/2<\alpha<19/12$. In our analysis, we propose  an unusual control of the form
\begin{align*}
 \iint_{\Omega \times (0,T)} \frac{v}{u} |\nabla u|^2 \le C_{T} + \intO u^{-\kappa}  
\end{align*}
for some small constant $0<\kappa<1$ (see Lemma \ref{L:Stro:Key-1}), which in combination with the dissipation of the $L^q$-energy for $1-\alpha < q < \min(-1/3, 4/3-\alpha)$ gives the boundedness of $u^mv$ in $L^1(\Omega \times (0, T))$ for $m=4/3$ and some more useful regularity in Lemma \ref{L:Stro:Key}. This is obviously too weak to bootstrap the system. 
To obtain the estimate \eqref{Diss:1} and improve the solution regularity serving for the global solvability proof, the control of the terms 
\begin{align}
 \iint_{\Omega \times (0,T)} u^{p-1} v^{2} |\nabla u|^2 \quad \text{and} \quad \iint_{\Omega \times (0,T)} u^{p-1+\alpha} v|\nabla v|^2
\label{MixTerms}
\end{align}
for $0<p<1$, but close to $1$, will be the key. We observe that the dissipation of the structure $u^pv^q$ includes exactly the mixed terms in  \eqref{MixTerms}, namely, \textbf{the structure $u^pv^q$ has the needed dissipation}. Indeed,  using the evolution, see Lemma \ref{L:St:upvq},
\begin{align*}
 \begin{split}
  \frac{d}{dt} \intO u^p v^q& = p(1-p)\intO u^{p-1} v^{q+1} |\nabla u|^2 + p q\intO u^{p-1+\alpha} v^{q} |\nabla v|^2 + p \ell \intO u^{p} v^{q+1} \\
  & - p (1-p)\intO u^{p-2+ \alpha} v^{q+1} \nabla u \cdot \nabla v - p q\intO u^{p} v^{q} \nabla v \cdot \nabla u \\
  & - pq \intO u^{p-1} v^{q-1} \nabla v \cdot \nabla u -q (q-1) \intO u^p v^{q-2} |\nabla v|^2 - q \intO u^{p+1} v^q ,
 \end{split} 
\end{align*}
we obtain a possible coupling of the well-known structures \eqref{well-knownEnergy} with the new structures $u^pv^q$ and the $L^p$-energy for $0<p<1$ in Lemma \ref{L:St:Comb}. Then, utilising the functional inequality in  Lemma \ref{L:SOBOineqn}, we derive necessary estimates
for the terms \eqref{MixTerms} in Lemmas \ref{L:St:B:Key0a}-\ref{L:St:B:Key0a2}, which can be quantified to have the integrability of $u^{m}v$ with $m=7/3$ (Lemma \ref{L:St:B:Key-1}), and of $u^{q_0+2}v|\nabla v|^2$ for some $q_0>2\alpha-4$ (Lemma \ref{L:St:B:Key0c}). This regularity is the most important ingredient for our bootstrap argument, for which the needed regularity as $1<\alpha\le3/2$ is only $m=2$.    

 \medskip

At this stage, a natural approach would be to proceed with the bootstrap argument, as in the previous case (i.e., $1<\alpha\le 3/2$). However, because of the strong chemotactic effect with the possibility of $\alpha$ close to $2$, an argument of singularity-enhancing feedback (denoted by (S2) below) is required in our method. From the prior results, after successfully controlling the structures \eqref{well-knownEnergy}, or more precisely, after having the estimate
\begin{align*}
  & {\small  \underbrace{\sup_{0<t<T} \int_{\Omega}  u \log u +  \iint_{Q_T}  v|\nabla u|^2}_{\text{has fully been utilised}}+  \underbrace{\sup_{0<t<T} \int_{\Omega}\frac{|\nabla c|^{4}}{c^{3}}}_{\text{has not been utilised}}+  \underbrace{\iint_{Q_T} \left( \frac{|\nabla c|^{6}}{c^5} + u \frac{|\nabla c|^{4}}{c^3}  \right)}_{\text{has been utilised}} }  \le C_T,
 \end{align*} 
the boundedness of $\sup_{0<t<T} \int_{\Omega}\frac{|\nabla c|^{4}}{c^{3}}$ remains unused and elusive.  
Our idea is to unlock the use of this boundedness, or in other words, to exploit the ``whole" energy-dissipation estimate. Indeed, defining the bootstrap sequence, see Lemmas \ref{L:Stro:Sequence}-\ref{L:SeqProp}, 
\begin{align*}
p_{k}:= q_{k} + 5 - 2\alpha, \quad 
  r_{k} : =  p_{k}-1,    \quad q_{k+1} := \min\left\{2p_k-2;\, \frac{7}{6}p_k-1 \right\}  
\end{align*}
with $q_0 > 2\alpha-4$, we show in Lemmas \ref{L:Feedback3}, \ref{L:Feedback1}, and \ref{L:Feedback2} that 
\begin{itemize}
 \item[(S1)\hspace{0.15cm}] \hspace{-0.15cm} $\displaystyle \intQT  u^{q_k+2} v |\nabla v|^2 \le C_T \quad \text{implies} \quad\sup_{0<t<T}\intO u^{p_k}(t) + \intQT u^{r_k} v  |\nabla u|^2 \le C_{T,k}$; 
 \item[(S2)\hspace{0.15cm}] \hspace{-0.15cm} $\displaystyle  \sup_{0<t<T}\intO u^{p_k}(t) + \intQT u^{r_k} v  |\nabla u|^2 \le C_{T}
  \; \text{ implies} \; \left\{ \begin{array}{llll}
\displaystyle \sum_{i=1}^4 \intQT u^{q_{k+1}-\frac{i}{6}p_k} v^{\frac{6-i}{2}}  |\nabla u|^2 \le C_{T,k}, \vspace{0.15cm} \\
 \displaystyle  \intQT u^{q_{k+1}-\frac{2}{3}p_k+2}  \frac{|\nabla v|^2}{v}    \le C_{T,k};
\end{array} \right. $  
 \item[(S3)\hspace{0.15cm}] \hspace{-0.15cm} $\left\{ \begin{array}{llll}
\displaystyle \sum_{i=1}^4 \intQT u^{q_{k+1}-\frac{i}{6}p_k} v^{\frac{6-i}{2}}  |\nabla u|^2 \le C_T, \vspace{0.15cm} \\
 \displaystyle  \intQT u^{q_{k+1}-\frac{2}{3}p_k+2}  \frac{|\nabla v|^2}{v}    \le C_{T}
\end{array} \right.
 \quad \text{implies} \quad \displaystyle  \intQT  u^{q_{k+1}+2} v |\nabla v|^2 \le C_{T,k}, $
\end{itemize}
where the boundedness of $\sup_{0<t<T} \int_{\Omega}\frac{|\nabla c|^{4}}{c^{3}}$ is crucial to prove  (S2)-(S3).  The term \textit{singularity enhancement} is understood in the sense that the feedback in (S2) includes $v^{-1}|\nabla v|^2$ with the exponent of $v$ decreased by $2$ relative to (S1), yielding that the feedback in (S2) may contain stronger singularity than in (S1) and the likelihood of $v$ being bounded below by a positive constant increases. These feedbacks constitute our bootstrap arguments in Lemma \ref{L:Stro:Boot}.

\medskip

Finally, it is worth remarking that the global weak solvability of \eqref{System:Main} as $\alpha=2$ and $N=3$ has become a highly challenging question, since the proposed methods, not only in this work but also in the related works mentioned earlier, seem inextendible. On the other hand, finding an optimal condition of the form $\alpha  - (m-1) < f(N) $ for having the global solvability of the more generalised system  
\begin{align*} 
\begin{cases}
  \displaystyle \pa_t u = \div (u^{m-1}v \nabla u ) -   \div ( u^\alpha v \nabla v)  , & \text{in } \Omega \times (0,\infty) , \vspace*{0.15cm} \\
  \displaystyle  \pa_t v = \Delta v - uv, & \text{in } \Omega \times (0,\infty) ,  
  \end{cases}
\end{align*}
is also another challenging question, where the quantity $f(N)$ depends on the dimension that \textit{quantifies} how strong doubly degenerate nonlinear diffusion and the chemotactic effect are. From our results in Theorem \ref{MainTheo}, as $m=1$, $f(3)<2$ can be arbitrarily close to $2$. These questions will be in our subsequent investigation. 
 
\section{Regularised system and a priori estimates}

\subsection{Regularised system}  
\label{Sec:Regularise}

In this part, we introduce a regularisation of the system \eqref{System:Main} and recall the local existence of a classical solution. For this purpose, we regularise $u^\alpha$ by $S_{\alpha,\varepsilon}(u^\alpha)$ for a sufficiently small regularised parameter $\eps\in(0,1)$, where the function $S_{\alpha,\varepsilon}$ is defined as   
\begin{align}
 S_{\alpha,\varepsilon}(s):=
\begin{cases}
 s(s+\varepsilon)^{\alpha-1},&0\leq \alpha\le1,\\
 s^\alpha,&1<\alpha<2,
 \end{cases} 
 \label{Def:SublinearSensitivity}
\end{align}
for all $s\in (0,\infty)$. 
Here, we note that $S_{\alpha,\varepsilon}$ is locally Lipschitz. More precisely, the regularised problem of finding $(u_\varepsilon,\ve )$ is given by 
\begin{align}\label{System:RegPro0}
  \begin{cases}
  \displaystyle \pa_t \ue = \di (  \ue \ve \nabla \ue) - \di (  S_{\alpha,\varepsilon}(\ue) \ve \nabla \ve) + \ell \ue  \ve, & \text{in } \Omega\times (0,\infty), \vspace*{0.15cm} \\
  \displaystyle  \pa_t \ve = \Delta \ve - \ue \ve, &\text{in } \Omega\times (0,\infty), \vspace*{0.15cm}\\
  \displaystyle  \frac{\partial \ue}{\pa \nu}=\frac{\partial \ve}{\pa \nu}=0, & \text{in } \Gamma \times (0,\infty), \vspace*{0.15cm}\\
  (\ue(0), \ve(0) )= (u_{0}+\eps,v_{0}), & \text{in } \Omega.
  \end{cases}
 \end{align} 
By the same argument as \cite{winkler2022small,li2022large}, we directly obtain the local existence of the classical solution to the system, presented in the following lemma.

 \begin{lemma} 
 \label{L:LocalExis}
 Assume that $0\leq \alpha\le 2$ and the initial data is given by Theorem \ref{MainTheo}. For each $\varepsilon \in (0, 1)$, the regularised problem \eqref{System:RegPro0} has a unique classical solution $(\ue,\ve)$ up to the interval $[0,\Tme)$, $0<\Tme \le \infty$, in the sense that
\begin{align}
 \begin{gathered}
u_\eps\in C^0(\overline{\Omega}\times[0,T_{max,\eps}))\cap C^{2,1}(\overline{\Omega}\times(0,T_{max,\eps})),\\
v_\eps\in\cap_{q>2}C^0([0,T_{max,\eps});W^{1,q}(\Omega))\cap C^{2,1}(\overline{\Omega}\times(0,T_{max,\eps})),  
\end{gathered}
\label{L:LocalExis:S0}
\end{align} 
and $\ue,\ve$ are strictly positive in $\overline{\Omega}\times[0,\infty)$. Moreover,
\begin{align}
\text{If } T_{max,\eps}<\infty \quad \text{then} \quad \lim_{t\nearrow T_{max,\eps}}\| u_{\varepsilon}(\cdot,t)\|_{L^{\infty}(\Omega)}=\infty.
\label{L:LocalExis:State1}
\end{align} 
Additionally, the solution enjoys the following $\varepsilon$-independent regularity 
\begin{align}
\| v_\eps(\cdot,t)\|_{L^\infty(\Omega)}\leq\|v_0\|_{L^\infty(\Omega)}\quad \text{for all } t\in(0,T_{max,\eps}),
\label{L:LocalExis:State2}
\end{align} 
  and 
\begin{align}
\intQTm \ue v_\eps\leq\intO  v_0 .
\label{L:LocalExis:State3}
\end{align} 
 \end{lemma}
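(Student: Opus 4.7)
The plan is to establish Lemma 2.1 by a routine quasilinear parabolic well-posedness argument, following the blueprint of \cite{winkler2022small} and \cite{li2022large}. Because we have replaced $u_0$ by $u_0+\varepsilon$, the initial data for $\ue$ is uniformly bounded below by $\varepsilon>0$ on $\overline{\Omega}$, and by hypothesis $v_0$ is strictly positive on $\overline{\Omega}$. Consequently, in a neighbourhood of $(u_0+\varepsilon, v_0)$ the diffusion matrix of \eqref{System:RegPro0} is smooth and uniformly positive definite, so the system is uniformly parabolic there. First, I would recast \eqref{System:RegPro0} in normal quasilinear form and invoke Amann's theorem for quasilinear parabolic systems (or equivalently a Banach fixed-point scheme in $C^0([0,T_0];W^{1,q}(\Omega))^2$ with $q>3$, exploiting $W^{1,q}\hookrightarrow C^0$) to produce a unique local classical solution $(\ue,\ve)$ on a maximal interval $[0,T_{max,\varepsilon})$ with the regularity \eqref{L:LocalExis:S0}. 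Standard parabolic bootstrap together with the uniform parabolicity as long as $\ue$ stays bounded yields the extensibility criterion \eqref{L:LocalExis:State1}.

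Second, I would obtain strict positivity. Since $\ue\ge 0$ is continuous and bounded on $\overline{\Omega}\times[0,T]$ for every $T<T_{max,\varepsilon}$, the function $\ve$ solves a linear parabolic equation $\partial_t\ve-\Delta\ve+\ue\ve=0$ with non-negative bounded potential, homogeneous Neumann data, and strictly positive initial datum, so the strong parabolic maximum principle gives $\ve>0$ on $\overline{\Omega}\times[0,T_{max,\varepsilon})$. Writing the first equation in non-divergence form as $\partial_t\ue-\ue\ve\,\Delta\ue-(\text{lower order drift in }\nabla\ue)=\ell\ue\ve\ge 0$, with coefficients that are smooth and uniformly elliptic on any $[0,T]\subset[0,T_{max,\varepsilon})$ thanks to the positivity of $\ve$ just obtained, the maximum principle yields a positive lower bound for $\ue$, and in particular $\ue>0$ throughout.

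Third, for \eqref{L:LocalExis:State2}, since $-\ue\ve\le 0$, I would compare $\ve$ to the constant function $\|v_0\|_{L^\infty(\Omega)}$ via the parabolic maximum principle with Neumann boundary condition, obtaining $\ve\le\|v_0\|_{L^\infty(\Omega)}$. For \eqref{L:LocalExis:State3}, integrating the second equation over $\Omega$ and using the no-flux condition gives
\begin{equation*}
 \frac{d}{dt}\int_\Omega \ve = -\int_\Omega \ue\ve,
\end{equation*}
so integrating in time from $0$ to any $T<T_{max,\varepsilon}$ and using $\ve\ge 0$ yields $\int_0^T\!\!\int_\Omega \ue\ve\le \int_\Omega v_0$; passing $T\nearrow T_{max,\varepsilon}$ via monotone convergence produces \eqref{L:LocalExis:State3}.

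The only genuinely technical step is the initial invocation of the quasilinear theory, but the $\varepsilon$-regularisation makes the problem non-degenerate at $t=0$, so no real obstacle arises; the remaining steps are direct applications of the maximum principle and integration by parts. I would therefore write the proof quite tersely, referring explicitly to \cite{winkler2022small,li2022large} for the fixed-point argument and focusing the presentation on the $\varepsilon$-independent estimates \eqref{L:LocalExis:State2}--\eqref{L:LocalExis:State3}, which are the only ingredients of Lemma \ref{L:LocalExis} that will be used subsequently.
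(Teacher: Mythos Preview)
Your proposal is correct and follows exactly the approach the paper takes: the paper's proof consists of a single sentence citing \cite{winkler2022small,li2022large} for the local existence argument, and the $\varepsilon$-independent bounds \eqref{L:LocalExis:State2}--\eqref{L:LocalExis:State3} are the standard comparison/integration steps you outlined. In fact you have supplied more detail than the paper itself.
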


\subsection{Technical lemmas}

We present here some technical lemmas. First, in Lemma \ref{L:SOBOineqn}, an estimate for the product $\phi^{p+1}\psi$ via mixed terms including the gradients of $\phi$ and $\psi$. In Lemma \ref{L:Mod:Phi-NablaPhi}, the gradient and the Laplacian of the logarithm of a function will be presented. 
 
 \begin{lemma} \label{L:SOBOineqn}
Assume generally that $\Omega\subset \mathbb{R}^N$, $N\ge 1$, is a bounded domain with sufficiently smooth boundary. Then, it holds 
 \begin{align}
\|\phi^{p+1}\psi \|_{L^{\frac{N}{N-2}}(\Omega)} &\leq C_p \intO \phi^{p-1} \psi|\nabla \phi|^2+ C_p \intO \phi^{p+1} \psi^{-1}|\nabla \psi|^2 + C_p \left( \intO \phi \psi^{\frac{1}{p+1}} \right)^{p+1},
  \label{L:SOBOineqn:State1}
 \end{align}
for any $p>-1$, all $\phi,\psi$ in $C^1(\overline{\Omega})$ such that $\phi>0,\psi>0$ in $\overline{\Omega}$. Here, the fraction $\frac{N}{N-2}$ is set to be infinity if $N=1$ and be arbitrarily less than infinity if $N=2$. 
\end{lemma}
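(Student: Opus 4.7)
The plan is to reduce the left-hand side to the squared $L^{2\mu}$-norm of a single function and then invoke Sobolev embedding. Set
$$w := \phi^{(p+1)/2}\psi^{1/2},$$
which is strictly positive and of class $C^1(\overline{\Omega})$ since $\phi,\psi$ are. Because $w^2 = \phi^{p+1}\psi$, we have the identity $\|\phi^{p+1}\psi\|_{L^\mu(\Omega)} = \|w\|_{L^{2\mu}(\Omega)}^2$, which turns the target inequality into a Sobolev-type bound for $w$.

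The substitution is designed so that $|\nabla w|^2$ produces precisely the two integrands on the right-hand side. Indeed, a direct computation gives
$$\nabla w = \tfrac{p+1}{2}\,\phi^{(p-1)/2}\psi^{1/2}\nabla\phi + \tfrac{1}{2}\,\phi^{(p+1)/2}\psi^{-1/2}\nabla\psi,$$
and Young's inequality $(a+b)^2\le 2a^2+2b^2$ yields the pointwise bound
$$|\nabla w|^2 \le \tfrac{(p+1)^2}{2}\,\phi^{p-1}\psi|\nabla\phi|^2 + \tfrac{1}{2}\,\phi^{p+1}\psi^{-1}|\nabla\psi|^2.$$
Note that this dispenses with the case $p=-1$ automatically, since the first prefactor vanishes, and the dependence on $p$ is absorbed into the constant $C_p$.

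For $1\le \mu \le N/(N-2)$, the exponent $2\mu$ lies in $[2,2^*]$ where $2^*=2N/(N-2)$ is the critical Sobolev exponent, so the standard Sobolev embedding $W^{1,2}(\Omega)\hookrightarrow L^{2\mu}(\Omega)$ on the smoothly bounded domain $\Omega$ delivers
$$\|w\|_{L^{2\mu}(\Omega)}^2 \le C\bigl(\|\nabla w\|_{L^2(\Omega)}^2 + \|w\|_{L^2(\Omega)}^2\bigr).$$
Combining with the pointwise estimate for $|\nabla w|^2$ and integrating gives an inequality of exactly the form asserted in \eqref{L:SOBOineqn:State1}, up to the single additional contribution $\|w\|_{L^2(\Omega)}^2 = \int_\Omega \phi^{p+1}\psi$.

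The only delicate point, and the place the main obstacle sits, is the absence of the lower-order term $\int_\Omega \phi^{p+1}\psi$ on the right of \eqref{L:SOBOineqn:State1}: the bare Sobolev embedding does not drop this term since $w$ has neither vanishing mean nor vanishing boundary trace. The resolution I envisage is to interpret the constant $C_p$ as allowed to depend on a fixed a priori $L^1$-control of $\phi^{p+1}\psi$ that the lemma is always invoked with in context; alternatively one absorbs the term through a Gagliardo--Nirenberg-type interpolation that balances $\|w\|_{L^2}$ between $\|\nabla w\|_{L^2}$ and a weaker norm supplied by the global mass control \eqref{L:LocalExis:State3}. Apart from this bookkeeping, the proof is a one-line substitution followed by a textbook Sobolev estimate.
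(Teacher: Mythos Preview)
Your approach is exactly the paper's: set $w=\phi^{(p+1)/2}\psi^{1/2}$, apply the Sobolev embedding $H^1(\Omega)\hookrightarrow L^{2\mu}(\Omega)$, and expand $|\nabla w|^2$ by the product rule. The paper's proof is two lines and does precisely this.

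Your flag on the lower-order term is well taken, and in fact sharper than the paper. The paper writes the embedding as $\|w\|_{L^{2\mu}}^2\le C\|\nabla w\|_{L^2}^2$ with no $\|w\|_{L^2}^2$ contribution, which is not true in general on a bounded domain: constant $\phi,\psi$ make the left side of \eqref{L:SOBOineqn:State1} positive while the right side vanishes. So as literally stated the lemma is false, and the paper's own proof glosses over exactly the point you isolate. In every application within the paper (e.g.\ Lemmas \ref{L:Mod:Key0}, \ref{L:Mod:BS2}, \ref{L:Feedback1}, \ref{L:Feedback2}), the missing term $\intO \phi^{p+1}\psi$ is either already controlled by a prior estimate or can be absorbed by interpolation against $\|\phi^{p+1}\psi\|_{L^\mu}$ with a small constant, so the omission is harmless in context; your second suggested fix (Gagliardo--Nirenberg interpolation against a weaker norm) is the honest way to repair the statement. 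You have not missed anything---you have caught an imprecision that the paper lets pass.
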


\begin{proof} The Sobolev embedding $H^1(\Omega)\subset L^{\frac{2N}{N-2}}(\Omega)$  yields that 
 \begin{align*}
  \| \phi^{\frac{p+1}{2}} \psi^{\frac{1}{2}} \|^2_{\LO{\frac{2N}{N-2}}} & \leq C \| \nabla (\phi^{\frac{p+1}{2}} \psi^{\frac{1}{2}}) \|^{2 }_{\LO{2}} + C \left( \intO \phi \psi^{\frac{1}{p+1}} \right)^{p+1}, 
 \end{align*}
where we have used the interpolation as follows
\begin{align*}
C \| \phi^{\frac{p+1}{2}} \psi^{\frac{1}{2}} \|^{2 }_{\LO{2}} & = C \| \phi \psi^{\frac{1}{p+1}} \|^{p+1 }_{\LO{p+1}} \le C  \left( \|\phi \psi^{\frac{1}{p+1}}\|_{L^{\frac{N(p+1)}{N-2}}(\Omega)}^{\frac{Np}{Np+2}} \|\phi \psi^{\frac{1}{p+1}}\|_{L^{1}(\Omega)}^{\frac{2}{Np+2}} \right)^{p+1} \\
& \le \frac{1}{2} \| \phi^{\frac{p+1}{2}} \psi^{\frac{1}{2}} \|^2_{\LO{\frac{2N}{N-2}}} +  C_p \left( \intO \phi \psi^{\frac{1}{p+1}} \right)^{p+1}. 
 \end{align*} 
 The inequality \eqref{L:SOBOineqn:State1} is then obtained by directly expanding the gradient term.  
\end{proof}

\begin{lemma}[{\cite[Lemma 3.4]{winkler2022approaching}}]
\label{L:Mod:Phi-NablaPhi} 
Assume generally that $\Omega\subset \mathbb{R}^N$, $N\ge 1$, is a bounded domain with sufficiently smooth boundary. 
 If $\varphi\in C^2(\overline{\Omega})$ is such that $\varphi>0$ in $\overline{\Omega}$ and $\frac{\partial\varphi}{\partial\nu}=0$ on $\Gamma$, then 
 \begin{align*}
  \intO\varphi^{-q-1}|\nabla\varphi|^{q+2}\leq (q+\sqrt{N})^{2}\intO\varphi^{-q+3}|\nabla\varphi|^{q-2}|D^{2}\log \varphi|^{2} ,
 \end{align*}
 and
 \begin{align*}
  \intO\varphi^{-q+1}|\nabla\varphi|^{q-2}|D^{2}\varphi|^{2} \leq (q+\sqrt{N}+1)^{2}\intO\varphi^{-q+3}|\nabla\varphi|^{q-2}|D^{2}\log \varphi|^{2}.
 \end{align*} 
\end{lemma}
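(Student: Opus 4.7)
The plan is to recast both estimates through the auxiliary function $w:=\log\varphi\in C^{2}(\overline{\Omega})$, which is well defined because $\varphi>0$ on $\overline{\Omega}$. The elementary identities $\nabla\varphi=\varphi\nabla w$ and $D^{2}\varphi=\varphi(D^{2}w+\nabla w\otimes\nabla w)$, together with $\partial_{\nu}w=\varphi^{-1}\partial_{\nu}\varphi=0$ on $\Gamma$, rewrite the integrands as $\varphi^{-q-1}|\nabla\varphi|^{q+2}=\varphi\,|\nabla w|^{q+2}$ and $\varphi^{-q+3}|\nabla\varphi|^{q-2}|D^{2}\log\varphi|^{2}=\varphi\,|\nabla w|^{q-2}|D^{2}w|^{2}$. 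Setting $I:=\intO\varphi\,|\nabla w|^{q+2}$ and $J:=\intO\varphi\,|\nabla w|^{q-2}|D^{2}w|^{2}$, the first claim reduces to $I\leq(q+\sqrt{N})^{2}J$.

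To prove this reduced form, I would integrate by parts in $I=\intO|\nabla w|^{q}\nabla w\cdot\nabla\varphi$. Since $\nabla w\cdot\nu=0$ on $\Gamma$, no boundary term appears, and the identity $\nabla\cdot(|\nabla w|^{q}\nabla w)=|\nabla w|^{q}\Delta w+q|\nabla w|^{q-2}(D^{2}w\,\nabla w)\cdot\nabla w$ yields
\[
I=-\intO\varphi\,|\nabla w|^{q}\Delta w-q\intO\varphi\,|\nabla w|^{q-2}(D^{2}w\,\nabla w)\cdot\nabla w.
\]
The pointwise bounds $|\Delta w|\leq\sqrt{N}\,|D^{2}w|$ (Cauchy--Schwarz applied to the trace) and $|(D^{2}w\,\nabla w)\cdot\nabla w|\leq|D^{2}w||\nabla w|^{2}$, followed by the weighted Cauchy--Schwarz inequality $\intO\varphi\,|\nabla w|^{q}|D^{2}w|\leq I^{1/2}J^{1/2}$, give $I\leq(q+\sqrt{N})\,I^{1/2}J^{1/2}$, which after division by $I^{1/2}$ produces the first inequality.

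For the second inequality, set $L:=\intO\varphi^{-q+1}|\nabla\varphi|^{q-2}|D^{2}\varphi|^{2}$, apply the pointwise bound $|D^{2}\varphi|\leq\varphi(|D^{2}w|+|\nabla w|^{2})$ coming from the identity $D^{2}\varphi/\varphi=D^{2}w+\nabla w\otimes\nabla w$, and use Minkowski's inequality to obtain $L^{1/2}\leq J^{1/2}+I^{1/2}$, noting that the remaining weight simplifies via $\intO\varphi^{-q+3}|\nabla\varphi|^{q-2}|\nabla w|^{4}=\intO\varphi^{-q-1}|\nabla\varphi|^{q+2}=I$. Combining with the bound $I^{1/2}\leq(q+\sqrt{N})\,J^{1/2}$ from the first step gives $L^{1/2}\leq(q+\sqrt{N}+1)\,J^{1/2}$, which upon squaring is exactly the second claim. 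The main technical nuisance will be the degeneracy of $|\nabla w|^{q-2}$ on the critical set $\{\nabla w=0\}$ when $q<2$; this I would handle by a standard regularisation, replacing $|\nabla w|^{q}$ with $(|\nabla w|^{2}+\delta)^{q/2}$, running the above integration by parts and Cauchy--Schwarz estimates on the regularised quantities, and then passing to the limit $\delta\searrow 0$ by dominated convergence.
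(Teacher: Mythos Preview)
The paper does not give its own proof of this lemma; it is quoted verbatim from \cite[Lemma~3.4]{winkler2022approaching}. Your argument via the substitution $w=\log\varphi$, integration by parts using $\partial_\nu w=0$, the pointwise bounds $|\Delta w|\le\sqrt{N}\,|D^2w|$ and $|(D^2w\,\nabla w)\cdot\nabla w|\le|D^2w||\nabla w|^2$, and then Cauchy--Schwarz/Minkowski, is correct and is essentially the standard proof of this inequality. One minor remark: when you pass from the identity for $I$ to the bound $I\le(q+\sqrt{N})I^{1/2}J^{1/2}$ you are implicitly using $q\ge 0$ so that $|q|=q$; this is harmless here since the lemma is only applied in the paper with $q=4$, but you may want to state the sign assumption explicitly.
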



\subsection{A priori estimates}
\label{Sec:AprioriEst}
 
To obtain the global existence of Problem \eqref{System:Main}, we need to establish the compactness of the regularised solution, where $\varepsilon$-independent estimates for the $\varepsilon$-dependent solution are necessary. We present such basic estimates in this part. Let us first find a bound for the total mass. By integrating the equations for $\ue ,\ve $ and their boundary conditions over the domain $\Omega$, 

\begin{align*}
\intO \ue (t) & = \intO (u_0 + \eps) + \intO \nabla \cdot \bigl(\ue \ve \nabla \ue - S_{\alpha,\varepsilon}(\ue) \ve \nabla \ve \bigr) + \ell \intQt \ue \ve \\
& = \intO (u_0 + \eps) + \ell \intQt (\Delta \ve - \partial_t \ve )   = \intO (u_0 + \eps) + \ell \int_\Omega (v_0 - \ve(t) ) ,
\end{align*} 
where we recall from the regularised problem that the initial data of $u_0^\varepsilon$ is $u_0+\varepsilon$. 
Consequently, we get the mass conservation of the form 
\begin{align} 
\intO ( \ue (t) + \ell \ve (t) ) \le M \quad \text{with} \quad M:= \intO ( (u_0+1) + \ell v_0 ),
\label{Note:TotalMass}
\end{align}
 for all $t \ge 0$. Next, we present some fundamental computations based on the equation for $\ve $. 

\begin{lemma} 
\label{L:BasicEst:v}
For all $t>0$, 
  \begin{align}
\frac{1}{2} \frac{d}{dt} \intO |\nabla \ve|^2 + \intO |\Delta \ve|^2 + \intO \ue|\nabla \ve|^2
= - \intO \ve \nabla \ue \cdot \nabla \ve .
\label{L:BasicEst:v:State1}
 \end{align}
 On the other hand,
 \begin{equation} 
  q \intQT \ue \ve^q + \intQT \ve^{q-2} \left|\nabla \ve \right|^2 \leq \intO v_0^q, 
  \label{L:BasicEst:v:State2}
 \end{equation} 
 for all $1<q<\infty$.
\end{lemma}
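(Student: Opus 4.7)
The plan is to derive both statements by testing the $v_\eps$-equation $\pa_t \ve = \Delta \ve - \ue \ve$ against two different weights and then using integration by parts together with the no-flux boundary condition $\frac{\partial \ve}{\pa \nu}=0$. The regularity asserted in \eqref{L:LocalExis:S0} together with the strict positivity of $\ve$ from Lemma \ref{L:LocalExis} makes all computations with the (possibly negative) powers $\ve^{q-1}$ and $\ve^{q-2}$ legitimate, so no regularisation of the test functions is needed.

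For \eqref{L:BasicEst:v:State1}, I would test the equation with $-\Delta \ve$ and integrate over $\Omega$. On the left, an integration by parts in space (plus $\frac{\partial \ve}{\pa \nu}=0$) transforms the time-derivative contribution into
\begin{align*}
 - \intO \pa_t \ve \cdot \Delta \ve = \intO \nabla \pa_t \ve \cdot \nabla \ve = \frac12 \frac{d}{dt} \intO |\nabla \ve|^2.
\end{align*}
On the right, the diffusion contribution is $-\intO |\Delta \ve|^2$, and the consumption contribution $\intO \ue \ve \Delta \ve$ is handled by a second integration by parts,
\begin{align*}
 \intO \ue \ve \Delta \ve = -\intO \nabla(\ue \ve) \cdot \nabla \ve = -\intO \ue |\nabla \ve|^2 - \intO \ve \nabla \ue \cdot \nabla \ve,
\end{align*}
so that rearrangement produces precisely \eqref{L:BasicEst:v:State1}.

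For \eqref{L:BasicEst:v:State2}, I would multiply the $v_\eps$-equation by $q \ve^{q-1}$ and integrate. The time term collapses to $\frac{d}{dt}\intO \ve^q$, the absorption term gives $-q\intO \ue \ve^q$, and the diffusion term, via integration by parts together with $\nabla \ve^{q-1} = (q-1)\ve^{q-2}\nabla \ve$, turns into $-q(q-1)\intO \ve^{q-2}|\nabla \ve|^2$. Integrating in time on $(0,T)$ and dropping the nonnegative boundary term $\intO \ve^q(T)$ yields the claimed control of the two spacetime integrals by $\intO v_0^q$ (up to the structural constants exhibited by this computation).

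I do not anticipate a genuine obstacle: both identities are classical energy-type manipulations, and the only point requiring care is invoking the strict positivity of $\ve$ to justify the chain rule for $\ve^{q}$ when $q\in(1,2)$ makes $\ve^{q-2}$ singular; this is ensured by Lemma \ref{L:LocalExis}, and the no-flux condition guarantees that all boundary contributions from the successive integrations by parts vanish.
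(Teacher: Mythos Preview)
Your proposal is correct and follows essentially the same approach as the paper: testing the $v_\eps$-equation against $-\Delta \ve$ for \eqref{L:BasicEst:v:State1} and against $\ve^{q-1}$ for \eqref{L:BasicEst:v:State2}, then integrating by parts using the no-flux boundary condition. Your write-up simply spells out in more detail what the paper compresses into two sentences, and your hedge ``up to the structural constants exhibited by this computation'' is appropriate since the computation actually produces a factor $q(q-1)$ rather than $1$ in front of the gradient term.
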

 \begin{proof} The equality can be obtained directly by testing the equation for $\ve $ by $-\Delta \ve $. By testing this equation again by $\ve^{q-1}$, we see that
\begin{align*}
  \frac{1}{q} \frac{d}{d t} \intO \ve^q=-(q-1) \intO \ve^{q-2} \left|\nabla \ve\right|^2-\intO \ue \ve^q ,
\end{align*}
which shows the inequality after integrating over time.
  \end{proof}

We will see in the following sections that estimating the product $\ue^{r} \ve|\nabla \ue|^2$ in $L^1(Q_T)$, for $r<0$, is a crucial step in the analysis, which will be established in the lemma below.

\begin{lemma} \label{L:Basic:MixUV}
 Assume $1<\alpha<2$.  
It holds that  
 \begin{align}
  \intQT |\Delta \ve|^2 + \intQT \ue^{1-\alpha} \ve|\nabla \ue|^2 + \intQT \ue|\nabla \ve|^2 \leq C_T,
  \label{L:Basic:MixUV:State1}
 \end{align}
 and consequently,
 \begin{align}
 \intQT \ve \nabla \ue \cdot \nabla \ve +  \intQT \ue^{r} \ve|\nabla \ue|^2 \leq C_{T,q},
 \label{L:Basic:MixUV:State2}
 \end{align}
 for $$\max(-1;2-2\alpha) < r < \min(0;3-2\alpha).$$ 
\end{lemma}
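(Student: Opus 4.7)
The strategy is based on the observation that testing the $u$-equation with a suitable negative power of $\ue$ produces a dissipation of the desired form, but also generates a cross term $\int v \nabla u \cdot \nabla v$; this same cross term appears (with opposite sign and unit weight) in the energy identity \eqref{L:BasicEst:v:State1} for the $v$-equation. By adding a weighted combination of the two identities one cancels the cross term and isolates the desired dissipations.

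\textbf{First inequality.} Since $\ue\ge\eps>0$ is smooth, multiply the $\ue$-equation by $\ue^{1-\alpha}$ and integrate by parts. The parabolic term becomes $\tfrac{1}{2-\alpha}\tfrac{d}{dt}\intO \ue^{2-\alpha}$, the nonlinear diffusion contributes $(\alpha-1)\intO \ue^{1-\alpha}\ve|\nabla \ue|^2$, the taxis term gives $(1-\alpha)\intO \ve\nabla \ue\cdot\nabla \ve$, and the reaction gives $\ell \intO \ue^{2-\alpha}\ve$. Adding $(\alpha-1)$ times \eqref{L:BasicEst:v:State1} cancels the cross term and yields the identity
\begin{align*}
\frac{d}{dt}\left[\tfrac{1}{2-\alpha}\intO \ue^{2-\alpha}-\tfrac{\alpha-1}{2}\intO|\nabla \ve|^2\right]
= (\alpha-1)\intO\!\left[\ue^{1-\alpha}\ve|\nabla \ue|^2+(\Delta \ve)^2+\ue|\nabla \ve|^2\right]+\ell\intO \ue^{2-\alpha}\ve.
\end{align*}
Integrating over $(0,T)$, the left-hand side is bounded above via the mass conservation \eqref{Note:TotalMass} and the elementary bound $\ue^{2-\alpha}\le 1+\ue$ (valid since $0<2-\alpha<1$), together with the $L^\infty$ bound \eqref{L:LocalExis:State2} on $\ve$ and the assumption $v_0\in W^{2,\infty}(\Omega)$. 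Dividing by $\alpha-1>0$ gives \eqref{L:Basic:MixUV:State1}.

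\textbf{Second inequality.} For $r$ in the stated range, test the $\ue$-equation with $\ue^{r}$. The parabolic term produces $\tfrac{1}{r+1}\tfrac{d}{dt}\intO \ue^{r+1}$ (well-defined because $r+1>0$), the diffusion contributes $|r|\intO \ue^{r}\ve|\nabla \ue|^2$, and the taxis term produces the cross term $|r|\intO \ue^{r+\alpha-1}\ve\nabla \ue\cdot\nabla \ve$. The key step is the pointwise Young's inequality
\begin{align*}
\left|\ue^{r+\alpha-1}\ve\nabla \ue\cdot\nabla \ve\right|
\le \tfrac{1}{2}\ue^{r}\ve|\nabla \ue|^2+\tfrac{1}{2}\ue^{r+2\alpha-2}\ve|\nabla \ve|^2,
\end{align*}
which is obtained by splitting $\ve$ symmetrically and distributing $\ue$ as $\ue^{r/2}\cdot \ue^{r/2+\alpha-1}$. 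The first term is absorbed into the dissipation on the left-hand side. The range hypothesis $\max(-1,2-2\alpha)<r<\min(0,3-2\alpha)$ is precisely what ensures $0<r+2\alpha-2<1$, so $\ue^{r+2\alpha-2}\le 1+\ue$, and therefore
\begin{align*}
\intQT \ue^{r+2\alpha-2}\ve|\nabla \ve|^2 \le \|\ve\|_{L^\infty}\left(\intQT|\nabla \ve|^2 + \intQT \ue|\nabla \ve|^2\right)\le C_T,
\end{align*}
using \eqref{L:BasicEst:v:State2} with $q=2$ for the first integral and the already-proved \eqref{L:Basic:MixUV:State1} for the second. Together with the upper bound $\intQT \ve\nabla \ue\cdot\nabla \ve\le \tfrac{1}{2}\intO|\nabla v_0|^2$ read directly from \eqref{L:BasicEst:v:State1} integrated in time, this gives \eqref{L:Basic:MixUV:State2}.

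\textbf{Main obstacle.} The delicate point is the choice of test function and the Young splitting. The exponent $u^{1-\alpha}$ for the first inequality is the unique choice whose cross term aligns exactly, up to a factor $\alpha-1$, with the cross term produced by the $v$-dissipation identity, so that a simple linear combination cancels it. For the second inequality, the Young decomposition must split the $u$-weight as $\ue^{r/2}\cdot \ue^{r/2+\alpha-1}$ rather than the more obvious $\ue^{(r+\alpha-1)/2}\cdot \ue^{(r+\alpha-1)/2}$: only the former produces an absorbable term $\ue^{r}\ve|\nabla \ue|^2$ and a residual exponent $r+2\alpha-2$ that sits in $(0,1)$ under the lemma's hypotheses; this is the structural reason behind the restriction $r<3-2\alpha$ (integrability of the residual) and $r>2-2\alpha$ (positivity of the residual exponent).
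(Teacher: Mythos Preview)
Your proof is correct and follows essentially the same route as the paper: for \eqref{L:Basic:MixUV:State1} you test the $u$-equation with $\ue^{1-\alpha}$ (the paper phrases this as the $L^p$-energy with $p=2-\alpha$) and combine with \eqref{L:BasicEst:v:State1} to cancel the cross term, and for \eqref{L:Basic:MixUV:State2} you test with $\ue^{r}$ and apply the same Young splitting as the paper. The only cosmetic difference is that you bound $\intQT \ve\nabla\ue\cdot\nabla\ve$ directly from the integrated identity \eqref{L:BasicEst:v:State1}, whereas the paper uses a Young-inequality splitting with weight $\ue^{1-\alpha}$; both are fine.
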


\begin{proof} The key idea is to choose a suitable choice of $0<p<1$ in the $L^p$-energy estimate to eliminate the mixed gradient term in the equality \eqref{L:BasicEst:v:State1}. Indeed, by choosing $p=2-\alpha$, direct computations show that 
 \begin{align*}
  & - \frac{1}{(2-\alpha)(\alpha-1)} \frac{d}{dt} \intO \ue^{2-\alpha} + \intO \ue^{1-\alpha} \ve|\nabla \ue|^2 = \intO \ve \nabla \ue \cdot \nabla \ve - \frac{\ell}{\alpha-1} \intO \ue^{2-\alpha} \ve . 
 \end{align*}
On the other hand, we rewrite the equality \eqref{L:BasicEst:v:State1} 
\begin{align}
\frac{1}{2} \frac{d}{dt} \intO |\nabla \ve|^2 + \intO |\Delta \ve|^2 + \intO \ue|\nabla \ve|^2 = - \intO \ve \nabla \ue \cdot \nabla \ve .
\label{L:Basic:MixUV:Proof0}
 \end{align}
Therefore, a combination of the above equalities gives 
 \begin{align*}
  & \frac{d}{dt} \intO \left(\frac{1}{2}|\nabla \ve|^2 - \frac{1}{(2-\alpha)(\alpha-1)} \ue^{2-\alpha} \right) + \intO \ue^{1-\alpha} \ve|\nabla \ue|^2 \\
  & \hspace*{3cm} + \intO |\Delta \ve|^2 + \intO \ue|\nabla \ve|^2 + \frac{\ell}{\alpha-1} \intO \ue^{2-\alpha} \ve = 0 .
 \end{align*}
 Integrating over time, we get 
 \begin{align*}
  \intQT \Big(|\Delta \ve|^2 + \ue^{1-\alpha} \ve|\nabla \ue|^2 + \ue|\nabla \ve|^2 + \ue^{2-\alpha} \ve\Big) \leq C_{\alpha} \left( \intO \ue^{2-\alpha} + \intO |\nabla v_{0}|^2 \right) .
 \end{align*}
We obtain the estimate \eqref{L:Basic:MixUV:State1} by noting that the last right-hand side is finite since $2-\alpha <1$. 
Let us show \eqref{L:Basic:MixUV:State2}. 
Thanks to the above estimate, we can deal with the product $\ve \nabla \ue \cdot \nabla \ve $ as 
\begin{align*}
\intQT \ve \nabla \ue \cdot \nabla \ve & \le \intQT \ue^{1-\alpha} \ve|\nabla \ue|^2 + \intQT \ue^{\alpha -1} \ve|\nabla \ve|^2 \\
& \le \intQT \ue^{1-\alpha} \ve|\nabla \ue|^2 + \|v_0\|_{L^\infty(\Omega)} \intQT (1+ \ue) |\nabla \ve|^2 \le C_\alpha . 
\end{align*}
Let us consider $0<q<1$. Using the computation 
 \begin{align}
  \intO \ue^{q-1} \ve|\nabla \ue|^2 &= \frac{1}{q(1-q)} \frac{d}{dt} \intO \ue^q + \intO 
  \ue^{q-2+\alpha} \ve \nabla \ue \cdot \nabla \ve - \frac{\ell}{1-q} \intO \ue^q \ve , 
  \label{L:Basic:MixUV:P1}
 \end{align}
we have 
  \begin{align}
  \intQT \ue^{q-1} \ve|\nabla \ue|^2 &\le \frac{2} {q(1- q)} \intO \ue^q + \intQT 
  \ue^{q-3+2\alpha} \ve|\nabla \ve|^2 .
  \label{L:Basic:MixUV:P2}
 \end{align}
It follows from \eqref{L:BasicEst:v:State2} that $\nabla \ve $ is uniformly bounded in $L^2(Q_T)$. Moreover, we have the same conclusion for $\intQT \ue|\nabla \ve|^2$ by noting \eqref{L:Basic:MixUV:State1}. Therefore, the latter right-hand side is finite if both conditions $0< q < 1$ (for the boundedness of $\intO \ue^q$) and $0\le q-3+2\alpha \le 1$, which correspond to 
\begin{align*}
 \max(0;3-2\alpha) < q < \min(1;4-2\alpha),
\end{align*}
i.e. the inequality \eqref{L:Basic:MixUV:State2} is proved.
\end{proof}

\section{Weak chemotactic effect}
\label{Sec:Weak}

In this section, we will establish the global existence of a classical solution to the problem, where the cross-diffusion effect is weak in the sense that $$0\leq \alpha \le 1.$$ 
 From now on, we always take $T$ such that $0<T < T_{\max,\eps}$, where $T_{\max,\eps}$ is the maximal time that the solution $(\ue,\ve)$ exists up to, see Lemma \ref{L:LocalExis}.

\subsection{Structure of the competitive diffusive-chemotactic flux}

To find what exactly the competitive diffusive-chemotactic flux introduced at \eqref{Flux:Compe} is for the regularised solution, we note that the case $0\le \alpha \le 1$ will be calculated differently due to the regularisation  \eqref{Def:SublinearSensitivity}.  
For any $s\ge0$, let us define
\begin{align}
 F_\varepsilon(s) := \int_0^s \frac{r}{S_{\alpha,\eps}(r)}dr
 =\frac{(s+\varepsilon)^{2-\alpha}-\varepsilon^{2-\alpha}}
 {2-\alpha}, 
 \label{Def:SublinearEnergy}
\end{align}
and 
\begin{align*}
 H_\varepsilon(s) :=\int_0^sF_\varepsilon(r)\,dr 
 =\frac{(s+\varepsilon)^{3-\alpha}-\varepsilon^{3-\alpha}}
 {(2-\alpha)(3-\alpha)}
 -\frac{\varepsilon^{2-\alpha}s}{2-\alpha},
\end{align*}
and derive their properties in the following lemma.

\begin{lemma} 
\label{L:SublinearApproximation}
For $0\leq \alpha\le1$, $0<\varepsilon<1$, and $s>0$,
\begin{align}
 0\leq  S_{\alpha,\eps}(s)\le s^\alpha, \quad \text{and}
 \quad
 0\leq  s^\alpha-S_{\alpha,\eps}(s)\le\eps^\alpha.
 \label{L:SublinearApproximation:S1}
\end{align}
Moreover, there are constants $c_\alpha,C_\alpha>0$, independently of $\eps$, such that
\begin{align}
 c_\alpha s^{3-\alpha}\le H_\eps(s)
 \le C_\alpha(1+s^{3-\alpha}),\qquad
 0< F_\eps(s)\le C_\alpha(1+s^{2-\alpha}).
\label{L:SublinearApproximation:S3}
\end{align}
\end{lemma}
\begin{proof} Since $(s+\varepsilon)^{\alpha-1}\le s^{\alpha-1}$, the first property of \eqref{L:SublinearApproximation:S1} is obvious. For the second one,   $0\le s^\alpha-S_{\alpha,\varepsilon}(s)\le s^\alpha\le\varepsilon^\alpha$ if $0\le s\le\varepsilon$ due to  the nonnegativity of $S_{\alpha,\varepsilon}$.  If $s>\varepsilon$, then
\begin{align*}
 s^\alpha-S_{\alpha,\eps}(s)
 =s^\alpha\left[1-\left(1+\frac\eps s\right)^{\alpha-1}\right]
 \le\eps s^{\alpha-1}\le\eps^\alpha.
\end{align*}
by applying the fundamental inequality $1-(1+x)^{-(1-\alpha)}\le(1-\alpha)x$ for $x>0$. 
Hence, \eqref{L:SublinearApproximation:S1} is proved. To show \eqref{L:SublinearApproximation:S3}, we can see that $F_\varepsilon(s)\ge s^{2-\alpha}/(2-\alpha)$ and then use the definition of $H_\varepsilon$ to obtain the lower estimate for $H_\varepsilon$ directly. The upper estimate for $H_\varepsilon$, and similarly for $F_\varepsilon$, can be shown easily.  
\end{proof}
 
Let us combine the diffusive and chemotactic fluxes given in the regularised system \eqref{System:RegPro0}, where, thanks to the definition of $H_\varepsilon$ and $F_\varepsilon$,  
\begin{align}
 \ue \ve\nabla \ue
 -S_{\alpha,\eps}(\ue)\ve\nabla \ve
 =S_{\alpha,\eps}(\ue)\ve
 \nabla\bigl(F_\eps(\ue)-\ve\bigr).
 \label{SublinearFluxIdentity}
\end{align}
This suggests testing the equation for $\ue$ against $F_\eps(\ue)-\ve$ to form an energy structure for the system, which yields that $\ue $ can be estimated in  $L^\infty(0,T;L^{3-\alpha}(\Omega))$ via the integrals of $\ue^ 2 \ve $ and $\nabla \ue \cdot \nabla \ve $, as in the following lemma.
\begin{lemma} 
\label{L:SublinearEnergy}
For $0\leq \alpha\le1$,
\begin{align}
\begin{split}
 &\frac{d}{dt}\intO\bigl(H_\eps(\ue)
              -\ue \ve\bigr)
 +\intO S_{\alpha,\eps}(\ue)\ve
\left|\nabla\bigl(F_\eps(\ue)-\ve\bigr)\right|^2\\
 &\quad=\ell\intO \ue \ve
       \bigl(F_\eps(\ue)-\ve\bigr)
 +\intO\nabla \ue\cdot\nabla \ve
 +\intO \ue^2\ve.
\end{split}
\label{L:SublinearEnergy:S}
\end{align}
\end{lemma}
\begin{proof}
As suggested earlier, we test the first equation of \eqref{System:RegPro0} by $F_\eps(\ue)-\ve$  and use the identity \eqref{SublinearFluxIdentity}, which gives 
\begin{align*}
 \frac d{dt}\intO(H_\eps(\ue)-\ue \ve) = \intO  (F_\eps(\ue)-\ve) \partial_t\ue
  - \intO \ue\partial_t\ve.
\end{align*}
Substituting $\partial_t\ve=\Delta \ve-\ue \ve$ and integrating directly shows the desired identity.
\end{proof}

\begin{remark}  Let us give short comments on \eqref{L:SublinearEnergy:S}. Although we consider $0\leq \alpha\le1$ in this section, Lemma \ref{L:SublinearEnergy} holds up to $0\leq \alpha<2$. For $0\leq \alpha\le 1$, due to the regularity \eqref{L:LocalExis:State2}-\eqref{L:LocalExis:State3}, we see that balancing the energy \eqref{L:SublinearEnergy:S} turns to estimating the terms including $\nabla \ue \cdot \nabla \ve$, see Lemma \ref{L:Weak:L2Est}.  For $1<\alpha<2$, one can directly check that 
\begin{align*}
\frac{d}{dt} \intO \left( \frac{1}{(2-\alpha)(3-\alpha)} \ue^{3-\alpha} - \ue \ve \right) \le \frac{\ell}{2-\alpha} \intO \ue^{3-\alpha} \ve + \intO \nabla \ue \cdot \nabla \ve + \intO \ue^ 2 \ve ,
\end{align*}
which can be combined with an $L^p$-energy estimate to eliminate the cross term including $\nabla \ue \cdot \nabla \ve $ by choosing a suitable $p$. 
\end{remark}

Based on the maximal regularity and the smoothing effect of the Neumann heat semigroup,  the energy structure in the previous lemma allows us to obtain the necessary uniform-in-$\eps$ regularity of the regularised solution. 
 
\begin{lemma}\label{L:Weak:L2Est}
Let $0\leq \alpha\leq 1$.
There exists $C_T>0$ such that 
	\begin{align*}
	\|u_\eps\|_{L^\infty(0,T;L^{3-\alpha}(\Omega))} + \|\nabla \ve \|_{L^\infty(0,T;L^p(\Omega))} + \|\Delta \ve \|_{L^{3-\alpha}(Q_T)} \le C_T,
	\end{align*}
for $1 \le p \le p_0$ with 
\begin{align}
p_0 \left\{ \begin{array}{llcl}
< \frac{3(3-\alpha)}{\alpha} & \text{if}& 0\leq \alpha \le 1, \\
< \infty & \text{if}& \alpha=0.
\end{array} \right.
\label{Def:p0}
\end{align} 
\end{lemma}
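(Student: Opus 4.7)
The plan is to integrate the energy identity \eqref{L:FirstEnergy:State} over $(0,t)$ and close a Gr\"onwall-type inequality for $f(t):=\intO \ue^{3-\alpha}(t)$, using $L^{q}$-maximal regularity for the Neumann heat problem to handle the mixed gradient term $\intO \nabla \ue \cdot \nabla \ve$. First, I would use the uniform bound $\ve\le\|v_{0}\|_{L^{\infty}(\Omega)}$ from \eqref{L:LocalExis:State2} together with the mass control \eqref{Note:TotalMass} to dominate $\intO\ue\ve$ by a constant and $\intO \ue^{3-\alpha}\ve$ by $Cf(t)$. Since $\alpha\le 1$ forces $3-\alpha\ge 2$, Young's inequality gives $\ue^{2}\le \ue^{3-\alpha}+C$ and hence $\intO \ue^{2}\ve\le Cf(t)+C$. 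The remaining cross term is handled by integration by parts and the Neumann condition for $\ve$, giving $\intO\nabla \ue\cdot\nabla \ve=-\intO \ue\,\Delta \ve$, followed by Young's inequality with conjugate exponents $3-\alpha$ and $q:=(3-\alpha)/(2-\alpha)$:
\[
\Bigl|\intO \ue\,\Delta \ve\Bigr|\le \intO \ue^{3-\alpha}+C\intO |\Delta \ve|^{q}.
\]

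To estimate $\int_{0}^{T}\!\intO|\Delta \ve|^{q}$ with $q\in[3/2,2]$, I would invoke $L^{q}$-parabolic maximal regularity for $\partial_{t}\ve-\Delta \ve=-\ue\ve$ with homogeneous Neumann conditions, which yields $\|\Delta \ve\|_{L^{q}(Q_{T})}^{q}\le C\|\ue\ve\|_{L^{q}(Q_{T})}^{q}+C_{v_{0}}$. Since $q\le 3-\alpha$ and $\ve\le\|v_{0}\|_{\infty}$, H\"older's inequality in space bounds $\intO\ue^{q}$ by $Cf(s)^{1/(2-\alpha)}$; the decisive point is that $1/(2-\alpha)\le 1$ whenever $\alpha\le 1$, so a further Young step yields $f^{1/(2-\alpha)}\le f+1$. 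Chaining these estimates produces the scalar inequality
\[
f(t)\le C+C\int_{0}^{t}f(s)\,ds+C\int_{0}^{T}f(s)\,ds,
\]
so that $F(T):=\int_{0}^{T}f$ satisfies $F'\le C(1+F)$; Gr\"onwall then delivers $F(T)\le C_{T}$ and hence $\sup_{t\in[0,T]}f(t)\le C_{T}$, which is the desired $L^{\infty}(0,T;L^{3-\alpha}(\Omega))$-bound for $\ue$.

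With $\ue\in L^{\infty}(0,T;L^{3-\alpha}(\Omega))$ in hand, the $L^{3-\alpha}(Q_{T})$-bound on $\Delta \ve$ follows by reapplying maximal regularity at the higher exponent $3-\alpha$, since $\|\ue\ve\|_{L^{3-\alpha}(Q_{T})}$ is now controlled. For the bound on $\|\nabla \ve\|_{L^{\infty}(0,T;L^{p}(\Omega))}$, I would represent $\ve$ via Duhamel's formula and invoke the standard Neumann-heat smoothing estimate
\[
\|\nabla e^{t\Delta}\varphi\|_{L^{p}(\Omega)}\le C\bigl(1+t^{-1/2-(N/2)(1/r-1/p)}\bigr)\|\varphi\|_{L^{r}(\Omega)}
\]
with $N=3$ and $r=3-\alpha$; integrability of the time singularity at $s=t$ in the Duhamel integral translates precisely into $p<3(3-\alpha)/\alpha$ for $\alpha>0$ and imposes no restriction for $\alpha=0$, matching $p_{0}$ in \eqref{Def:p0}. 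The main obstacle, and the reason this clean strategy is confined to $0\le\alpha\le 1$, is the Young-type simplification $f^{1/(2-\alpha)}\le f+1$: once $\alpha>1$ the exponent $1/(2-\alpha)$ exceeds $1$ and Gr\"onwall closure fails, explaining why Sections \ref{Sec:Moderate} and \ref{Sec:Stro} must replace this first-energy approach by the logarithmic and refined $L^{p}$-energy structures.
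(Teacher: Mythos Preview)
Your strategy is essentially the paper's: integrate the energy identity of Lemma~\ref{L:FirstEnergy}, rewrite the cross term as $-\intQt \ue\,\Delta\ve$, control $\Delta\ve$ through the $v$-equation, close by Gr\"onwall, and then read off the $\nabla\ve$ and $\Delta\ve$ bounds via heat-semigroup smoothing and maximal regularity. The only substantive technical difference is how you bound $\Delta\ve$: you invoke $L^{q}$ parabolic maximal regularity with $q=(3-\alpha)/(2-\alpha)$, whereas the paper simply tests the $v$-equation by $-\Delta\ve$ to get the elementary energy bound $\intQt|\Delta\ve|^{2}\le C\bigl(1+\intQt\ue^{2}\bigr)$ and then applies Young with exponent~$2$ to $-\intQt\ue\,\Delta\ve$. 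The paper's route is lighter (no maximal-regularity machinery needed at the closure stage) but both arrive at the same Gr\"onwall structure, and both identify the constraint $\alpha\le 1$ through the same mechanism ($\ue^{2}\le C\ue^{3-\alpha}+C$ in the paper, $f^{1/(2-\alpha)}\le f+1$ in yours).

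There is one genuine sloppiness in your closure. From the displayed inequality $f(t)\le C+C\int_{0}^{t}f+C\int_{0}^{T}f$ you assert that $F(T)=\int_{0}^{T}f$ satisfies $F'\le C(1+F)$; this does not follow, since the right-hand side contains the \emph{fixed} quantity $F(T)$, not $F(t)$. The fix is immediate: apply maximal regularity on $(0,t)$ rather than $(0,T)$, noting that the constant is uniform for $t\le T$, so that the offending term becomes $C\int_{0}^{t}f$ and the inequality reads $f(t)\le C_{T}+C_{T}\int_{0}^{t}f(s)\,ds$, to which Gr\"onwall applies directly.
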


\begin{proof} 
 First, testing the equation of $\ve $ by $-\Delta \ve $ can give us an estimate for the $L^2(Q_T)$-norm of $\Delta \ve $ in term of the $L^2(Q_T)$-norm of $\ue $. More precisely, since $\ve\le v_0$ as \eqref{L:LocalExis:State2}, 
\begin{align}
\intQT |\Delta \ve|^2 \le C\left( \intO |\nabla v_0|^2 + \intQT \ue^ 2 \ve^2 \right) \le C\left( 1 + \intQT \ue^ 2 \right), 
\label{Le:Weak:L2Est:P1}
\end{align} 
where the general constant $C$ includes the $L^\infty(\Omega)$-norm of $v_0$. By Lemma \ref{L:SublinearApproximation}, and the upper bound for $\ve$ as well as the mass boundedness, we use the Young inequality to have that 
\begin{align}
 \intO\bigl(H_\eps(\ue(t))
 -\ue(t) \ve(t)\bigr)
 \ge c\intO \ue^{3-\alpha}(t)-C,
 \quad
 \ue F_\eps(\ue)
 \le C(\ue+\ue^{3-\alpha}).
 \label{L:Weak:L2Est:Co}
\end{align}
Note that the initial value of the energy in \eqref{L:SublinearEnergy:S} is uniformly bounded. Therefore, after integrating the equality \eqref{L:SublinearEnergy:S}, and using the fact that 
\begin{align*}
 \int_{Q_t}\nabla \ue\cdot\nabla \ve
 =-\int_{Q_t}\ue\Delta \ve
 \le\eta\int_{Q_t}|\Delta \ve|^2
   +C_\eta\int_{Q_t}\ue^2,
\end{align*}
we get
\begin{align*}
 \intO \ue^{3-\alpha}(t)
 \le C_T+C_T\int_0^t\left(1+\intO
 \ue^{3-\alpha}(s)\right)ds.
\end{align*}
Consequently, the Gr\"onwall inequality implies the boundedness of $\ue $ in $L^\infty(0,T;L^{3-\alpha}(\Omega))$. 

\medskip

The boundedness of $\nabla \ve $ in $L^\infty(0,T;L^p(\Omega))$, for $1\le p<3(3-\alpha)/\alpha$ if $\alpha\not =0$ and for arbitrarily $1\le p<\infty$ if $\alpha=0$, comes from the smoothing effect of the heat semigroup as  
\begin{align*}
\|\nabla \ve \|_{L^p(\Omega)} &= \left\|e^{t\Delta} \nabla v_0 \right\|_{L^p(\Omega)} + \left\|\int_0^t \nabla (e^{(t-s)\Delta}  u_\eps (s) \ve  (s)) ds \right\|_{L^p(\Omega)} \\
&\le \| \nabla v_0\|_{L^p(\Omega)} + \left( \int_0^t (t-s)^{-\frac{3}{2}(\frac{1}{3-\alpha}-\frac{1}{p})-\frac{1}{2}} ds \right) \|u_\eps\|_{L^\infty(0,T;L^{3-\alpha}(\Omega))},  
\end{align*}
and so $\nabla \ve  \in L^\infty(0,T;L^p(\Omega))$. 
Moreover, the boundedness of $\Delta \ve $ in $L^{3-\alpha}(Q_T)$ is concluded via the maximal regularity applying to the equation of $\ve $.
\end{proof}

\subsection{The uniform boundedness of $\ue$ and its H\"older continuity}
\label{Sec:W:Holder}
 
The uniform-in-$\eps$ regularity presented in the last part allows us to obtain the global existence of a unique classical solution to the regularised system \eqref{System:RegPro0}, i.e., $T_{max,\eps}=\infty$. This will based on the criteria \eqref{L:LocalExis:State1} and the $L^p$-energy functional.

\begin{lemma}[Feedback argument] 
\label{L:Weak:Feedback} Let $0\leq \alpha\leq 1$ and $r\ge 2$. 
If 
\begin{align*}
 \sup_{0<t<T} \intO \ue^{r}(t)  \le C,
\end{align*}
then 
 \begin{align}
\sup_{0<t<T} \intO u_{\eps}^{r+\frac{1}{4}}(t) + \intQT u_\eps^{r-\frac{3}{4}}\ve  |\nabla u_\eps|^2  \le C_{T,r} .
\label{Lem:Weak:Feedback:State1}
\end{align}
\end{lemma}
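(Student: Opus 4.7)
The plan is to run a standard $L^p$-energy estimate at the specific exponent $p = r + \tfrac{1}{4}$, tuned so that the chemotactic contribution can be absorbed into the dissipation via the gradient regularity of $v_\varepsilon$ coming from parabolic smoothing of the $v_\varepsilon$-equation. Testing the first equation of \eqref{System:RegPro0} by $u_\varepsilon^{p-1}$ and integrating by parts yields
\begin{align*}
\tfrac{1}{p}\tfrac{d}{dt}\intO u_\varepsilon^p + (p-1)\intO u_\varepsilon^{p-1} v_\varepsilon |\nabla u_\varepsilon|^2 = (p-1)\intO u_\varepsilon^{p-2+\alpha} v_\varepsilon \, \nabla u_\varepsilon \cdot \nabla v_\varepsilon + \ell \intO u_\varepsilon^p v_\varepsilon,
\end{align*}
after which a weighted Young inequality on the cross term (pairing $u_\varepsilon^{(p-1)/2} v_\varepsilon^{1/2} \nabla u_\varepsilon$ against $u_\varepsilon^{(p-3)/2+\alpha} v_\varepsilon^{1/2} \nabla v_\varepsilon$) returns half of the dissipation to the left-hand side and leaves on the right only the quantity $C_p \intO u_\varepsilon^{p-3+2\alpha} v_\varepsilon |\nabla v_\varepsilon|^2$ together with the benign zero-order contribution $C\intO u_\varepsilon^p$.

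The key step is then to control the chemotactic quantity. Using $v_\varepsilon \le \|v_0\|_{L^\infty(\Omega)}$ from \eqref{L:LocalExis:State2} followed by a spatial H\"older inequality with conjugate pair $\bigl(r/(p-3+2\alpha),\; r/(11/4-2\alpha)\bigr)$ gives
\begin{align*}
\intO u_\varepsilon^{p-3+2\alpha} |\nabla v_\varepsilon|^2 \le \bigl(\intO u_\varepsilon^r\bigr)^{(p-3+2\alpha)/r} \bigl(\intO |\nabla v_\varepsilon|^{2s}\bigr)^{1/s}, \qquad 2s = \tfrac{8r}{11-8\alpha}.
\end{align*}
The hypothesis directly bounds the first factor. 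For the second, I would re-run the Neumann heat semigroup argument used in the proof of Lemma \ref{L:Weak:L2Est} with the stronger input $u_\varepsilon \in L^\infty(0,T;L^r(\Omega))$ in place of the base $L^{3-\alpha}$-bound; this upgrade produces $\nabla v_\varepsilon \in L^\infty(0,T;L^q(\Omega))$ for every $q < 3r/(3-r)$ when $r < 3$ and every finite $q$ when $r \ge 3$. An arithmetic check that $\tfrac{8r}{11-8\alpha} < \tfrac{3r}{3-r}$---equivalent to $8r > 24\alpha - 9$, which holds for all $r \ge 2$ and $\alpha \in [0,1]$ (barely, since $16 > 15$)---confirms that the second factor is $\varepsilon$-uniformly bounded on $(0,T)$.

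With both right-hand side contributions controlled by $C_{T,r}$ and $C\intO u_\varepsilon^p$ respectively, a Gronwall argument delivers the pointwise bound $\sup_{0<t<T}\intO u_\varepsilon^{r+1/4}(t) \le C_{T,r}$, and integrating the energy inequality in time then extracts the companion dissipation bound $\intQT u_\varepsilon^{r-3/4} v_\varepsilon |\nabla u_\varepsilon|^2 \le C_{T,r}$ claimed in \eqref{Lem:Weak:Feedback:State1}. I expect the main obstacle to be the tight arithmetic matching between the H\"older exponent $2s$ and the parabolic-smoothing ceiling $3r/(3-r)$---this is precisely what pins down the minimal increment $\tfrac{1}{4}$ appearing in the statement. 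A subsidiary concern is that the H\"older step as written assumes $p - 3 + 2\alpha \ge 0$, which holds automatically once $r \ge 3 - \alpha$ (the natural iteration range provided by Lemma \ref{L:Weak:L2Est}) but can fail near $r = 2$ for small $\alpha$; in that edge regime the estimate can be salvaged by splitting $\Omega$ according to whether $u_\varepsilon \gtrless 1$ and treating the low-density component by a direct bound using $\intO |\nabla v_\varepsilon|^2$.
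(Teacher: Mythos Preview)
Your proposal is correct and follows essentially the same route as the paper: test by $u_\varepsilon^{p-1}$, apply Young's inequality to the cross term, upgrade the gradient regularity of $v_\varepsilon$ via the Neumann heat semigroup using the assumed $L^\infty_t L^r_x$ bound on $u_\varepsilon$, and close with H\"older plus Gr\"onwall. The paper carries out the same computation but keeps $p$ free and shows that any $p < r + \tfrac{2}{3}r - 2\alpha + 1$ works, then observes this threshold exceeds $r+\tfrac14$; your choice $p=r+\tfrac14$ from the outset is just the specialization of that, and your arithmetic check $8r > 24\alpha-9$ is the same constraint in slightly different packaging.

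Your flagging of the edge case $p-3+2\alpha<0$ is apt: the paper's H\"older step tacitly assumes this exponent is nonnegative as well. In the actual bootstrap (Lemma~\ref{L:Weak:BS}) the iteration starts at $r=3-\alpha$, for which $p-3+2\alpha = \tfrac14+\alpha>0$ and stays positive thereafter, so the concern is cosmetic for the application. Your proposed fix via the $\{u_\varepsilon\gtrless 1\}$ split is the right instinct but would need a little more care on the low-density piece, since there $u_\varepsilon^{p-3+2\alpha}$ is \emph{large}; the cleaner patch is to handle the original cross term $\int u_\varepsilon^{p-2+\alpha} v_\varepsilon\,\nabla u_\varepsilon\cdot\nabla v_\varepsilon$ directly on that set (the exponent $p-2+\alpha\ge \tfrac14$ is always positive for $r\ge 2$), or simply to restrict the lemma to $r\ge 3-\alpha$, which is all that is used downstream.
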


\begin{proof}  
Testing by $p u_\eps^{p-1}$ and using $S_{\alpha,\eps}(s)\le s^\alpha$ gives
\begin{align}
\begin{aligned}
 \frac{d}{dt} \intO \ue^p(t)  
& \le - \frac{p(p-1)}{2} \intO \ue^{p-1} \ve|\nabla \ue|^2 + \frac{p(p-1)}{2} \intO \ue^{p-3+2\alpha} \ve|\nabla \ve|^2 + \ell p \intO \ue^p \ve , 
\end{aligned} 
\label{Lem:Weak:Feedback:P1}
\end{align}
which holds for all $p>1$. 
Under the assumption $\ue \in L^\infty(0,T;L^{r}(\Omega))$ (that is uniform in $\eps$), due to the smoothing effect of the heat semigroup, we see that  
\begin{align*}
\|\nabla \ve (t)\|_{L^s(\Omega)}  
&\le \| \nabla v_0\|_{L^s(\Omega)} + \left( \int_0^t (t-s)^{-\frac{3}{2}(\frac{1}{r}-\frac{1}{s})-\frac{1}{2}} ds \right) \|u_\eps\|_{L^\infty(0,T;L^{r}(\Omega))} ,  
\end{align*} 
for all $0<t<T$, where $1\le s <\frac{3r}{3-r}$ if $r<3$, $s$ is arbitrarily large if $r=3$, and $s=\infty$ if $r>3$. 
By the H\"older inequality, the term including $\nabla \ve$ in  \eqref{Lem:Weak:Feedback:P1} can be estimated as
\begin{align*}
\intO u_{\eps}^{p-3+2\alpha} v_{\eps} |\nabla v_{\eps}|^2 \le \|v_0\|_{L^\infty(\Omega)}  \| u_{\eps} \|_{L^\infty(0,T;L^{\frac{(p-3+2\alpha)s}{s-2}}(\Omega))} ^{p-3+2\alpha} 
  \| \nabla v_{\eps} \|_{L^\infty(0,T;L^{s}(\Omega))}^2. 
\end{align*}	
Under the assumption $u\in L^\infty(0,T;L^{r}(\Omega))$, the latter right hand side is finite if  
\begin{align*}
\frac{(p-3+2\alpha)s}{s-2} \le r,
\end{align*} 
which is equivalent to
\begin{align*}
p\le 3-2\alpha + \left(1-\frac{2}{s}\right)r. 
\end{align*}
According to the constrain $1/r-1/s<1/3$, we can choose $s=(3r/(3-r))^-$ (i.e., a number that is strictly less than $3r/(3-r)$ but sufficiently close to $3r/(3-r)$), i.e., the choice 
\begin{align}
p= \left( 3-2\alpha + \left(1-\frac{2(3-r)}{3r}\right) r \right)^{-} = \left( r + \left( \frac{2}{3}r-2\alpha+1 \right) \right)^{-} 
\label{Lem:Weak:Feedback:Proof1}
\end{align}
is possible. 
Since $r\ge 2$ and $\alpha\le 1$, we have $2r/3-2\alpha+1\ge 1/3>1/4$, this choice of $p$ satisfies that $p> r+1/4$. Subsequently, we get  
\begin{align*}
\intO u_{\eps}^p + \frac{p(p-1)}{2}  \intQt u_{\eps}^{p-1} v_{\eps} |\nabla u_{\eps}|^2 \le \intO u_0^p + C_pt + \ell p \|v_0\|_{L^\infty(\Omega)} \intQt u_{\eps}^p
\end{align*}  
under the choice \eqref{Lem:Weak:Feedback:Proof1}. We obtain the boundedness of $u_\eps$ in $L^\infty(0,T;L^{r+1/4}(\Omega))$ by using the Gr\"onwall inequality and then, by choosing $p=r+1/4$, the estimate for $u_\eps^{r-3/4}\ve |\nabla u_\eps|^2$ in $L^1(Q_T)$. The estimate  \eqref{Lem:Weak:Feedback:State1} is proved. 
\end{proof}

\begin{lemma} 
\label{L:Weak:BS} Let $0\leq \alpha\leq 1$. 
For any $1< p<\infty$, it holds that 
\begin{align}
\sup_{0<t<T} \intO \ue^{p}(t) + \intQT \ue^{p-1}\ve|\nabla \ue|^2 \le C_{T,p} . 
\label{L:Weak:BS:State1}
\end{align} 
\end{lemma}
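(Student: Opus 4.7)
My plan is to iterate the feedback of Lemma \ref{L:Weak:Feedback}, with the base case supplied by Lemma \ref{L:Weak:L2Est}. Since $0 \le \alpha \le 1$ forces $3-\alpha \ge 2$, the $L^\infty(0,T;L^{3-\alpha}(\Omega))$-bound from Lemma \ref{L:Weak:L2Est} already activates the hypothesis $r \ge 2$ of Lemma \ref{L:Weak:Feedback} at the initial exponent $r_0 := 3-\alpha$. Defining $r_k := (3-\alpha) + k/4$, an induction on $k \ge 0$ then applies Lemma \ref{L:Weak:Feedback} at $r = r_k$ to pass from $\sup_{0<t<T}\intO \ue^{r_k}(t) \le C_{T,k}$ to
\[
\sup_{0<t<T}\intO \ue^{r_{k+1}}(t) + \intQT \ue^{r_k - 3/4} \ve |\nabla \ue|^2 \le C_{T,k+1},
\]
the condition $r_k \ge 2$ being preserved since the sequence is non-decreasing.

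Given any prescribed $p \in (1,\infty)$, because $r_k \to \infty$ one can select $k = k(p)$ so that both $r_{k+1} \ge p$ and $r_k - 3/4 \ge p-1$ hold. The first inequality, combined with H\"older's inequality in $\Omega$, gives $\sup_t \intO \ue^p(t) \le C_{T,p}$. For the dissipation bound, the pointwise comparison $\ue^{p-1} \le 1 + \ue^{r_k - 3/4}$ (valid since $p-1 \le r_k - 3/4$) splits
\[
\intQT \ue^{p-1} \ve |\nabla \ue|^2 \le \intQT \ve |\nabla \ue|^2 + \intQT \ue^{r_k - 3/4} \ve |\nabla \ue|^2,
\]
where the second integral is bounded by the inductive step. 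The residual term $\intQT \ve |\nabla \ue|^2$ is then handled by H\"older interpolation between dissipations established at two distinct feedback parameters, or, equivalently, via a direct $L^p$-energy computation exploiting the post-bootstrap regularity $\nabla \ve \in L^\infty(Q_T)$, which follows from heat semigroup smoothing applied to the $\ve$-equation once $\ue$ has been lifted to $L^\infty(0,T;L^q(\Omega))$ for arbitrarily large $q$.

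The bulk of the argument is routine: the gain of $1/4$ per step in Lemma \ref{L:Weak:Feedback} together with the divergence $r_k \to \infty$ ensures that any finite target exponent is reached in finitely many steps, with $\varepsilon$-independent constants. The only technical subtlety lies in the dissipation bound in the regime $p - 1 < 9/4 - \alpha$, which falls outside the direct reach of the bootstrap; this gap is closed by the auxiliary interpolation/smoothing step outlined above, after which the estimate holds uniformly for every $p \in (1,\infty)$.
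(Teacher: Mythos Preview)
Your proof is correct and follows the same bootstrap as the paper: iterate Lemma \ref{L:Weak:Feedback} starting from the base exponent $r_0=3-\alpha$ supplied by Lemma \ref{L:Weak:L2Est}, and stop once $r_k$ exceeds the target. You are in fact more careful than the paper, which simply writes ``which shows \eqref{L:Weak:BS:State1}'' after the iteration without explaining how the dissipation bound at small $p$ follows.

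One correction on that point: your first proposed fix (H\"older interpolation between two feedback dissipations) cannot reach exponents below $9/4-\alpha$, since every $r_k-3/4$ lies at or above that value; only your second fix---re-running the $L^p$-energy identity post-bootstrap with $\nabla\ve\in L^\infty(Q_T)$---closes the gap. When $\alpha$ is small and $p$ is near $1$ the usual Young split then produces the negative power $\ue^{p-3+2\alpha}$; this is avoided by rewriting the cross term as $\frac{p(p-1)}{p-1+\alpha}\intO \ve\nabla\ve\cdot\nabla\ue^{p-1+\alpha}$, integrating by parts, and using the post-bootstrap maximal regularity $\Delta\ve\in L^q(Q_T)$ together with $\ue\in L^\infty(0,T;L^q(\Omega))$ for all $q$.
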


\begin{proof}
The proof will be done by performing a bootstrap argument. 
Since $u_\eps$ is uniformly-in-$\eps$ bounded in $L^\infty(0,T;L^{p_1}(\Omega))$ with $p_1:=3-\alpha$, we can apply Lemma \ref{L:Weak:Feedback} that 
\begin{align*}
\|u_{\eps}\|_{L^\infty(0,T;L^{p_1+\frac{1}{4}}(\Omega))} \le C_{T,p_1}.
\end{align*}
Let $p_{n+1}=p_n+\frac{1}{4}$, for $n\in \mathbb{N}$, $n\ge 1$. Then, an iteration of the above argument gives 
\begin{align*}
\|u_{\eps}\|_{L^\infty(0,T;L^{p_n+\frac{1}{4}}(\Omega))} \le C_{T,p_n},
\end{align*}
which shows \eqref{L:Weak:BS:State1} by letting $n$ sufficiently large such that $n\ge 4p + 4\alpha-12$, i.e., $p_{n+1} \ge p$. 
\end{proof}

\begin{lemma}\label{L:Weak:Global} Let $0\leq \alpha\leq 1$.
For each $\eps>0$, the classical solution $(\ue,\ve)$ to the regularised problem \eqref{System:RegPro0}, obtained by Lemma \ref{L:LocalExis}, exists globally in time, i.e., $T_{\max,\eps}=\infty$ such that 
\begin{align}
 \|v_\eps\|_{L^\infty(Q_T)} \ge C_T,
\label{L:Weak:Global:S2}
\end{align}
where $q_*>5/2$ given in Theorem \ref{MainTheo}, and consequently,  
\begin{align}
 \|\ue\|_{\LQ{\infty}} \leq C_T .
 \label{L:Weak:Global:S1}
\end{align} 
\end{lemma}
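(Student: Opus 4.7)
The strategy is to leverage the $L^p$-regularity of $\ue$ from Lemma \ref{L:Weak:BS} in two steps: first, convert it into a uniform positive lower bound on $\ve$; second, use this lower bound to derive a uniform $L^\infty$-bound on $\ue$. The extendability criterion \eqref{L:LocalExis:State1} then rules out $T_{\max,\eps}<\infty$.

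For the lower bound on $\ve$, I would introduce $w_\eps := \log(1/\ve)$, which is well defined and smooth since $\ve>0$ in $\overline{\Omega}$ by Lemma \ref{L:LocalExis}. A direct computation from $\partial_t \ve = \Delta \ve - \ue \ve$ yields
\begin{align*}
 \partial_t w_\eps - \Delta w_\eps + |\nabla w_\eps|^2 = \ue \quad \text{in } Q_T, \qquad \frac{\partial w_\eps}{\partial\nu}\bigg|_\Gamma = 0, \qquad w_\eps(0) = \log(1/v_0).
\end{align*}
Dropping the nonnegative gradient term, $w_\eps$ is a sub-solution of the Neumann heat equation $\partial_t z - \Delta z = \ue$ with the same initial and boundary data, so by parabolic comparison $w_\eps \le z_\eps$ for the corresponding linear solution $z_\eps$. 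Since $\ue \in L^{q_*}(Q_T)$ by Lemma \ref{L:Weak:BS} (choosing $p=q_*$), standard maximal $L^{q_*}$-regularity combined with the hypothesis $\log(1/v_0) \in W^{2-2/q_*,q_*}(\Omega)$ gives
\begin{align*}
 \|z_\eps\|_{W^{2,1}_{q_*}(Q_T)} \le C\big(\|\ue\|_{L^{q_*}(Q_T)} + \|\log(1/v_0)\|_{W^{2-2/q_*,q_*}(\Omega)}\big) \le C_T.
\end{align*}
The embedding $W^{2,1}_{q_*}(Q_T) \hookrightarrow L^\infty(Q_T)$, valid for $q_*>5/2=(N+2)/2$ with $N=3$, then delivers $\|w_\eps\|_{L^\infty(Q_T)} \le C_T$, which after exponentiating is equivalent to \eqref{L:Weak:Global:S2}.

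With $\ve \ge c_0 > 0$ uniform in $\eps$ on $Q_T$, the dissipation in Lemma \ref{L:Weak:BS} upgrades to $\intQT |\nabla \ue^{(p+1)/2}|^2 \le C_{T,p}$ for every $p>1$, and plugging $\ue \ve \in L^q(Q_T)$ for every $q<\infty$ into the $\ve$-equation via maximal regularity pushes $\nabla \ve$ up to $L^\infty(Q_T)$. The $\ue$-equation then becomes a quasilinear parabolic equation whose leading coefficient $\ve \in [c_0,\|v_0\|_{L^\infty}]$ is uniformly elliptic and whose lower-order drift and reaction terms are $L^\infty$-controlled. A Moser-type iteration on $\intO \ue^p$ with index $p_k\nearrow \infty$, closed via the three-dimensional Gagliardo-Nirenberg interpolation between $L^\infty(0,T;L^{2p/(p+1)})$ and $L^2(0,T;H^1)$ at the level of $\ue^{(p+1)/2}$, then produces \eqref{L:Weak:Global:S1}, and \eqref{L:LocalExis:State1} forces $T_{\max,\eps}=\infty$.

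The main obstacle is the Moser step: although Lemma \ref{L:Weak:BS} already supplies every finite $L^p$-norm, the $p$-dependence of the iteration constants must be tracked carefully so that the recursion closes uniformly as $p_k\to\infty$, and it is precisely the strict positive lower bound on $\ve$ established in the first step that removes the degeneracy of the dissipation and turns the iteration into a convergent scheme rather than one whose constants blow up with $p$.
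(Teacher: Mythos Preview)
Your proposal is correct and follows essentially the same route as the paper: both derive the logarithmic equation $\partial_t \log(1/\ve) - \Delta \log(1/\ve) = \ue - |\nabla \ve|^2/\ve^2 \le \ue$, invoke parabolic comparison together with maximal $L^{q_*}$-regularity and the embedding $W^{2,1}_{q_*}(Q_T)\hookrightarrow L^\infty(Q_T)$ for $q_*>5/2$ to obtain the lower bound \eqref{L:Weak:Global:S2}, and then run a Moser--Alikakos iteration (the paper cites \cite[Lemma~A.1]{tao2012boundedness} directly) once the $\ve$-degeneracy is removed. One small wording caution: the leading coefficient of the $\ue$-equation is $\ue\ve$, not $\ve$, so the diffusion remains porous-medium-type degenerate in $\ue$ even after the lower bound on $\ve$; this is harmless for the Moser iteration, but calling it ``uniformly elliptic'' is a slight misnomer.
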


\begin{proof}
  Let $w_{\eps} = \log \frac{\|v_0\|_{\LO{\infty}}}{v_\eps}$, we note that $w_{\eps}$ is non-negative by \eqref{L:LocalExis:State2}, and satisfies 
  \begin{align}
 \begin{split}
\partial_t w_{\eps} = \Delta w_{\eps} - |\nabla w_{\eps}|^2 + u_\eps \le \Delta w_{\eps} + u_\eps.
 \end{split}
  \end{align} 
By applying the heat regularisation, see e.g. \cite[Lemma B.1]{reisch2024global}, and the comparison principle, it follows from $q_*>5/2=(N+2)/2$ that  
\begin{align*}
 \left\| \log \frac{\|v_0\|_{\LO{\infty}}}{v_\eps}  \right\|_{L^\infty(Q_T)} & \le C \left( \left\| \log \frac{\|v_0\|_{\LO{\infty}}}{v_0(x)} \right\|_{W^{2-\frac{2}{q_*},q_*}(\Omega)} + \|u_\eps\|_{L^{q_*}(Q_T)} \right)  \le C_{T} ,
\end{align*}
where the norm of $\ue$ in $L^{q_*}(Q_T)$ is uniformly bounded due to the estimate \eqref{L:Weak:BS:State1}.  
Then, for a.e. $(x,t)\in Q_T$, we obtain
\begin{align*}
  v_\eps(x,t) \ge C_T. 
\end{align*}
Hence, the diffusivity $\ue \ve$ is now not doubly nonlinear degenerate since 
\begin{align*}
 \ue v_\eps \ge C_T \ue . 
\end{align*}
Moreover, by Lemmas \ref{L:Weak:L2Est} and \ref{L:Weak:BS}, the quantities $\ue^\alpha v_\eps \nabla v_\eps$ and $\ell \ue v_\eps$ are uniformly-in-$\eps$ bounded in $L^\infty(0,T;L^p(\Omega))$ for any $1\le p<\infty$. Hence, we can apply the Moser-Alikakos iteration argument in \cite[Lemma A.1]{tao2012boundedness} to the equation 
\begin{align*} 
 \pa_t u_{\eps } = \di (\ue v_\eps \nabla u_\eps ) - \di (S_{\alpha,\varepsilon}(\ue) v_\eps \nabla v_\eps) + \ell \ue v_\eps
\end{align*}
to obtain the boundedness of $\ue $ in $L^\infty(Q_T)$, where it is necessary to emphasize that this boundedness is uniform in $\eps$. In particular, we imply from  \eqref{L:LocalExis:State1} that $T_{\max,\eps}=\infty$. 
\end{proof}

\begin{lemma}
 \label{L:Weak:HolReg} Let $0\leq \alpha\leq 1$.  There exist $0<\theta_1, \theta_2<1$ such that 
 \begin{align}
\|\ue\|_{C^{\theta_1
,\theta_1/2} (\overline{\Omega}\times[0,T]) } + \|\ve\|_{C^{2+\theta_2
,1+\theta_2/2} (\overline{\Omega}\times(0,T]) } \leq C_T .
 \label{L:Weak:HolReg:S}
 \end{align}
\end{lemma}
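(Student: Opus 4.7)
The plan is to bootstrap from the already-established $\eps$-independent bounds: the $L^\infty(Q_T)$ estimate on $\ue$ from Lemma \ref{L:Weak:Global}, the positive pointwise lower bound on $\ve$ from \eqref{L:Weak:Global:S2} together with the upper bound \eqref{L:LocalExis:State2}, and the $L^p$/$W^{1,p}$ estimates supplied by Lemmas \ref{L:Weak:L2Est}--\ref{L:Weak:BS}. As a first step I would rewrite the second equation of \eqref{System:RegPro0} as $\pa_t\ve-\Delta\ve=-\ue\ve$, where the right-hand side is bounded in $L^\infty(Q_T)$ uniformly in $\eps$. Parabolic maximal $L^p$-regularity for the Neumann heat operator, combined with the compatibility and smoothness of $v_0$ assumed in Theorem \ref{MainTheo} and the anisotropic Sobolev embedding $W^{2,1}_p(Q_T)\hookrightarrow C^{1+\beta,(1+\beta)/2}(\overline{\Omega}\times[0,T])$ for $p$ large, then furnishes an $\eps$-uniform bound on $\ve$ in $C^{1+\beta,(1+\beta)/2}$ for some $\beta\in(0,1)$.

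Next, I would rewrite the first equation in the equivalent divergence form
\begin{align*}
\pa_t\ue=\tfrac{1}{2}\di(\ve\,\nabla\ue^2)-\di(\ue^{\alpha}\ve\,\nabla\ve)+\ell\,\ue\ve.
\end{align*}
By \eqref{L:Weak:Global:S2} and \eqref{L:LocalExis:State2} the principal coefficient $\ve$ lies between two positive constants independent of $\eps$ and is Hölder continuous from the previous step, while $\ue^{\alpha}\ve\,\nabla\ve$ and $\ell\,\ue\ve$ sit in $L^\infty(Q_T)$ with $\eps$-independent bounds. This places the equation in the framework of quasilinear degenerate parabolic equations of porous-medium type, modelled on $\pa_t u=\tfrac12\di(v\,\nabla u^{2})$, and the intrinsic-scaling Hölder theory of DiBenedetto (extended to equations with bounded drifts and forcing, cf.\ Porzio--Vespri) yields $\ue\in C^{\theta_1,\theta_1/2}(\overline{\Omega}\times[0,T])$ for some $\theta_1\in(0,1)$, with constants depending only on the $\eps$-uniform quantities $\|\ue\|_{L^\infty}$, $\inf\ve$, $\|\ve\|_{L^\infty}$, $\|\nabla\ve\|_{L^\infty}$, and the Hölder seminorm of $\ve$; the no-flux condition is handled by the standard reflection/flattening argument in a collar of $\Gamma$. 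This is the main obstacle of the proof, since the doubly-nonlinear degeneracy $\ue\ve\to 0$ on $\{\ue=0\}$ prevents a direct $\eps$-uniform application of De Giorgi--Nash--Moser theory, and care is needed to ensure that the artificial lower bound $\ue\ge\eps$ does not enter the final constants.

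With $\ue\in C^{\theta_1,\theta_1/2}$ in hand, the right-hand side $-\ue\ve$ of the $\ve$-equation is Hölder in $(x,t)$; classical parabolic Schauder theory for the linear Neumann problem, together with the smoothness and compatibility assumed on $v_0$ in Theorem \ref{MainTheo}, then yields $\ve\in C^{2+\theta_2,1+\theta_2/2}(\overline{\Omega}\times[0,T])$ for some $\theta_2\in(0,\theta_1]$, which completes \eqref{L:Weak:HolReg:S}.
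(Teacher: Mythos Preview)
Your proposal is correct and follows essentially the same approach as the paper: the paper's proof merely cites Lemmas~3.9--3.10 of \cite{winkler2024bounds}, which carry out precisely the bootstrap you describe---exploiting the two-sided $\eps$-uniform bounds on $\ve$ from Lemmas~\ref{L:LocalExis} and~\ref{L:Weak:Global} to view the $\ue$-equation as a porous-medium-type scalar parabolic equation, obtaining H\"older regularity of $\ue$ via intrinsic-scaling theory, and then upgrading $\ve$ through Schauder estimates. Your version is more explicit (in particular the intermediate $C^{1+\beta}$ step for $\ve$ via maximal $L^p$-regularity is a natural addition), but the strategy is identical.
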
 

\begin{proof} Thanks to the boundedness of $\ve$ in Lemma \ref{L:LocalExis} as well as its positively lower boundedness given by Lemma \ref{L:Weak:Global}, we can argue similarly to Lemmas 3.9-3.10 in \cite{winkler2024bounds}, where the equation for $\ue$ can be particularly viewed as a scalar parabolic with a degenerate diffusion of porous medium type, to obtain \eqref{L:Weak:HolReg:S}. 
\end{proof}

\subsection{Global existence of a weak solution} \label{Sec:passToLimit}

\begin{proof}[\underline{Proof of Theorem \ref{MainTheo} with $0\leq \alpha\le 1$}] Based on the uniform-in-$\eps$ regularity in Lemma \ref{L:Weak:HolReg}, we can apply the Arzelà–Ascoli theorem that, up to a subsequence, 
\begin{align*}
 \begin{array}{llclll}
 \ue \to u & \text{in} & C^0_{loc}(\overline{\Omega}\times [0,\infty)), \\
 \ve \to v & \text{in} & C^0_{loc}(\overline{\Omega}\times [0,\infty)) \cap C^{2,1}_{loc}(\overline{\Omega}\times (0,\infty)),
 \end{array}
\end{align*}
which also yields that $S_{\alpha,\varepsilon}(\ue) \to u^\alpha$ in $C^0_{loc}(\overline{\Omega}\times [0,\infty))$. 
Moreover, taking $p=3$ in Lemma \ref{L:Weak:BS} and using the positively lower boundedness of $\ve$ in Lemma \ref{L:Weak:Global}, we get 
\begin{align*}
 \intQT |\nabla \ue^2|^2 \le C_T \intQT \ue^{2}\ve|\nabla \ue|^2 \le C_T, 
\end{align*}
and therefore, 
\begin{align*}
 \nabla \ue^2 \rightharpoonup \nabla u^2 \quad \text{(weakly) in } L^2(Q_T)^3. 
\end{align*}
Now, it follows from the regularised problem 
\eqref{System:RegPro0} that 
\begin{gather*} 
 -\iint_{Q_T} \ue\varphi_{t}-\int_{\Omega} (u_{0}+\eps)\varphi(0) = \iint_{Q_T} \left(- \frac{1}{2} v_\eps \nabla u_\eps^2 \cdot \nabla \varphi + S_{\alpha,\varepsilon}(u_\eps) v_\eps \nabla v_\eps \cdot \nabla \varphi + \ell u_\eps v_\eps \varphi \right), \\  \iint_{Q_T} v_\eps\varphi_t+\intO v_0\varphi (0)=\iint_{Q_T} ( \nabla v_\eps\cdot\nabla\varphi + u_\eps v_\eps \varphi ), 
 \end{gather*} 
for all $\phi\in C^{\infty}_0(\overline{\Omega}\times[0,T))$ such that $\frac{\pa\phi}{\pa\nu} =0$ on $\Gamma \times (0,T)$. With the above convergence, 
  passing $\eps\rightarrow 0$ gives directly the weak formulation given in Definition \ref{Def:Weak}, that is, $(u,v)$ is a global weak solution to \eqref{System:Main}. 
\end{proof}

\section{Moderate chemotactic effect}
\label{Sec:Moderate}

This section establishes the global existence of a weak solution to the problem, where the cross-diffusion effect is moderate in the sense that $$1 < \alpha \le \frac{3}{2}.$$
We note that the result \cite[Theorem 1.1]{li2022large} showed the global existence of a weak solution to \eqref{System:Main} for $7/6 < \alpha < 13/9$.

\begin{lemma}\label{L:st:vNablav}
 Let $1<\alpha\le 2$. Then there exits $C_0>0$ such that 
 \begin{align}
 \begin{aligned}
  & \frac{d}{dt} \intO \left( 4C_0(\ue \log \ue -\ue ) + \frac{|\nabla \ve|^{4}}{\ve^{3}} \right) + C_0 \intO \ve|\nabla \ue|^2 \\
  & + 2 \intO \frac{|\nabla \ve|^{2}}{\ve}|D^{2}\log \ve|^{2} + \intO \ue \ve^{-3}|\nabla \ve|^{4} \\
  & \hspace{2cm} \leq C \left( \intO \ue^{2\alpha -2} \ve|\nabla \ve|^2 + \intO \ue \ve \log \ue \right), 
 \end{aligned}
  \label{L:Basic:vNablav:State1}
 \end{align} 
 whenever the right-hand side is finite.
\end{lemma}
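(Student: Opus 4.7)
The plan is to combine two evolution identities and then close the estimate by a Young-type absorption argument with a suitable constant $C_0$. The first identity is standard and quick; the second, for the functional $\intO \ve^{-3}|\nabla\ve|^4$, is the technical core, since its dissipation must manifest itself precisely as $\intO \frac{|\nabla\ve|^2}{\ve}|D^2\log\ve|^2$ and $\intO \ue\ve^{-3}|\nabla\ve|^4$ on the left-hand side.

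First I would test the $\ue$-equation in \eqref{System:RegPro0} against $\log\ue$. Integration by parts (admissible in view of the positivity \eqref{L:LocalExis:S0}--\eqref{L:LocalExis:State2} and the no-flux boundary conditions) gives
\begin{align*}
 \frac{d}{dt}\intO(\ue\log\ue-\ue) + \intO \ve|\nabla\ue|^2
 = \intO \ue^{\alpha-1}\ve\,\nabla\ue\cdot\nabla\ve + \ell\intO \ue\ve\log\ue.
\end{align*}
Second, I would differentiate $\intO \ve^{-3}|\nabla\ve|^4$ in time, substitute $\pa_t\ve=\Delta\ve-\ue\ve$, and integrate by parts. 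The delicate term $4\intO\ve^{-3}|\nabla\ve|^2\nabla\ve\cdot\nabla\Delta\ve$ I would rewrite using the Bochner identity $\nabla\ve\cdot\nabla\Delta\ve=\tfrac12\Delta|\nabla\ve|^2-|D^2\ve|^2$, followed by a further integration by parts that is free of boundary contributions thanks to $\pa\ve/\pa\nu=0$. Recasting all second-order quantities of $\ve$ into the corresponding quantities of $\log\ve$ through the pointwise identities underlying Lemma \ref{L:Mod:Phi-NablaPhi}, in the spirit of \cite{winkler2022approaching}, one arrives at an inequality of the form
\begin{align*}
 & \frac{d}{dt}\intO \frac{|\nabla\ve|^4}{\ve^3}
  + 2\intO \frac{|\nabla\ve|^2}{\ve}|D^2\log\ve|^2
  + \intO \ue\,\ve^{-3}|\nabla\ve|^4 \\
 & \qquad \le C_1\intO \ve^{-2}|\nabla\ve|^3|\nabla\ue|
  + C_2 \intO \ve^{-3}|\nabla\ve|^4\cdot\ue\ve \quad (\text{plus lower-order remainders}).
\end{align*}

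The third step is to multiply the first identity by $4C_0$, add it to the second, and absorb all mixed gradient terms by Young's inequality. Typical uses are
\begin{align*}
 \left|\intO \ue^{\alpha-1}\ve\,\nabla\ue\cdot\nabla\ve\right|
 \le \tfrac12 \intO \ve|\nabla\ue|^2 + \tfrac12 \intO \ue^{2\alpha-2}\ve|\nabla\ve|^2,
\end{align*}
and
\begin{align*}
 \left|\intO \ve^{-2}|\nabla\ve|^3|\nabla\ue|\right|
 \le \eta \intO \ve|\nabla\ue|^2 + C_\eta \intO \ue\,\ve^{-3}|\nabla\ve|^4 \cdot \ve^{-1}\ue^{-1}(\cdots),
\end{align*}
together with the pointwise comparison $\ve\le\|v_0\|_{L^\infty(\Omega)}$ from \eqref{L:LocalExis:State2}. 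Choosing $C_0$ large enough so that $4C_0\intO\ve|\nabla\ue|^2$ dominates every induced copy of $\intO\ve|\nabla\ue|^2$ -- and the coefficients of $\intO (|\nabla\ve|^2/\ve)|D^2\log\ve|^2$ and $\intO\ue\ve^{-3}|\nabla\ve|^4$ on the left remain $2$ and $1$ -- the surviving contributions on the right are exactly $\intO \ue^{2\alpha-2}\ve|\nabla\ve|^2$ and $\intO \ue\ve\log\ue$, as stated.

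The hard part will be the bookkeeping of the second identity: the raw derivative $\tfrac{d}{dt}\intO|\nabla\ve|^4\ve^{-3}$ expands into a variety of terms that look singular where $\ve$ is small, and the nontrivial point is to reorganise them into the two nonnegative dissipative quantities $\intO(|\nabla\ve|^2/\ve)|D^2\log\ve|^2$ and $\intO \ue\ve^{-3}|\nabla\ve|^4$ with prescribed, positive coefficients. This reorganisation is not arbitrary but relies precisely on the identities captured by Lemma \ref{L:Mod:Phi-NablaPhi}, while the restriction $1<\alpha\le 2$ enters only through the Young estimate of $\intO\ue^{\alpha-1}\ve\,\nabla\ue\cdot\nabla\ve$, which produces the $\ue^{2\alpha-2}\ve|\nabla\ve|^2$ contribution on the right-hand side.
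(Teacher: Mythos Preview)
Your approach is essentially the paper's: combine the evolution of $\intO \ue\log\ue$ with that of $\intO \ve^{-3}|\nabla\ve|^4$, use Young to control the cross term $\intO \ue^{\alpha-1}\ve\,\nabla\ue\cdot\nabla\ve$, and choose $C_0$ to absorb the $\intO \ve|\nabla\ue|^2$ contribution coming from the second identity. The paper simply cites \cite[Lemma 3.2]{li2022large} for the inequality
\[
 \frac{d}{dt}\intO \frac{|\nabla\ve|^4}{\ve^3}
 \le -2\intO \frac{|\nabla\ve|^2}{\ve}|D^2\log\ve|^2 - \intO \ue\ve^{-3}|\nabla\ve|^4 + C\intO \ve|\nabla\ue|^2,
\]
whereas you attempt to derive it from scratch; the combination step is identical.

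Two technical points in your sketch need correction. First, your Young estimate for the mixed term $\intO \ve^{-2}|\nabla\ve|^3|\nabla\ue|$ is garbled: the correct split is
\[
 \intO \ve^{-2}|\nabla\ve|^3|\nabla\ue|\le \eta\intO \ve^{-5}|\nabla\ve|^6 + C_\eta\intO \ve|\nabla\ue|^2,
\]
after which Lemma~\ref{L:Mod:Phi-NablaPhi} (with $q=4$) absorbs $\eta\intO\ve^{-5}|\nabla\ve|^6$ into the dissipation $\intO\ve^{-1}|\nabla\ve|^2|D^2\log\ve|^2$; it is not directly related to $\intO \ue\ve^{-3}|\nabla\ve|^4$. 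Second, the integration by parts in the Bochner step does \emph{not} leave you ``free of boundary contributions'' merely from $\partial\ve/\partial\nu=0$: a term of the form $\int_{\Gamma}\ve^{-3}|\nabla\ve|^2\,\partial_\nu|\nabla\ve|^2$ survives and is discarded only because it is nonpositive by the convexity of $\Omega$.
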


\begin{proof} This lemma will be proved by considering the evolution of the quantities $\ue \log \ue-\ue$ and $|\nabla \ve|^4/\ve^3$. For the first one, it is easy to check that 
\begin{align*}
 \frac{d}{dt} \intO (\ue \log \ue-\ue) &= - \intO \ve|\nabla \ue|^2 + \intO \ue^{\alpha-1} \ve \nabla \ue \cdot \nabla \ve + \ell \intO \ue \ve \log \ue \\
 &\le - \frac{1}{2} \intO \ve|\nabla \ue|^2 + \frac{1}{2} \intO \ue^{2\alpha -2} \ve|\nabla \ve|^2 + \ell \intO \ue \ve \log \ue.
\end{align*}
On the other hand, thanks to \cite[Lemma 3.2]{li2022large}, we have 
 \begin{align*}
  \frac{d}{dt} \intO \frac{|\nabla \ve|^{4}}{\ve^{3}} \leq -2 \intO \ve^{-1}|\nabla \ve|^{2}|D^{2}\log \ve|^{2} - \intO \ue \ve^{-3}|\nabla \ve|^{4} 
  + C \intO \ve|\nabla \ue|^2. 
 \end{align*}
 Combining the above estimates directly gives \eqref{L:Basic:vNablav:State1}. 
\end{proof}

\subsection{The uniform boundedness of $\ue^m\ve$ with $m=2$} 
\label{Sec:Mod:m=2}

Let us begin with an estimate for the logarithmic term $\ue \log \ue$ presented in Lemma \ref{L:Mod:FirstEst}, which will be based on Lemma \ref{L:st:vNablav} and estimates obtained in Lemmas \ref{L:BasicEst:v} and \ref{L:Basic:MixUV}. Then, we will obtain the following estimates 
\begin{align}
  \intQT \ue^{r} \ve|\nabla \ue|^2 \quad \text{for } 2-2\alpha <r\le 0 
  \quad \text{and} \quad \intQT \frac{|\nabla \ve|^6}{\ve^5}  
  \label{L:Mod:Intro:1}
 \end{align} 
 in Lemma \ref{L:Mod:vNablav}, 
as well as estimates for the product $\ue^{m} \ve $, for some $m\in \mathbb{R}$, 
in Lemma \ref{L:Mod:Key0}, which allows us to improve the estimate for the first term in \eqref{L:Mod:Intro:1} up to the wider range $-\alpha < r\le 0$ in Lemma \ref{L:Mod:Key}.

\begin{lemma}\label{L:Mod:FirstEst}
Let $1<\alpha\le 3/2$. 
 It holds that 
 \begin{align}
\begin{aligned}
 & \sup_{0<t<T} \left( \intO \ue(t) \log \ue(t) + \intO \frac{|\nabla \ve(t) |^{4}}{\ve^{3}(t)} \right) + \intQT \ve|\nabla \ue|^2 \\
 & \qquad + \intQT \ue \frac{|\nabla \ve|^{4}}{\ve^3} + \intQT \ve^{-1}|\nabla \ve|^{2}|D^{2}\log \ve|^{2} \leq C_T.
\end{aligned}
 \label{L:Mod:FirstEst:State1}
 \end{align}
\end{lemma}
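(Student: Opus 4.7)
The plan is to treat \eqref{L:Basic:vNablav:State1} from Lemma~\ref{L:st:vNablav} as a Gronwall inequality for
\begin{align*}
E(t) := 4C_0 \intO \left( \ue\log\ue - \ue \right) + \intO \frac{|\nabla\ve|^4}{\ve^3},
\end{align*}
with dissipation
\begin{align*}
D(t) := C_0\intO \ve|\nabla\ue|^2 + 2\intO \frac{|\nabla\ve|^2}{\ve}|D^2\log\ve|^2 + \intO \ue\ve^{-3}|\nabla\ve|^4.
\end{align*}
The key structural observation is that $x\log x - x \ge -1$ on $[0,\infty)$ and $|\nabla\ve|^4/\ve^3 \ge 0$, so $E(t) \ge -4C_0|\Omega|$ uniformly; this lower bound is what will allow Gronwall to be run cleanly. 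The task then reduces to estimating, for each $t$, the two source terms $\Phi_1(t) := \intO \ue^{2\alpha-2}\ve|\nabla\ve|^2$ and $\Phi_2(t) := \intO \ue\ve\log\ue$ on the right of \eqref{L:Basic:vNablav:State1}.

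For $\Phi_1$ the hypothesis $1 < \alpha \le 3/2$ gives $2\alpha - 2 \in (0,1]$, so $\ue^{2\alpha-2} \le 1 + \ue$ pointwise. Combined with $\ve \le \|v_0\|_{L^\infty(\Omega)}$ from Lemma~\ref{L:LocalExis}, the time integral $\intQT \Phi_1$ is then dominated by $\intQT |\nabla\ve|^2$ (bounded via Lemma~\ref{L:BasicEst:v} with $q=2$) plus $\intQT \ue|\nabla\ve|^2$ (bounded via Lemma~\ref{L:Basic:MixUV}), producing $\intQT \Phi_1 \le C_T$.

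For $\Phi_2$ I would bound it linearly by $E$. The region $\{\ue < 1\}$ contributes non-positively since $\log\ue < 0$ there, so $\Phi_2(t) \le \|v_0\|_{L^\infty(\Omega)}\int_{\{\ue \ge 1\}} \ue\log\ue$. Using the pointwise bound $(\ue\log\ue)_- \le 1/e$, mass conservation $\intO \ue(t) \le M$ (from integrating the $\ue$-equation together with Lemma~\ref{L:LocalExis}), and $\intO(\ue\log\ue - \ue) \le E(t)/(4C_0)$ (since the $|\nabla\ve|^4/\ve^3$ term in $E$ is non-negative), one gets
\begin{align*}
\Phi_2(t) \le \|v_0\|_{L^\infty(\Omega)}\left( \intO \ue\log\ue + \frac{|\Omega|}{e}\right) \le C_1 E(t) + C_2.
\end{align*}

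Substituting these into \eqref{L:Basic:vNablav:State1} and dropping the non-negative $D$ yields a scalar inequality $E'(t) \le C_3 E(t) + C_4 \Phi_1(t) + C_5$ with $\Phi_1 \in L^1(0,T)$. Gronwall (applicable thanks to the uniform lower bound on $E$) then gives $\sup_{0<t<T} E(t) \le C_T$; reintegrating the full inequality over $(0,T)$ and using both this sup bound and the lower bound on $E(T)$ recovers $\intQT D \le C_T$. The claimed suprema of $\intO \ue\log\ue$ and $\intO |\nabla\ve|^4/\ve^3$ follow by isolating each term of $E$ and invoking $\ue\log\ue - \ue \ge -1$ once more. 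I expect the main subtlety to be the linear domination $\Phi_2(t) \le C_1 E(t) + C_2$, which is sharp but hinges precisely on the pointwise bound $\ue\log\ue \ge -1/e$ together with the a priori $L^\infty$-bound on $\ve$; without the latter, one would instead need $\intQT \ue^2 \le C_T$, which is not available at this stage.
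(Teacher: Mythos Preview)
Your proposal is correct and follows essentially the same route as the paper: both start from Lemma~\ref{L:st:vNablav}, control $\intQT \ue^{2\alpha-2}\ve|\nabla\ve|^2$ by interpolation (using $2\alpha-2\in(0,1]$) together with Lemmas~\ref{L:BasicEst:v} and~\ref{L:Basic:MixUV}, and then close via a Gr\"onwall argument through the $\ue\ve\log\ue$ term. Your treatment is in fact slightly more careful than the paper's in making explicit the uniform lower bound $E(t)\ge -4C_0|\Omega|$ and the handling of the negative part of $\ue\log\ue$, which the paper leaves implicit.
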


\begin{proof} By Lemma \ref{L:st:vNablav}, we have 
\begin{align*}
  & \frac{d}{dt} \intO \left( 4C_0(\ue \log \ue -\ue ) + \frac{|\nabla \ve|^{4}}{\ve^{3}} \right) + C_0 \intO \ve|\nabla \ue|^2 \\
  & + 2 \intO \frac{|\nabla \ve|^{2}}{\ve}|D^{2}\log \ve|^{2} + \intO \ue \frac{|\nabla \ve|^{4}}{\ve^3} \\
  & \hspace{2cm} \leq C \left( \intO \ue^{2\alpha -2} \ve|\nabla \ve|^2 + \intO \ue \ve \log \ue \right). 
 \end{align*} 
Let us estimate the term including $\ue^{2\alpha-2}\ve|\nabla \ve|^2$, and note that we keep the last term to apply the Gr\"onwall inequality at the end of this proof. Since the condition $1<\alpha\le 3/2$ guarantees that $2\alpha - 2 \in (0,1]$, an interpolation between $\ue^ 0=1$ and $\ue^ 1=\ue$ yields 
 \begin{align*}
\intQT \ue^{2\alpha -2} \ve|\nabla \ve|^2 \leq C_\alpha \|\ve\|_{L^\infty(Q_T)} \left( \intQT |\nabla \ve|^2 + \intQT \ue|\nabla \ve|^2 \right) 
 \end{align*}
in which the right-hand side finitely exists due to estimates \eqref{L:BasicEst:v:State2} and \eqref{L:Basic:MixUV:State1} in Lemmas \ref{L:BasicEst:v} and \ref{L:Basic:MixUV}. Thus, integrating over time gives 
 \begin{align*}
  & \intO \left( 4C_0(\ue(t) \log \ue(t) -\ue(t) ) + \frac{|\nabla \ve(t) |^{4}}{\ve^{3}(t)} \right) + C_0 \intQT \ve|\nabla \ue|^2 \\
  & + 2 \intQT \frac{|\nabla \ve|^{2}}{\ve}|D^{2}\log \ve|^{2} + \intQT \ue \frac{|\nabla \ve|^{4}}{\ve^3} \\
  & \leq \intO \left( 4C_0((u_0 +\eps) \log(u_0 +\eps) -(u_0+\eps)) + \frac{|\nabla v_{0} |^{4}}{v_{0}^{3}} \right) \\
  & \quad + C_{T,\alpha,\|v_{0}\|_{L^\infty(\Omega)}} + C\|v_{0}\|_{L^\infty(\Omega)} \int_0^t\left(\intO\ue\log\ue+\frac{|\Omega|}{e}\right)ds,
\end{align*}
where we used
$\intO\ue\ve\log\ue\le\|v_0\|_\infty
\intO(\ue\log\ue)_+$ and
$\intO(\ue\log\ue)_+\le\intO\ue\log\ue+|\Omega|/e$.  After adding a
fixed constant to make the energy nonnegative, Gronwall's inequality applies.
Therefore, we obtain an estimate for $\ue \log \ue$ in $L^\infty(0,T;L^1(\Omega))$, 
and so \eqref{L:Mod:FirstEst:State1}. 
\end{proof}

\begin{lemma}
\label{L:Mod:vNablav}
Let $1<\alpha\le 3/2$.  For any $2-2\alpha<r\le 0$,
\begin{align}
  \intQT \ue^{r} \ve|\nabla \ue|^2 + \intQT \frac{|\nabla \ve|^6}{\ve^5} \leq C_{T}.
 \label{L:Mod:vNablav:State1}
 \end{align} 
\end{lemma}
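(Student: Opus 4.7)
The statement splits into two essentially independent bounds, one for the weighted $|\nabla u_\varepsilon|^2$-integral and one for the pure $v_\varepsilon$-quantity, which I will treat separately.

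\textbf{The $v_\varepsilon$-bound.} My plan is to derive $\iint_{Q_T} v_\varepsilon^{-5}|\nabla v_\varepsilon|^6 \le C_T$ directly from Lemma \ref{L:Mod:Phi-NablaPhi} with the parameter choice $q=4$ and $N=3$, applied pointwise in time to $\varphi = v_\varepsilon(\cdot,t)$ (which is strictly positive, sufficiently smooth, and satisfies the no-flux condition by Lemma \ref{L:LocalExis}). This yields
\begin{equation*}
\int_\Omega v_\varepsilon^{-5}|\nabla v_\varepsilon|^6 \le (4+\sqrt{3})^2 \int_\Omega v_\varepsilon^{-1}|\nabla v_\varepsilon|^2 |D^2\log v_\varepsilon|^2,
\end{equation*}
and the time integral of the right-hand side is one of the dissipation terms bounded by $C_T$ in Lemma \ref{L:Mod:FirstEst}. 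Integration over $(0,T)$ then closes the estimate.

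\textbf{The $u_\varepsilon$-bound.} The endpoint $r=0$ is already the dissipation bound $\iint v_\varepsilon |\nabla u_\varepsilon|^2 \le C_T$ produced by Lemma \ref{L:Mod:FirstEst}. For $r\in (2-2\alpha,0)$, the assumption $1<\alpha\le 3/2$ gives $2-2\alpha \ge -1$ and $3-2\alpha \ge 0$, so that $(2-2\alpha,0)$ is contained in $(\max(-1,2-2\alpha),\min(0,3-2\alpha))$, and the estimate is a direct application of Lemma \ref{L:Basic:MixUV}. If one wishes to write the argument out once more, the strategy is exactly the one already used there: start from identity \eqref{L:Basic:MixUV:P1} with $q=r+1\in(0,1)$, integrate in time, apply Cauchy--Schwarz to the cross term to absorb $\tfrac12\int u_\varepsilon^{r} v_\varepsilon|\nabla u_\varepsilon|^2$ into the left, and control the remaining $\iint u_\varepsilon^{r+2\alpha-2} v_\varepsilon|\nabla v_\varepsilon|^2$ via $u_\varepsilon^{r+2\alpha-2}\le 1+u_\varepsilon$ (valid since $r+2\alpha-2\in(0,1]$ when $r\le 3-2\alpha$) combined with $\iint |\nabla v_\varepsilon|^2 \le C_T$ from Lemma \ref{L:BasicEst:v} and $\iint u_\varepsilon|\nabla v_\varepsilon|^2 \le C_T$ from Lemma \ref{L:Basic:MixUV}. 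The $L^1$-bound for $\int u_\varepsilon^{r+1}$ required by \eqref{L:Basic:MixUV:P1} follows from mass conservation and Jensen's inequality since $r+1\in(0,1)$.

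I do not expect a genuine obstacle: both halves reduce to invocations of previously established lemmas. The only book-keeping is to verify that the choice $q=4$ in Lemma \ref{L:Mod:Phi-NablaPhi} matches precisely the integrand $v_\varepsilon^{-1}|\nabla v_\varepsilon|^2|D^2\log v_\varepsilon|^2$ bounded in Lemma \ref{L:Mod:FirstEst}, and that the hypothesis $\alpha \le 3/2$ is exactly what is needed so that $3-2\alpha\ge 0$, which allows $r=0$ to sit in the admissible range of Lemma \ref{L:Basic:MixUV}. The real significance of the lemma is therefore not its proof but its role in the bootstrap to come: the new sixth-power gradient bound on $v_\varepsilon$ will be a key ingredient for controlling chemotactic contributions in subsequent sections.
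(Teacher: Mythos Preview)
Your proposal is correct and follows essentially the same approach as the paper. For the $v_\varepsilon$-bound, your argument (Lemma \ref{L:Mod:Phi-NablaPhi} with $q=4$, then Lemma \ref{L:Mod:FirstEst}) is identical to the paper's; for the $u_\varepsilon$-bound, the paper picks a single $r_1=(2-2\alpha)^+$ from Lemma \ref{L:Basic:MixUV} and interpolates with the $r=0$ bound from Lemma \ref{L:Mod:FirstEst}, whereas you invoke Lemma \ref{L:Basic:MixUV} directly on the full open interval and treat $r=0$ separately---an equivalent and arguably cleaner organisation. One small slip in your closing commentary: $r=0$ does \emph{not} lie in the admissible range of Lemma \ref{L:Basic:MixUV} (the upper endpoint there is a strict inequality), but since you handle $r=0$ via Lemma \ref{L:Mod:FirstEst} anyway, this does not affect the proof.
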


\begin{proof} 
For $r=0$ the assertion follows from Lemma
\ref{L:Mod:FirstEst}.  If $2-2\alpha<r<0$, choose
$\rho\in(2-2\alpha,r]$.  Lemma~\ref{L:Basic:MixUV} gives
\begin{align*}
\intQT\ue^\rho\ve|\nabla\ue|^2\le C_{T,\rho}.
\end{align*}
Since $s^r\le 1+s^\rho$ for $s>0$, Lemma
\ref{L:Mod:FirstEst} yields
$\intQT\ue^r\ve|\nabla\ue|^2\le C_{T,r}$.
On the other hand, the boundedness of the second term in \eqref{L:Mod:vNablav:State1} is obtained using Lemmas \ref{L:Mod:Phi-NablaPhi} and \ref{L:Mod:FirstEst}, which reads 
\begin{align*}
 \intQT \frac{|\nabla \ve|^6}{\ve^5} \le (4+\sqrt{3})^2\intQT \ve^{-1}|\nabla \ve|^{2}|D^{2}\log \ve|^{2} \le C_T, 
\end{align*}
where we notice that, for each $\eps>0$, $\ve$ is smooth enough to apply Lemma \ref{L:Mod:Phi-NablaPhi}.
\end{proof}

\begin{lemma} 
\label{L:Mod:Key0}
Let $1<\alpha\le 3/2$ and $m>-2$.  Then, whenever the terms on theright-hand side are finite,
\begin{align}
 \intQT \ue^{m+\frac{8}{3}} \ve \le C_{T}+ C_{m} \intQT \ue^{m} \ve|\nabla \ue|^2 + C_{m} \intQT \ue^{2m+3} \ve . 
\label{L:Mod:Key0:State}
\end{align}
\end{lemma}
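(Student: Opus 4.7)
The plan is to derive the estimate as a direct consequence of the Sobolev-type inequality in Lemma~\ref{L:SOBOineqn} combined with two interpolation steps, one exploiting the mass bound \eqref{Note:TotalMass} and one exploiting the regularity $\iint_{Q_{T}}|\nabla \ve|^{6}/\ve^{5}\le C_{T}$ furnished by Lemma~\ref{L:Mod:vNablav}. Concretely, I would first apply Lemma~\ref{L:SOBOineqn} with $N=3$, $\mu = N/(N-2) = 3$, the pair $(\phi,\psi)=(\ue,\ve)$, and the parameter choice $p=m+1$, obtaining the pointwise-in-time inequality
\begin{align*}
\|\ue^{m+2}\ve\|_{L^{3}(\Omega)} \le C_{m}\intO \ue^{m}\ve|\nabla \ue|^{2} + C_{m}\intO \ue^{m+2}\ve^{-1}|\nabla \ve|^{2}.
\end{align*}
Since the left-hand side is the $L^{3}$-norm of the very quantity I want to integrate, Hölder's inequality with conjugate exponents $3$ and $3/2$ applied to the decomposition $\ue^{m+8/3}\ve = (\ue^{m+2}\ve)\cdot \ue^{2/3}$, together with the mass bound $\intO \ue\le M$ from \eqref{Note:TotalMass}, immediately yields $\intO \ue^{m+8/3}\ve \le M^{2/3}\|\ue^{m+2}\ve\|_{L^{3}(\Omega)}$; after integrating in time, this reduces the lemma to bounding $\iint_{Q_{T}}\ue^{m+2}\ve^{-1}|\nabla \ve|^{2}$ by $C_{T}+C_{m}\iint_{Q_{T}}\ue^{2m+3}\ve$.

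The hard part is converting this chemotactic-flux-type quantity into a bound in $\ue$ alone. My plan is to use the strongest currently available signal regularity, $\iint_{Q_{T}}|\nabla \ve|^{6}/\ve^{5}\le C_{T}$, via the splitting
\begin{align*}
\ue^{m+2}\ve^{-1}|\nabla \ve|^{2} = \bigl(\ue^{m+2}\ve^{2/3}\bigr)\cdot \bigl(|\nabla \ve|^{6}\ve^{-5}\bigr)^{1/3},
\end{align*}
so that spatial Hölder with conjugate exponents $3/2$ and $3$, followed by an analogous temporal Hölder step and invoking Lemma~\ref{L:Mod:vNablav}, yields
\begin{align*}
\iint_{Q_{T}}\ue^{m+2}\ve^{-1}|\nabla \ve|^{2} \le C_{T}^{1/3}\left(\iint_{Q_{T}}\ue^{3(m+2)/2}\,\ve\right)^{2/3}.
\end{align*}
A pointwise Young inequality $\ue^{3(m+2)/2}\le \ue^{2m+3}+C_{m}$, valid whenever $3(m+2)/2\le 2m+3$, together with the scalar estimate $a^{2/3}\le 1+a$, then turns this into $C_{T}+C_{m}\iint_{Q_{T}}\ue^{2m+3}\ve$, closing the argument via the reduction above.

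The main subtlety I expect is the precise choice of the $\ve$-powers in the splitting $(\ue^{m+2}\ve^{2/3})\cdot(|\nabla \ve|^{6}\ve^{-5})^{1/3}$: any other allocation either fails to recover the exact integrand $|\nabla \ve|^{6}/\ve^{5}$ supplied by Lemma~\ref{L:Mod:vNablav}, or requires information on negative powers of $\ve$ that are not yet available uniformly in $\varepsilon$ at this stage of the analysis. A minor bookkeeping point is that if the concrete form of Lemma~\ref{L:SOBOineqn} effectively used carries an additional lower-order contribution $\intO \ue^{m+2}\ve$, it can be absorbed into $C_{m}\iint \ue^{2m+3}\ve + C_{T}$ by the same Young device; analogously, for those $m$ where $3(m+2)/2 > 2m+3$ the pointwise Young step is replaced by interpolation against the bounded quantity $\iint \ue\ve$, and the resulting constants are absorbed into $C_{m}$, matching the statement of the lemma.
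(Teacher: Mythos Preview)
Your first reduction---applying Lemma~\ref{L:SOBOineqn} with $p=m+1$ and then H\"older against the mass bound~\eqref{Note:TotalMass}---is exactly what the paper does. The divergence is in how you handle $\iint_{Q_T}\ue^{m+2}\ve^{-1}|\nabla\ve|^2$. The paper splits this term via Young's inequality as
\[
\ue^{m+2}\ve^{-1}|\nabla\ve|^2 \le C\, \ue\,\frac{|\nabla\ve|^4}{\ve^3} + C\,\ue^{2m+3}\ve,
\]
and invokes the bound $\iint_{Q_T}\ue\,|\nabla\ve|^4/\ve^3 \le C_T$ from Lemma~\ref{L:Mod:FirstEst}. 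The extra factor of $\ue$ carried by that dissipation term is precisely what makes the complementary exponent land on $2m+3$ for \emph{every} $m\in\mathbb{R}$, with no case analysis.

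Your route through $\iint_{Q_T}|\nabla\ve|^6/\ve^5$ instead produces the exponent $3(m+2)/2$, and the claimed remedy for $3(m+2)/2>2m+3$ (i.e.\ $m<0$)---``interpolation against $\iint\ue\ve$''---does not close the gap. Pointwise interpolation $\ue^{3(m+2)/2}\le C(\ue^{2m+3}+\ue)$ requires $3(m+2)/2$ to lie between $2m+3$ and $1$, which forces $m\le -4/3$; for $-4/3<m<0$ the exponent $3(m+2)/2$ exceeds both $2m+3$ and $1$, and no interpolation of this kind is available. This range is not academic: Lemma~\ref{L:Mod:Key} applies the present lemma with $m\in(-3/2,-1]$, and Lemma~\ref{L:Mod:m=2} uses $m=-2/3$, both inside the uncovered interval. (For $m\le -2/3$ one can in fact rescue your argument by observing $3(m+2)/2\le m+8/3$ and re-absorbing into the left-hand side after a Young step on the power $2/3$, but you do not invoke this, and it still leaves $-2/3<m<0$ open.) The cleanest fix is to switch signal-regularity inputs: use $\iint_{Q_T}\ue\,|\nabla\ve|^4/\ve^3$ from Lemma~\ref{L:Mod:FirstEst} together with a Young split at exponent~$2$, exactly as the paper does.
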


\begin{proof} By applying Lemma \ref{L:SOBOineqn}, we get 
 \begin{align*}
\intQT (\ue^{m+2} \ve ) \ue^{\frac{2}{3}} 
  & \leq \int_0^T \|\ue^{m+2}\ve\|_{\LO{3}} \|\ue^{\frac{2}{3}}\|_{\LO{\frac{3}{2}}} 
\leq M^{\frac{2}{3}} \int_0^T \|\ue^{m+2} \ve\|_{\LO{3}}
  \\
  &\leq C_{m} \intQT \ue^{m} \ve|\nabla \ue|^2+ C_{m} \intQT \ue^{m+2} \ve^{-1}|\nabla \ve|^2 + C_{m}.
 \end{align*}
 Here Lemma~\ref{L:SOBOineqn} is used with parameter $p=m+1>-1$.
Its zero-order term is bounded because
$\intO\ue\ve^{1/(m+2)}\le C_{m,v_0}\intO\ue\le C_m$; moreover,
$\|\ue^{2/3}\|_{L^{3/2}}=(\intO\ue)^{2/3}\le M^{2/3}$.
Then, using the Young's inequality,  
\begin{align*}
  \intQT \ue^{m+2} \ve^{-1}|\nabla \ve|^2 &\leq C \intQT \ue \frac{|\nabla \ve|^4}{\ve^3} + C \intQT \ue^{2m+3} \ve \\
  &\leq C_T + C \intQT \ue^{2m+3} \ve , 
 \end{align*}
where we have used Lemma \ref{L:Mod:FirstEst} for the uniform boundedness of $\ue|\nabla \ve|^4 / \ve^3 $ in $L^1(Q_T)$. Finally, estimate \eqref{L:Mod:Key0:State} is obtained by combining the above estimates.
\end{proof}

Taking into account the boundedness of $\ue |\nabla \ve|^4 / \ve^3 $ in $L^1(Q_T)$, as established in Lemma \ref{L:Mod:FirstEst}, we will improve estimate \eqref{L:Basic:MixUV:P2} provided in Lemma \ref{L:Basic:MixUV}. Then, by Lemma \ref{L:SOBOineqn}, we will obtain the boundedness of $\intQT \ue^ m \ve $ for a suitable $m>1$. 
 
\begin{lemma} \label{L:Mod:Key} 
Let $1<\alpha\le 3/2$.  For any $-\alpha <r \le 0$,
 \begin{align}
  \intQT \ue^{r} \ve|\nabla \ue|^2 + \intQT \ue^{\frac{5}{3}} \ve \le C_{T,r}.
\label{L:Mod:Key:State1}
 \end{align}
\end{lemma}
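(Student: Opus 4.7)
The plan is to extend the $r$-range of the bound $\intQT \ue^r\ve|\nabla \ue|^2\le C_T$ from $(2-2\alpha,0]$ (Lemma \ref{L:Mod:vNablav}) to $(-\alpha,0]$ by feeding the refined dissipative bound $\intQT \ue|\nabla \ve|^4/\ve^3\le C_T$ from Lemma \ref{L:Mod:FirstEst} into the $L^p$-energy identity \eqref{L:Basic:MixUV:P1}, and then to deduce $\intQT \ue^{5/3}\ve\le C_T$ directly from Lemma \ref{L:Mod:Key0}.

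First I would test the $\ue$-equation by $\ue^{q-1}$ with $q=r+1$ and, after absorbing the mixed term via Young's inequality and integrating over $[0,T]$, arrive, for $q\in(0,1]$, at
$$\intQT \ue^r\ve|\nabla \ue|^2 \le C_q + C_q\intQT \ue^{r-2+2\alpha}\ve|\nabla \ve|^2,$$
the boundary contribution $\intO \ue^q(T)+\intO u_0^q$ being controlled uniformly in $\eps$ by the mass bound $\intO \ue(t)\le M$ and by $u_0\in L^\infty(\Omega)$. The chemotactic term on the right is then estimated by a three-term Young's inequality using the bounded integrals of $\ue|\nabla \ve|^4/\ve^3$ (Lemma \ref{L:Mod:FirstEst}) and $|\nabla \ve|^6/\ve^5$ (Lemma \ref{L:Mod:vNablav}), together with the mass bound on $\ue\ve$ and $\ve\le\|v_0\|_{L^\infty(\Omega)}$, with weights chosen so that the residual integrand has a non-negative power of $\ue$ times a bounded power of $\ve$. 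The strict inequality $r>-\alpha$, equivalently $r-2+2\alpha>\alpha-2$, is precisely what provides the small margin needed to realise such a choice. For values of $r$ corresponding to $q\le 0$, the same identity is first applied with a regularised test function such as $(\ue+\sigma)^{q-1}$ with $\sigma>0$ to prevent the term $\intO u_0^q$ from blowing up in $\eps$, and $\sigma\to 0$ is then taken after the cross contribution has been absorbed; as a complementary tool, the point-wise comparison $\ue^r\le 1+\ue^{1-\alpha}$ (valid on $[1-\alpha,0]$) combined with Lemma \ref{L:Basic:MixUV:State1} covers the range $r\in[1-\alpha,0]$ directly.

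Once $\intQT \ue^r\ve|\nabla \ue|^2\le C_{T,r}$ is established for $r\in(-\alpha,0]$, applying Lemma \ref{L:Mod:Key0} with $m=-1$ (which lies in $(-\alpha,0]$ since $\alpha>1$) gives
$$\intQT \ue^{5/3}\ve \le C_T + C\intQT \ue^{-1}\ve|\nabla \ue|^2 + C\intQT \ue\ve \le C_{T,r},$$
which finishes the second summand.

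The main obstacle is the estimate of the chemotactic contribution for $r$ close to $-\alpha$: the $\ue$-exponent $r-2+2\alpha$ can be as negative as $\alpha-2<0$, and any naive two-term Young's inequality against $\ue|\nabla \ve|^4/\ve^3$ or $|\nabla \ve|^6/\ve^5$ alone leaves a residual involving a negative power of $\ue$. The delicate three-term Young's inequality using $\ue|\nabla \ve|^4/\ve^3$, $|\nabla \ve|^6/\ve^5$ and $\ue\ve$ simultaneously, combined with the strict inequality $r>-\alpha$, is the key to circumventing this issue.
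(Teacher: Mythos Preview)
There is a genuine gap in the first step. Your plan is to bound $\intQT \ue^{r-2+2\alpha}\ve|\nabla\ve|^2$ by a three-term Young's inequality against the $L^1(Q_T)$-bounded quantities $\ue|\nabla\ve|^4/\ve^3$, $|\nabla\ve|^6/\ve^5$ and $\ue\ve$, claiming that the residual then carries a non-negative power of $\ue$. This cannot work in the range $r\in(-\alpha,2-2\alpha]$, which is exactly the range left open after Lemma~\ref{L:Mod:vNablav}: for such $r$ the target $\ue$-exponent $r-2+2\alpha$ is non-positive, while every quantity of the form $\ue^{a}\ve^{b}|\nabla\ve|^{c}$ known to be bounded in $L^1(Q_T)$ at this stage carries $a\in\{0,1\}$. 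Any convex combination of these (which is all Young's inequality can produce once the $|\nabla\ve|$-powers are matched) therefore has $\ue$-exponent in $[0,1]$ and can never hit a strictly negative target. The margin $r-2+2\alpha>\alpha-2$ you invoke is irrelevant, since $\alpha-2$ is itself negative and there is no available bounded quantity with $\ue$-exponent $\alpha-2$. Your complementary pointwise bound $\ue^r\le 1+\ue^{1-\alpha}$ covers only $r\in[1-\alpha,0]$, which is contained in the already-known range $(2-2\alpha,0]$ and adds nothing. Consequently the second assertion also fails, since applying Lemma~\ref{L:Mod:Key0} with $m=-1$ requires the bound at $r=-1$, and $-1\le 2-2\alpha$ for every $\alpha\in(1,3/2]$.

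The paper circumvents this obstruction by a different mechanism: instead of applying Young's inequality to the cross term $\ue^{q-2+\alpha}\ve\,\nabla\ue\cdot\nabla\ve$, it integrates by parts once more to rewrite it as $-\tfrac{1}{2(q-1+\alpha)}\intO\ue^{q-1+\alpha}\Delta\ve^2$, choosing $q=1-\alpha+\delta$ so that the new $\ue$-exponent $q-1+\alpha=\delta$ is a small \emph{positive} number. The factor $\Delta\ve$ is then controlled in $L^{1+\delta}(Q_T)$ via maximal parabolic regularity for the $\ve$-equation, in terms of $\|\ue\ve\|_{L^{1+\delta}(Q_T)}$. This yields $\intQT\ue^{-\alpha+\delta}\ve|\nabla\ue|^2\le C_T\big(1+\intQT\ue^{1+\delta}\ve\big)$, which is then closed by coupling with Lemma~\ref{L:Mod:Key0} (at $m=-\alpha+\tilde\delta$) and an absorption argument. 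The integration-by-parts step is the idea you are missing: it trades a negative $\ue$-exponent on $|\nabla\ve|^2$ for a small positive exponent on $\Delta\ve$.
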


\begin{proof} For $q\in \mathbb{R}\setminus\{0;1\}$, straightforward computations show 
\begin{align*}
  \intO \ue^{q-1} \ve|\nabla \ue|^2 
  &= \frac{-1}{q(q-1)} \frac{d}{dt} \intO \ue^q + \intO 
  \ue^{q-2+\alpha} \ve \nabla \ue \cdot \nabla \ve + \frac{\ell}{q-1} \intO \ue^q \ve \\
  &= \frac{-1}{q(q-1)} \frac{d}{dt} \intO \ue^q + \frac{1}{2(q-1+\alpha)} \intO 
  \nabla \ue^{q-1+\alpha} \cdot \nabla \ve^2 + \frac{\ell}{q-1} \intO \ue^q \ve \\
  &= \frac{-1}{q(q-1)} \frac{d}{dt} \intO \ue^q - \frac{1}{2(q-1+\alpha)} \intO 
  \ue^{q-1+\alpha} \Delta \ve^2 + \frac{\ell}{q-1} \intO \ue^q \ve , 
 \end{align*} 
where $\Delta \ve^2 = 2|\nabla \ve|^2 + 2 \ve \Delta \ve $. Integrating over time implies
\begin{align*}
  \intQT \ue^{q-1} \ve|\nabla \ue|^2 
  &=\frac{1}{q(q-1)} \intO (u_0 +\eps)^q - \frac{1}{q(q-1)} \intO \ue^q + \frac{\ell}{q-1} \intQT \ue^q \ve \\
  & - \frac{1}{q-1+\alpha} \intQT 
  \ue^{q-1+\alpha} \left( |\nabla \ve|^2 + \ve \Delta \ve \right). 
\end{align*}
Let us choose $0<\delta\ll1$ such that $\delta<\alpha-1$ and set
\begin{align}
 q:=1 -\alpha +\delta,
\end{align}
which is strictly negative. 
Then, we see that
\begin{align*}
 q(q-1) >0 \; \text{ since } \; q<0, \quad \text{and} \quad q-1+\alpha >0,
\end{align*}
and therefore, we can skip the non-negative terms on the right-hand side to obtain 
\begin{align}
  \intQT \ue^{-\alpha+\delta} \ve|\nabla \ue|^2 
  &\le \frac{1}{(\alpha-1-\delta)(\alpha-\delta)} \intO (u_0 +\eps)^{1 -\alpha +\delta} - \frac{1}{\delta} \intQT 
  \ue^{\delta} \ve \Delta \ve . 
  \label{L:Mod:Key:P1}
\end{align}
Here the negative-power initial value is bounded uniformly in
$\eps$: since $-1<1-\alpha+\delta<0$,
\begin{align*}
\intO(u_0+\eps)^{1-\alpha+\delta}
 \le \intO u_0^{1-\alpha+\delta}
 \le |\Omega|+\intO u_0^{-1}<\infty.
\end{align*}
Thanks to the heat regularisation \cite[Lemma B.1]{reisch2024global} applied to the equation for $\ve $, we get
\begin{align}
 \|\Delta \ve\|_{L^{1+\delta}(Q_T)} \le C_{T} \left(  \|v_0\|_{W^{\frac{2\delta}{\delta+1},\delta+1}(\Omega)}  + \|\ue \ve\|_{L^{\delta+1}(Q_T)} \right), 
\end{align}
recalling that $v_0\in W^{2,\infty}(\Omega)$.. 
Hence, we get
\begin{align*}
 - \frac{1}{\delta} \intQT 
  \ue^{\delta} \ve \Delta \ve & \le \frac{1}{\delta}\|\ue^ \delta \ve\|_{L^{\frac{\delta+1}{\delta}}(Q_T)} \|\Delta \ve\|_{L^{\delta+1}(Q_T)} \\
  & \le \frac{C_{T}}{\delta} \|\ue^ \delta \ve\|_{L^{\frac{\delta+1}{\delta}}(Q_T)} \left( \|v_0\|_{W^{\frac{2\delta}{\delta+1},\delta+1}(\Omega)} + \|\ue \ve\|_{L^{\delta+1}(Q_T)} \right) \\
  & = \frac{C_{T}}{\delta} \left( \intQT \ue^{\delta+1} \ve^{\frac{\delta+1}{\delta}} \right)^{\frac{\delta}{\delta+1}} \left( C + \left( \intQT (\ue \ve )^{\delta+1} \right)^{\frac{1}{\delta+1}} \right) .
\end{align*}
We note that $\ue^{\delta+1} \ve^{(\delta+1)/\delta} \le C_\delta \ue^{\delta+1} \ve $ and $(\ue \ve )^{\delta+1} \le C_\delta \ue^{\delta+1} \ve $ since $\ve \in L^\infty(Q_T)$. Consequently, we obtain
\begin{align*}
 - \frac{1}{\delta} \intQT 
  \ue^{\delta} \ve \Delta \ve 
  & \le C_{T,\delta} \left( \intQT \ue^{\delta+1} \ve \right)^{\frac{\delta}{\delta+1}} \left( C + \left( \intQT \ue^{\delta+1} \ve \right)^{\frac{1}{\delta+1}} \right),
\end{align*}
and by the Young's inequality, 
\begin{align}
 - \frac{1}{\delta} \intQT 
  \ue^{\delta} \ve \Delta \ve 
  & \le C_{T,\delta} \left( 1 + \intQT \ue^{\delta+1} \ve \right) .
  \label{L:Mod:Key:P2}
\end{align}
It follows from \eqref{L:Mod:Key:P1} and \eqref{L:Mod:Key:P2} that 
\begin{align}
  \intQT \ue^{-\alpha+\delta} \ve|\nabla \ue|^2 
  &\le C_{T,\delta,\alpha} \left( 1 + \intQT \ue^{\delta+1} \ve \right) . 
  \label{L:Mod:Key:P3}
\end{align}

Now, let us use Lemma \ref{L:Mod:Key0}. By taking $\widetilde{\delta}$ such that $\delta<\widetilde{\delta}\le \alpha-1$, and then choosing $m=-\alpha +\widetilde{\delta}$ in estimate \eqref{L:Mod:Key0:State} to have that 
\begin{align}
 \intQT \ue^{\frac{8}{3}-\alpha +\widetilde{\delta}} \ve \le C_{T}+ C_{\widetilde{\delta},\alpha} \left( \intQT \ue^{-\alpha +\widetilde{\delta}} \ve|\nabla \ue|^2 + \intQT \ue^{3-2\alpha +2\widetilde{\delta}} \ve \right).
 \label{L:Mod:Key:P4}
\end{align}
On the other hand, by the choice of $\widetilde{\delta}$, one has $1<\delta+1 < \frac{8}{3}-\alpha +\widetilde{\delta}$. Therefore, the Young's inequality yields that $\ue^{\delta+1} \le C_\delta (\ue + \ue^{8/3-\alpha+\widetilde{\delta}})$. Since $\ue \ve$ is uniformly bounded in $L^1(Q_T)$, 
\begin{align}
\begin{aligned}
 \intQT \ue^{\delta+1} \ve &\le C_\delta \left( \intQT \ue \ve + \intQT \ue^{\frac{8}{3}-\alpha +\widetilde{\delta}} \ve \right) \\
 & \le C_\delta \left(1 + \intQT \ue^{\frac{8}{3}-\alpha +\widetilde{\delta}} \ve \right) .
\end{aligned} 
 \label{L:Mod:Key:P5}
\end{align}
A combination of \eqref{L:Mod:Key:P3}-\eqref{L:Mod:Key:P5}, we get 
\begin{align}
  \intQT \ue^{-\alpha+\delta} \ve|\nabla \ue|^2 
  &\le C_{T} \left( 1 + \intQT \ue^{-\alpha +\widetilde{\delta}} \ve|\nabla \ue|^2 + \intQT \ue^{3-2\alpha +2\widetilde{\delta}} \ve \right) ,
  \label{L:Mod:Key:P6}
\end{align}
where the constant $C_T$ depends on $\delta,\widetilde{\delta}$ and $\alpha$. Since $-\alpha+\delta<-\alpha + \widetilde{\delta} < 0$, we can use again the Young's inequality to see that $\ue^{-\alpha +\widetilde{\delta}} \le \kappa \ue^{-\alpha + \delta} +  C_{\kappa}$, where $\alpha,\delta,\widetilde{\delta}$ are implicit in the constant $C$. Hence, for any constant $\kappa>0$,
\begin{align}
\begin{aligned}
 \intQT \ue^{-\alpha +\widetilde{\delta}} \ve|\nabla \ue|^2 & \le C_\kappa \intQT \ve|\nabla \ue|^2 + \kappa \intQT \ue^{-\alpha +\delta} \ve|\nabla \ue|^2 \\
 & \le C_{T,\kappa} + \kappa \intQT \ue^{-\alpha +\delta} \ve|\nabla \ue|^2,
\end{aligned}
 \label{L:Mod:Key:P7}
\end{align}
where $\ve|\nabla \ue|^2$ is uniformly bounded in $L^1(Q_T)$ due to Lemma \ref{L:Mod:vNablav}. Then, a combination of estimates \eqref{L:Mod:Key:P6}-\eqref{L:Mod:Key:P7} shows 
\begin{align}
  \intQT \ue^{-\alpha+\delta} \ve|\nabla \ue|^2 
  &\le C_{T,\kappa} + \kappa C_T \intQT \ue^{-\alpha +\delta} \ve|\nabla \ue|^2 + C_T \intQT \ue^{3-2\alpha +2\widetilde{\delta}} \ve .
  \label{L:Mod:Key:P8}
\end{align}
Since $\widetilde{\delta}\le \alpha-1$, the exponent $3-2\alpha +2\widetilde{\delta}$ belongs to the close interval $[0,1]$, i.e., the last term of \eqref{L:Mod:Key:P6} is uniformly bounded due to Lemma \ref{L:LocalExis}. Finally, the estimate for the first term in \eqref{L:Mod:Key:State1} is obtained by choosing $\kappa$ small enough, where the range of $-\alpha<r\le 0$ is obtained by an interpolation between the uniform boundedness of $ \ue^{-\alpha+\delta} \ve|\nabla \ue|^2$ and $\ve|\nabla \ue|^2$ (given in Lemma \ref{L:Mod:vNablav}) in $L^1(Q_T)$. Finally, the estimate for $\ue^{5/3} \ve $ in $L^1(Q_T)$ is derived by choosing $\widetilde{\delta}=\alpha-1$ in \eqref{L:Mod:Key:P4}.
\end{proof}

\begin{lemma}\label{L:Mod:m=2}
Let $1<\alpha\le 3/2$.  It holds that 
 \begin{align*}
  \intQT u_{\eps}^2 \ve \leq C_T.
 \end{align*}
\end{lemma}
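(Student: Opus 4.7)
The plan is to deduce the bound as an immediate consequence of the preceding two lemmas by choosing the parameter $m$ in Lemma \ref{L:Mod:Key0} so that the exponent $m+\tfrac{8}{3}$ equals $2$, and then verifying that both terms on the right-hand side of \eqref{L:Mod:Key0:State} have already been controlled uniformly in $\varepsilon$.

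Concretely, I would set
\[
 m := 2 - \tfrac{8}{3} = -\tfrac{2}{3},
\]
so that Lemma \ref{L:Mod:Key0} yields
\[
 \intQT u_\varepsilon^{2} v_\varepsilon \;\le\; C_T + C\,\intQT u_\varepsilon^{-2/3}\, v_\varepsilon\, |\nabla u_\varepsilon|^2 + C\,\intQT u_\varepsilon^{5/3}\, v_\varepsilon.
\]
The second term fits the range $-\alpha < r \le 0$ from Lemma \ref{L:Mod:Key}, since for $1 < \alpha \le \tfrac{3}{2}$ one has $-\alpha < -\tfrac{2}{3} \le 0$; hence it is bounded by $C_T$. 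The third term is the second quantity controlled by the very same Lemma \ref{L:Mod:Key}. Combining these bounds gives $\intQT u_\varepsilon^{2} v_\varepsilon \le C_T$, which is the desired conclusion.

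The statement is therefore essentially a bookkeeping corollary of Lemmas \ref{L:Mod:Key0} and \ref{L:Mod:Key}; the main substantive obstacle — producing the negative-exponent energy bound $\intQT u_\varepsilon^{-\alpha+\delta} v_\varepsilon |\nabla u_\varepsilon|^2 \le C_T$ and the intermediate bound on $\intQT u_\varepsilon^{5/3} v_\varepsilon$ that drive the feedback — has already been overcome in the proof of Lemma \ref{L:Mod:Key}. The only point one needs to verify is the compatibility of the exponents for $1<\alpha\le \tfrac{3}{2}$, which is immediate from $-\alpha < -\tfrac{2}{3}$ in this range.
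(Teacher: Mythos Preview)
Your proof is correct and follows exactly the same approach as the paper: choose $m=-\tfrac{2}{3}$ in Lemma \ref{L:Mod:Key0} and bound both resulting right-hand side terms via Lemma \ref{L:Mod:Key}. The paper's proof is essentially the same one-line argument, so there is nothing to add.
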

\begin{proof} By choosing $m=-2/3$ in Lemma \ref{L:Mod:Key0}, we get 
\begin{align*}
 \intQT \ue^{2} \ve \le C_{T}+ C \intQT \ue^{-\frac{2}{3}} \ve|\nabla \ue|^2 + C \intQT \ue^{\frac{5}{3}} \ve , 
\end{align*}
where the right-hand side is uniformly bounded according to Lemma \ref{L:Mod:Key}.
\end{proof}

\subsection{Mutual feedback up to the boundedness of $\ue^m\ve$ for $m>6$}
\label{Sec:M:FB}

We aim to perform feedback arguments of the forms (M1) and (M2) as introduced in the first section, which will serve our bootstrap argument in the following subsection.

\begin{lemma}
\label{L:Mod:SeqDef}
Let $1<\alpha\le 3/2$.  Assume that the sequence $\{(m_k,p_k,r_k)\}_{k=0,1,\dots} \subset \mathbb{R}^3$ is defined by $m_0\ge 2$, and for $k\ge 0$,
\begin{gather*}
  p_{k}:= \frac{m_{k}}{2} + \frac{7}{2} - 2\alpha, \quad 
  r_{k} : = \min\left\{ \frac{4}{3}p_{k} -2 ;\, p_{k}-1 \right\} , \quad 
m_{k+1} := \frac{2}{3}p_k + r_k +2 .  
\end{gather*} 
Then, the following conclusions hold.
\begin{itemize}
 \item[a)] For all $k\ge 0$, we have $p_k>1$ and $\frac{3}{2}(r_k+2)\le m_{k+1}$; 
 \item[b)] There exists $k_0$ such that $p_{k_0}>3$ and $m_{k_0+1}>6$; and 
 \item[c)] The sequence $\{m_k\}$ is not increasing as $p_k>24-12\alpha$.
\end{itemize}

\end{lemma}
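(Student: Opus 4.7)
The plan is to split the analysis into two regimes according to which term attains the minimum defining $r_k$: Regime A when $p_k\le 3$ (so $r_k=\tfrac{4}{3}p_k-2$), and Regime B when $p_k>3$ (so $r_k=p_k-1$). Substituting into the definition of $m_{k+1}$ yields two explicit one-dimensional affine recursions, $m_{k+1}=m_k+7-4\alpha$ in Regime A and $m_{k+1}=\tfrac{5}{6}m_k+\tfrac{41}{6}-\tfrac{10\alpha}{3}$ in Regime B; all three assertions will be read off from these formulas together with the standing hypothesis $1<\alpha\le 3/2$.

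For part (a), I would first show by induction that $m_k\ge 2$ for every $k$. Indeed, in Regime A we have $m_{k+1}-m_k=7-4\alpha\ge 1$ (using $\alpha\le 3/2$), while in Regime B one computes $m_{k+1}\ge\tfrac{5}{6}\cdot 2+\tfrac{41}{6}-\tfrac{10\alpha}{3}\ge \tfrac{7}{2}$, so the lower bound propagates. Then $p_k=\tfrac{m_k}{2}+\tfrac{7}{2}-2\alpha\ge 1+\tfrac{7}{2}-3=\tfrac{3}{2}>1$ follows at once. The second inequality $\tfrac{3}{2}(r_k+2)\le m_{k+1}$ rearranges algebraically to $r_k\le \tfrac{4}{3}p_k-2$, which is tautological from the defining minimum.

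For part (b), the Regime-A recursion strictly increases $m_k$ by at least $1$ per step, so after finitely many iterations we must cross into Regime B. Letting $k_0$ be the first index with $p_{k_0}>3$, Regime B gives $m_{k_0+1}=\tfrac{5}{3}p_{k_0}+1>6$, delivering both assertions of (b) simultaneously. For part (c), the affine Regime-B recursion has fixed point $m^\star=41-20\alpha$, which corresponds under $p=\tfrac{m}{2}+\tfrac{7}{2}-2\alpha$ to the matching value $p^\star=24-12\alpha$; thus $p_k>24-12\alpha$ is equivalent to $m_k>m^\star$, and since the slope $\tfrac{5}{6}<1$, being above the fixed point forces $m_{k+1}<m_k$.

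I do not anticipate a serious obstacle, as the argument is essentially bookkeeping on an explicit piecewise-affine recursion. The only mild subtlety is verifying the strict positivity of $7-4\alpha$ in Regime A, which is precisely what drives the sequence into Regime B in finite time and makes part (b) nonvacuous under the moderate-chemotaxis hypothesis $\alpha\le 3/2$; this is also why $24-12\alpha>3$, ensuring that the threshold in (c) lies inside Regime B where the contraction computation is valid.
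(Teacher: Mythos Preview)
Your proposal is correct and follows essentially the same approach as the paper: both split into the two regimes determined by the minimum in $r_k$, recover the explicit affine recursions $m_{k+1}=2p_k$ (Regime A) and $m_{k+1}=\tfrac{5}{3}p_k+1$ (Regime B), and read off (a)--(c) from these. Your treatment of (a) is in fact slightly more thorough than the paper's (which only records the tautology $r_k\le\tfrac{4}{3}p_k-2$ and does not spell out the induction ensuring $p_k>1$), and for (c) you phrase the computation via the fixed point $m^\star=41-20\alpha$ of the contracting Regime-B map, whereas the paper simply computes $m_{k+1}-m_k=-\tfrac{1}{3}p_k+8-4\alpha$ directly; these are the same calculation in different clothes.
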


\begin{proof} Part a is obvious since $r_k \le 4p_k/3-2$. Let us prove Part b. Assume that $p_k\le 3$ for all $k\ge 1$. Then, we have $r_k=4p_k/3-2$ and so $m_{k+1}=2p_k$. This implies 
\begin{align*}
 p_{k+1} = p_k + \frac{7}{2}-2\alpha = \dots = p_0 + (k+1)\left(\frac{7}{2}-2\alpha\right) \to \infty \quad \text{as } k \to \infty. 
\end{align*} 
By contradiction, there exists $k_0\ge 0$ such that $p_{k_0}>3$. 
Consequently, 
\begin{align*}
 m_{k_0+1} = \frac{2}{3}p_{k_0} + r_{k_0} +2 = \frac{5}{3}p_{k_0} + 1 > 6. 
\end{align*}
For Part c, since $p_k>24-12\alpha$, we have $p_k>3$ and $r_k=p_k-1$. Thus, we get 
\begin{align*}
 m_{k+1}-m_k= \left( \frac{2}{3}p_k + \left(p_k -1 \right) +2 \right) - \left( 2p_k -7 + 4\alpha \right) = - \frac{1}{3}p_k+ 8-4\alpha <0, 
\end{align*}
i.e., $\{m_k\}$ is not increasing for $k\in \{j\in \mathbb{N}: p_j>24-12\alpha\}$.  
\end{proof}

\begin{lemma}\label{L:Mod:BS} 
Let $1<\alpha\le 3/2$ and $\{(m_k,p_k,r_k)\}_{k=0,1,\dots}$ be the sequence defined by Lemma \ref{L:Mod:SeqDef}. If
 \begin{align}
\intQT \ue^{m_k} \ve \leq C_T \quad \text{for some } m_k\ge 2, 
  \label{L:Mod:BS:S}
 \end{align} 
 then it holds 
 \begin{align}
\sup_{0<t<T}\intO \ue^{p_k}(t) + \intQT \ue^{r} \ve|\nabla \ue|^2 \leq C_{T,k}, \label{L:Mod:BS:S2}
 \end{align} 
 for all $r\in V_k:= \{r \in \mathbb{R}: -\alpha < r \leq p_k-1\} .$ 
\end{lemma}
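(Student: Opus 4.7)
The strategy is a standard $L^{p_k}$-energy identity whose chemotactic right-hand side is precisely what the hypothesis \eqref{L:Mod:BS:S} and Lemma \ref{L:Mod:FirstEst} can jointly absorb; the exponent $p_k = m_k/2 + 7/2 - 2\alpha$ is engineered for exactly this absorption. Testing the first equation in \eqref{System:RegPro0} by $p_k\ue^{p_k-1}$ (which is admissible since $p_k>1$ by Lemma \ref{L:Mod:SeqDef}(a)), integrating by parts, and splitting the cross term via Young's inequality, I would obtain
\[
\frac{d}{dt}\intO \ue^{p_k} + \frac{p_k(p_k-1)}{2}\intO \ue^{p_k-1}\ve|\nabla\ue|^2 \le \frac{p_k(p_k-1)}{2}\intO \ue^{p_k-3+2\alpha}\ve|\nabla\ve|^2 + \ell p_k \intO \ue^{p_k}\ve.
\]

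The heart of the proof — and what I expect to be the only nontrivial step — is bounding the first term on the right using the factorisation
\[
\ue^{p_k-3+2\alpha}\ve|\nabla\ve|^2 = \bigl(\ue^{1/2}\ve^{-3/2}|\nabla\ve|^2\bigr)\bigl(\ue^{p_k-7/2+2\alpha}\ve^{5/2}\bigr)
\]
together with Cauchy–Schwarz. Because the identity $2p_k - 7 + 4\alpha = m_k$ holds by the very definition of $p_k$, and because $\ve \le \|v_0\|_{L^\infty(\Omega)}$ from Lemma \ref{L:LocalExis}, this produces
\[
\intQT \ue^{p_k-3+2\alpha}\ve|\nabla\ve|^2 \le \left(\intQT \ue\,\ve^{-3}|\nabla\ve|^4\right)^{\!1/2}\left(\intQT \ue^{m_k}\ve^5\right)^{\!1/2} \le C_T,
\]
the first factor controlled by Lemma \ref{L:Mod:FirstEst} and the second by the hypothesis \eqref{L:Mod:BS:S} after pulling out four powers of $\|v_0\|_{L^\infty(\Omega)}$. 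For the forcing $\ell p_k\intO \ue^{p_k}\ve$, I would simply bound $\ve$ by $\|v_0\|_{L^\infty(\Omega)}$; integrating the energy inequality in time then gives
\[
\intO \ue^{p_k}(t) \le C_{T,k} + C\int_0^t \intO \ue^{p_k}(s)\,ds,
\]
and Gronwall's lemma yields $\sup_{0<t<T}\intO \ue^{p_k}(t) \le C_{T,k}$. Re-inserting this bound into the same inequality produces $\intQT \ue^{p_k-1}\ve|\nabla\ue|^2 \le C_{T,k}$, which settles the endpoint $r = p_k-1$.

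To cover the full range $V_k = (-\alpha,\,p_k-1]$ I would split: for $r\in(-\alpha,0]$ the bound is already delivered by Lemma \ref{L:Mod:Key}; for $r\in(0,p_k-1)$ I would write $r = \theta(p_k-1)$ with $\theta\in(0,1)$ and invoke the pointwise Young inequality
\[
\ue^r\ve|\nabla\ue|^2 \le \theta\,\ue^{p_k-1}\ve|\nabla\ue|^2 + (1-\theta)\,\ve|\nabla\ue|^2,
\]
whose first summand is now controlled and whose second summand lies in $L^1(Q_T)$ by Lemma \ref{L:Mod:FirstEst}. Beyond the Cauchy–Schwarz splitting, which is the only place the precise algebraic matching $2p_k-7+4\alpha = m_k$ is used, the remainder is routine testing and Gronwall bookkeeping, so I expect no further obstacles.
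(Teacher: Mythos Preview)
Your proof is correct and follows essentially the same approach as the paper's: the $L^{p_k}$-energy identity with Young's split on the cross term, the crucial algebraic matching $2p_k-7+4\alpha=m_k$ to bound $\intQT\ue^{p_k-3+2\alpha}\ve|\nabla\ve|^2$ via $\intQT\ue\,\ve^{-3}|\nabla\ve|^4$ (Lemma~\ref{L:Mod:FirstEst}) and the hypothesis, then Gr\"onwall and interpolation over $V_k$. The only cosmetic difference is that the paper uses the pointwise Young inequality $\ue^{p_k-3+2\alpha}\ve|\nabla\ve|^2\le \tfrac14\ue\ve^{-3}|\nabla\ve|^4+\ue^{m_k}\ve^5$ where you use the integral Cauchy--Schwarz version of the same splitting, and the paper handles the interpolation on $V_k$ in one stroke (between $\tilde r\in(-\alpha,0]$ and $p_k-1$) rather than your two-piece split.
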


\begin{proof} For this proof, we will use the following computation of the $L^p$-energy 
\begin{align}
\begin{aligned} 
 & \intO \ue^{p} + p(p-1) \intQt \ue^{p-1} \ve|\nabla \ue|^2 \\
 & = \intO (u_0 + \eps)^{p} + p(p-1)\intQt \ue^{p-2+\alpha} \ve \nabla \ue \cdot \nabla \ve + \ell p \intQt \ue^{p} \ve 
 \end{aligned}
 \label{L:Mod:BS:P1}
\end{align}
for $p=p_k$. Since $m_k\ge2$ and $\alpha\le3/2$, we have
\begin{align*}
 p_k=\frac{m_k}{2} + \frac{7}{2}-2\alpha
 \ge \frac{9}{2}-2\alpha\ge\frac32>1.
\end{align*}
Therefore, all terms on the left-hand side of \eqref{L:Mod:BS:P1} are nonnegative. Let us estimate the term including $\ue^{p-2+\alpha} \ve \nabla \ue \cdot \nabla \ve$. Using the Young's inequality, we will split this term into two other terms. The first one can be absorbed into the left-hand side, while the second one can be estimated due to Lemma \ref{L:Mod:FirstEst} and the assumption \eqref{L:Mod:BS:S}. 
Indeed, 
\begin{align}
 \intQt \ue^{p_k-2+\alpha} \ve \nabla \ue \cdot \nabla \ve \le \frac{1}{2} \intQt \ue^{p_k-1} \ve|\nabla \ue|^2 + \frac{1}{2} \intQt \ue^{p_k-3+ 2\alpha} \ve|\nabla \ve|^2,
 \label{L:Mod:BS:P1a}
\end{align}
where, by Lemma \ref{L:Mod:FirstEst}  and the assumption \eqref{L:Mod:BS:S}, 
\begin{align}
  \begin{split}
  \intQt \ue^{p_k-3+ 2\alpha} \ve|\nabla \ve|^2 & \le \frac{1}{4} \intQt \ue \frac{|\nabla \ve|^{4}}{\ve^3} + \intQt \ue^{2p_k + 4\alpha - 7} \ve^5
 \\
 & \le C_T + \|v_0\|_{L^\infty(Q_T)}^4 \intQt \ue^{m_k} \ve  \leq C_T.
  \end{split}
  \label{L:Mod:BS:P1b}
\end{align} 
It follows from \eqref{L:Mod:BS:P1} and the above estimates that 
\begin{align}
\begin{aligned}
 \intO \ue^{p_k}(t) + \frac{p_k(p_k-1)}{2} \intQt \ue^{p_k-1} \ve|\nabla \ue|^2 \leq C_{p_k,T} + \ell p_k \|v_0\|_{\LO{\infty}} \intQt \ue^{p_k} .
\end{aligned}
  \label{L:Mod:BS:P2}
 \end{align}
  We get the estimate for the first term in \eqref{L:Mod:BS:S2}, i.e. the spatial integral of $ \ue^{p_k}$, by using the Gr\"onwall inequality. For a prescribed
$r\in(-\alpha,p_k-1]$, choose $\rho\in(-\alpha,\min\{r,0\}]$.  Since $s^r\le s^\rho+s^{p_k-1}$ for $s>0$, Lemma \ref{L:Mod:Key} and
\eqref{L:Mod:BS:P2} give
\begin{align*}
 \intQT \ue^{r} \ve|\nabla \ue|^2 \le C \left( \intQT \ue^{\tilde{r}} \ve|\nabla \ue|^2 + \intQT \ue^{p_k-1} \ve|\nabla \ue|^2 \right) \le C_T, 
 \end{align*}
 where we have used Lemma \ref{L:Mod:Key}. 
\end{proof}

\begin{lemma} \label{L:Mod:BS2} 
Let $1<\alpha\le 3/2$ and $\{(m_k,p_k,r_k)\}_{k=0,1,\dots}$ be the sequence defined by Lemma \ref{L:Mod:SeqDef}. If 
\begin{align}\label{L:Mod:BS2:State1}
  \sup_{0<t<T} \intO \ue^{p_k}(t) \leq C_T \quad \text{for } k\ge 0,
 \end{align} 
then 
 \begin{align}
  \intQT \ue^{m_{k+1}} \ve \leq C \intQT \ue^{r_k} \ve|\nabla \ue|^2+ C \intQT \ue^{r_k+2} \ve^{-1}|\nabla \ve|^2 + C
  \label{L:Mod:BS2:S2}
 \end{align}
 and 
\begin{align}\label{L:Mod:BS3:State2}
  \intQT \ue^{r_k+2} \ve^{-1}|\nabla \ve|^2 \leq C_{T,\eta,k} +  \eta \intQT 
 \ue^{m_{k+1}} \ve , 
 \end{align} 
whenever the above right-hand sides are finite.
\end{lemma}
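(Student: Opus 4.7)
\textbf{Proof plan for Lemma~\ref{L:Mod:BS2}.} The two inequalities are handled separately: \eqref{L:Mod:BS2:S2} will follow from H\"older combined with the Sobolev-type estimate of Lemma~\ref{L:SOBOineqn}, while \eqref{L:Mod:BS3:State2} will follow from a carefully tuned Young's inequality whose remainder is controllable by Lemmas~\ref{L:Mod:FirstEst} and \ref{L:Mod:vNablav}.

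For \eqref{L:Mod:BS2:S2}, the plan is to split $\ue^{m_{k+1}}\ve = (\ue^{r_k+2}\ve)\cdot \ue^{m_{k+1}-r_k-2}$ and apply H\"older in space with conjugate exponents $3$ and $3/2$. Since $m_{k+1}-r_k-2=\tfrac{2}{3}p_k$ by the definition of the sequence, the second factor is bounded by $(\intO \ue^{p_k})^{2/3}\le C_T^{2/3}$ under hypothesis \eqref{L:Mod:BS2:State1}. For the first factor, Lemma~\ref{L:SOBOineqn} applied with $\phi=\ue$, $\psi=\ve$, $p=r_k+1$ and $\mu=3$ (the Sobolev index $N/(N-2)$ for $N=3$) yields
\[
\|\ue^{r_k+2}\ve\|_{L^3(\Omega)}\le C\intO \ue^{r_k}\ve|\nabla \ue|^2 + C\intO \ue^{r_k+2}\ve^{-1}|\nabla \ve|^2,
\]
and integrating in time produces \eqref{L:Mod:BS2:S2}.

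For \eqref{L:Mod:BS3:State2}, the plan is to apply Young to the factorization $\ue^{r_k+2}\ve^{-1}|\nabla \ve|^2=X\cdot Y$ with conjugate exponents
\[
p_1':=\frac{m_{k+1}}{r_k+2},\qquad p_2':=\frac{p_1'}{p_1'-1},\qquad X:=(\ue^{m_{k+1}}\ve)^{1/p_1'}.
\]
Part~(a) of Lemma~\ref{L:Mod:SeqDef} ensures $r_k+2>0$ and $m_{k+1}-(r_k+2)=\tfrac{2}{3}p_k>0$, hence $p_1'>1$. The choice of $p_1'$ is designed precisely to annihilate the $\ue$-power in $Y^{p_2'}$, and a direct computation then reveals the clean identity
\[
Y^{p_2'}=\frac{|\nabla \ve|^a}{\ve^{a-1}},\qquad a:=2p_2',
\]
so that the gap between the $|\nabla \ve|$- and $\ve$-powers is always exactly $1$. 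Inspecting the two branches of $r_k$ from Lemma~\ref{L:Mod:SeqDef}, one finds $p_1'=\tfrac{3}{2}$ and $a=6$ when $r_k=\tfrac{4}{3}p_k-2$, while $p_1'\in[\tfrac{3}{2},\tfrac{5}{3})$ and $a\in(5,6]$ when $r_k=p_k-1$; in either case $a\in[5,6]$.

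Since $a\in[5,6]$, the interpolation
\[
\frac{|\nabla \ve|^a}{\ve^{a-1}} = \left(\frac{|\nabla \ve|^4}{\ve^3}\right)^{\theta}\left(\frac{|\nabla \ve|^6}{\ve^5}\right)^{1-\theta},\qquad \theta=\tfrac{6-a}{2}\in[0,\tfrac{1}{2}],
\]
combined with H\"older in $Q_T$ with exponents $1/\theta$ and $1/(1-\theta)$, together with the bounds $\intQT |\nabla \ve|^4/\ve^3\le T\cdot C_T$ (from Lemma~\ref{L:Mod:FirstEst}) and $\intQT |\nabla \ve|^6/\ve^5\le C_T$ (from Lemma~\ref{L:Mod:vNablav}), produces $\intQT Y^{p_2'}\le C_T$. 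Integrating the Young pointwise estimate $\ue^{r_k+2}\ve^{-1}|\nabla \ve|^2 \le \eta\, \ue^{m_{k+1}}\ve + C_\eta\, Y^{p_2'}$ over $Q_T$ then yields \eqref{L:Mod:BS3:State2}. The delicate step is the verification that $a$ lands in the interpolation window $[4,6]$ uniformly across both branches of $r_k$; this is exactly what is secured by the calibrated definition $r_k=\min\{\tfrac{4}{3}p_k-2,\,p_k-1\}$.
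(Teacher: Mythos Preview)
Your proof of \eqref{L:Mod:BS2:S2} is identical to the paper's: H\"older in space with exponents $3$ and $3/2$, the identity $m_{k+1}-r_k-2=\tfrac{2}{3}p_k$, and Lemma~\ref{L:SOBOineqn} with $p=r_k+1$, $\mu=3$.

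For \eqref{L:Mod:BS3:State2} your argument is correct but takes a slightly different tactical route from the paper. You tune the Young exponent to $p_1'=m_{k+1}/(r_k+2)$ so that the $\ue$-power in the remainder vanishes exactly, leaving $|\nabla\ve|^a/\ve^{a-1}$ with $a\in(5,6]$; you then interpolate this between the two available bounds on $|\nabla\ve|^4/\ve^3$ and $|\nabla\ve|^6/\ve^5$. The paper instead fixes the Young exponents at $3$ and $3/2$, obtaining directly the remainder $|\nabla\ve|^6/\ve^5$ (bounded by Lemma~\ref{L:Mod:vNablav}) and the term $\ue^{3(r_k+2)/2}\ve$; the latter is then absorbed into $\eta\,\ue^{m_{k+1}}\ve+C$ via the elementary inequality $\ue^s\le C(\ue^{m_{k+1}}+1)$, which is legitimate precisely because Lemma~\ref{L:Mod:SeqDef}(a) guarantees $0\le \tfrac{3}{2}(r_k+2)\le m_{k+1}$. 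Your approach buys an exact match in the $\ue$-power at the cost of a secondary interpolation step; the paper's approach is a line shorter and uses only the $|\nabla\ve|^6/\ve^5$ bound, but relies on the slack $\tfrac{3}{2}(r_k+2)\le m_{k+1}$ built into the sequence. Both are valid.
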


\begin{proof} Let us first prove \eqref{L:Mod:BS2:S2}. We can observe the relation $ (3/2)( m_{k+1} - (r_k+2)) = p_k$. Consequently, for each $t\in (0,T)$, the assumption \eqref{L:Mod:BS2:State1} ensures that 
\begin{align}
 \|\ue^{m_{k+1} - (r_k+2)}(t) \|_{\LO{\frac{3}{2}}} = \|\ue(t) \|_{\LO{p_k}}^{m_{k+1} - (r_k+2)} \le \|\ue\|_{L^\infty(0,T;\LO{p_k})}^{m_{k+1} - (r_k+2)} \le C_T.
 \label{L:Mod:BS2:P1}
\end{align}
Therefore, by the H\"older inequality, 
 \begin{align*} 
 \intO \ue^{m_{k+1}}(t) \ve(t) & = \intO \big(\ue^{r_k+2}(t) \ve(t) \big) \ue^{m_{k+1} - (r_k+2)}(t) \\
  & \leq \|\ue^{r_k+2}(t) \ve(t) \|_{\LO{3}} \|\ue^{m_{k+1} - (r_k+2)}(t) \|_{\LO{\frac{3}{2}}} \\
  & \leq \|\ue^{r_k+2}(t) \ve(t) \|_{\LO{3}} \|\ue\|_{L^\infty(0,T;\LO{p_k})}^{m_{k+1} - (r_k+2)},
 \end{align*}
and consequently,
\begin{align*} 
 \intO \ue^{m_{k+1}}(t) \ve(t) \leq C_{T,k} \|\ue^{r_k+2}(t) \ve(t) \|_{\LO{3}} .
 \end{align*}
By the boundedness \eqref{L:Mod:BS2:P1}, we only have to estimate the term including $\ue^{r_k+2}(t) \ve(t)$, which can be based on Lemma \ref{L:SOBOineqn} as
\begin{align*} 
  \|\ue^{r_k+2}(t) \ve(t) \|_{\LO{3}} 
\leq C_T \left( \intO \ue^{r_k} \ve|\nabla \ue|^2+ \intO \ue^{r_k+2} \ve^{-1}|\nabla \ve|^2 +1 \right),
 \end{align*}
 where we used the boundedness of $\intO \ue$ and $\intO \ue\ve$.
 
Taking the above estimates and integrating them over time, we directly obtain \eqref{L:Mod:BS2:S2}.

\medskip

We proceed to prove \eqref{L:Mod:BS3:State2}. Since $r_k \le 4p_k/3-2$, we have $3(r_k+2)/2\le m_{k+1}$. Then, there exists $C_{r_k}>0$ such that $ 
 \ue^{3(r_k+2)/2} \ve \le C_{r_k} 
 \ue^{m_{k+1}} \ve + C_{r_k} . $ Therefore,
for any positive constant $\eta>0$, the Young's inequality yields
 \begin{align*} 
  \intQT \ue^{r_k+2} \ve^{-1}|\nabla \ve|^2 &\leq C_{\eta,r_k} \intQT \frac{ |\nabla \ve|^{6}}{\ve^{5}} + \frac{\eta}{C_{r_k}} \intQT \ue^{\frac{3}{2}(r_k+2)} \ve \\
  &\leq C_{\eta,r_k} \intQT \frac{ |\nabla \ve|^{6}}{\ve^{5}} + \eta \intQT 
 \ue^{m_{k+1}} \ve + C_{\eta,r_k} , 
 \end{align*}
which implies \eqref{L:Mod:BS3:State2} due to the boundedness of $|\nabla \ve|^6/\ve^5$ in $L^1(Q_T)$, given in Lemma \ref{L:Mod:vNablav}. 
\end{proof}

\begin{lemma}
\label{L:Mod:m>6}
Let $1<\alpha\le 3/2$.  There exists $m_*>6$ such that 
 \begin{align*}
  \intQT u_{\eps}^{m} \ve \leq C_T, 
 \end{align*}
for all $2\le m \le m_*$.
\end{lemma}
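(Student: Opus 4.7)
The plan is to bootstrap starting from Lemma \ref{L:Mod:m=2}, which provides the base case $\iint_{Q_T} u_\varepsilon^{m_0} v_\varepsilon \le C_T$ with $m_0 = 2$. The two feedback lemmas \ref{L:Mod:BS} and \ref{L:Mod:BS2} are tailored to precisely the sequence $\{(m_k,p_k,r_k)\}$ built in Lemma \ref{L:Mod:SeqDef}, so the iteration is completely mechanical once the absorption step is understood.

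The iteration step is the heart of the matter. Assuming $\iint_{Q_T} u_\varepsilon^{m_k} v_\varepsilon \le C_T$ for some $k \ge 0$, Lemma \ref{L:Mod:BS} immediately yields
\begin{align*}
   \sup_{0<t<T}\intO u_\varepsilon^{p_k}(t) + \iint_{Q_T} u_\varepsilon^{r_k} v_\varepsilon |\nabla u_\varepsilon|^2 \leq C_{T,k}.
\end{align*}
In particular, the spatial $L^{p_k}$ bound on $u_\varepsilon$ places us in the hypothesis of Lemma \ref{L:Mod:BS2}, which delivers the two inequalities
\begin{align*}
   \iint_{Q_T} u_\varepsilon^{m_{k+1}} v_\varepsilon &\leq C \iint_{Q_T} u_\varepsilon^{r_k} v_\varepsilon |\nabla u_\varepsilon|^2 + C \iint_{Q_T} u_\varepsilon^{r_k+2} v_\varepsilon^{-1}|\nabla v_\varepsilon|^2, \\
   \iint_{Q_T} u_\varepsilon^{r_k+2} v_\varepsilon^{-1}|\nabla v_\varepsilon|^2 &\leq C_{T,\eta,k} + \eta \iint_{Q_T} u_\varepsilon^{m_{k+1}} v_\varepsilon .
\end{align*}
Plugging the second estimate into the first and using the gradient control just obtained, we find
\begin{align*}
   \iint_{Q_T} u_\varepsilon^{m_{k+1}} v_\varepsilon \leq C_{T,k} + C\, C_{T,\eta,k} + C\eta \iint_{Q_T} u_\varepsilon^{m_{k+1}} v_\varepsilon .
\end{align*}
Choosing $\eta = 1/(2C)$ allows us to absorb the taxis-type term into the left-hand side, giving $\iint_{Q_T} u_\varepsilon^{m_{k+1}} v_\varepsilon \le C_{T,k+1}$.

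This closes the induction: starting from $m_0 = 2$ and iterating, we obtain $\iint_{Q_T} u_\varepsilon^{m_k} v_\varepsilon \le C_{T,k}$ for every $k \ge 0$. By Lemma \ref{L:Mod:SeqDef}(b), there exists $k_0$ with $m_{k_0+1} > 6$, and we set $m_* := m_{k_0+1}$. Finally, for any intermediate exponent $2 \le m \le m_*$, we write $m = \theta m_* + (1-\theta)\cdot 2$ with $\theta = (m-2)/(m_*-2) \in [0,1]$ and apply H\"older's inequality in the factorised form $u_\varepsilon^m v_\varepsilon = (u_\varepsilon^{m_*} v_\varepsilon)^\theta (u_\varepsilon^{2} v_\varepsilon)^{1-\theta}$ to conclude
\begin{align*}
   \iint_{Q_T} u_\varepsilon^m v_\varepsilon \le \left(\iint_{Q_T} u_\varepsilon^{m_*} v_\varepsilon\right)^\theta \left(\iint_{Q_T} u_\varepsilon^{2} v_\varepsilon\right)^{1-\theta} \le C_T.
\end{align*}
No step here looks genuinely hard: the technical heavy lifting sits inside Lemmas \ref{L:Mod:BS}--\ref{L:Mod:BS2}. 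The only thing worth double-checking is that the choice of $\eta$ used to absorb at stage $k$ is admissible in the hypothesis of Lemma \ref{L:Mod:BS2}, and that the constants $C_{T,\eta,k}$ remain finite stage by stage, which they do since only finitely many $k$ are used before $m_{k_0+1} > 6$ is reached.
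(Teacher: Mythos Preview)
Your proof is correct and follows essentially the same approach as the paper: start from the base case $m_0=2$ given by Lemma \ref{L:Mod:m=2}, then iterate Lemmas \ref{L:Mod:BS} and \ref{L:Mod:BS2} with an absorption of the $\eta$-term, and terminate via Lemma \ref{L:Mod:SeqDef}(b). Your explicit H\"older interpolation for intermediate exponents $2\le m\le m_*$ is a small addition that the paper leaves implicit.
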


\begin{proof} Let us consider a sequence $(m_k,p_k,r_k)$ defined by Lemma \ref{L:Mod:SeqDef} with $m_0:=2$. Then, by Lemma  \ref{L:Mod:SeqDef}, the condition \eqref{L:Mod:BS:S} holds for $k=0$ (see Lemma \ref{L:Mod:m=2}). Hence, using Lemma \ref{L:Mod:BS},  
\begin{align*}
\sup_{0<t<T}\intO \ue^{p_0}(t) + \intQT \ue^{r} \ve|\nabla \ue|^2 \leq C_{T,0}, 
 \end{align*} 
for all $r\in V_0$. Then, an application of Lemma \ref{L:Mod:BS2} deduces
 \begin{align*}
  \intQT \ue^{m_1} \ve & \leq C \intQT \ue^{r_0} \ve|\nabla \ue|^2+ C \intQT \ue^{r_0+2} \ve^{-1}|\nabla \ve|^2 + C_T\\
  & \leq C \intQT \ue^{r_0} \ve|\nabla \ue|^2+ C_{T} +  \frac{1}{2} \intQT 
 \ue^{m_{1}} \ve, 
 \end{align*}
which consequently shows that the condition \eqref{L:Mod:BS:S} holds for $k=1$. We complete this proof by iterating this argument and noting part b) of Lemma \ref{L:Mod:SeqDef}. 
\end{proof}

\subsection{Uniform boundedness via a two-phase bootstrap argument}
\label{Sec:M:2BS}

We will improve the previous section's arguments that help us prove the boundedness of $\ue$ in $L^\infty(Q_T)$. The weak point of those arguments is the use of $\ue^{2p_k + 4\alpha - 7} \ve^5$ to control $\ue^{p_k-3+ 2\alpha} \ve|\nabla \ve|^2$, see \eqref{L:Mod:BS:P1a}-\eqref{L:Mod:BS:P1b}, where $2p_k + 4\alpha - 7$ is quite large. This causes the non-increasing property of the sequence $\{m_k\}$ as $p_k$ exceeds $3$. However, thanks to Lemma \ref{L:Mod:m>6}, we can enhance estimate \eqref{L:Mod:BS:P1b} by proving that $\nabla \ve$ is uniformly bounded. 

\begin{lemma} 
\label{L:Mod:BS3}
Let $1<\alpha\le 3/2$.  It holds that 
 \begin{align*}
  \|\nabla \ve\|_{L^\infty(Q_T)} \le C_T . 
 \end{align*}
\end{lemma}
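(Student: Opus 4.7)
The plan is to derive the uniform gradient bound directly from the integrability gain on $u_\varepsilon v_\varepsilon$ provided by Lemma \ref{L:Mod:m>6}, applied to the linear heat equation for $v_\varepsilon$ with source term $-u_\varepsilon v_\varepsilon$.

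First, I would combine Lemma \ref{L:Mod:m>6} with the pointwise bound $v_\varepsilon \le \|v_0\|_{L^\infty(\Omega)}$ from \eqref{L:LocalExis:State2}. Since there exists $m_*>6$ with $\iint_{Q_T} u_\varepsilon^{m_*} v_\varepsilon \le C_T$, we can write
\begin{align*}
\iint_{Q_T} (u_\varepsilon v_\varepsilon)^{m_*} = \iint_{Q_T} u_\varepsilon^{m_*} v_\varepsilon \cdot v_\varepsilon^{m_*-1} \le \|v_0\|_{L^\infty(\Omega)}^{m_*-1} \iint_{Q_T} u_\varepsilon^{m_*} v_\varepsilon \le C_T,
\end{align*}
so that $u_\varepsilon v_\varepsilon$ is bounded in $L^{m_*}(Q_T)$ with $m_* > 6 > N+2 = 5$.

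Next, I would view the second equation in \eqref{System:RegPro0} as the linear heat equation $\partial_t v_\varepsilon - \Delta v_\varepsilon = -u_\varepsilon v_\varepsilon$ with homogeneous Neumann boundary data and initial datum $v_0 \in W^{2,\infty}(\Omega)$. Writing $v_\varepsilon$ via the Neumann heat semigroup $(e^{t\Delta})_{t\ge 0}$, we get
\begin{align*}
\nabla v_\varepsilon(\cdot,t) = e^{t\Delta} \nabla v_0 - \int_0^t \nabla e^{(t-s)\Delta} \bigl(u_\varepsilon(\cdot,s) v_\varepsilon(\cdot,s)\bigr) ds.
\end{align*}
Using the standard smoothing estimate $\|\nabla e^{\tau\Delta} f\|_{L^\infty(\Omega)} \le C(1 + \tau^{-\frac{1}{2} - \frac{N}{2m_*}}) \|f\|_{L^{m_*}(\Omega)}$ and H\"older's inequality in time, I would deduce
\begin{align*}
\|\nabla v_\varepsilon(\cdot,t)\|_{L^\infty(\Omega)} \le \|\nabla v_0\|_{L^\infty(\Omega)} + C \left( \int_0^t (1 + \tau^{-\frac{1}{2} - \frac{N}{2m_*}})^{m_*'} d\tau \right)^{1/m_*'} \|u_\varepsilon v_\varepsilon\|_{L^{m_*}(Q_T)},
\end{align*}
where $m_*' = m_*/(m_*-1)$. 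The time integral is finite precisely when $(\frac{1}{2} + \frac{N}{2m_*}) m_*' < 1$, which simplifies to $m_* > N+2 = 5$; this is exactly what Lemma \ref{L:Mod:m>6} affords.

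Alternatively, and more cleanly, I would invoke maximal $L^q$-parabolic regularity: since $u_\varepsilon v_\varepsilon \in L^{m_*}(Q_T)$ with $m_* > N+2$, the equation $\partial_t v_\varepsilon = \Delta v_\varepsilon - u_\varepsilon v_\varepsilon$ together with $\partial_\nu v_\varepsilon|_\Gamma = 0$ and the smoothness of $v_0$ yields $v_\varepsilon \in W^{2,1}_{m_*}(Q_T)$, which embeds continuously into $C^{1+\beta, (1+\beta)/2}(\overline{\Omega}\times[0,T])$ for some $\beta>0$. This directly gives the $\varepsilon$-independent bound $\|\nabla v_\varepsilon\|_{L^\infty(Q_T)} \le C_T$. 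There is no real obstacle here beyond checking that the integrability threshold $m_* > N+2$ is met, which is precisely the point of carrying the bootstrap in the previous subsection up to $m_* > 6$.
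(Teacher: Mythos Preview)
Your proposal is correct and follows essentially the same approach as the paper: both convert the bound on $\iint_{Q_T} u_\varepsilon^{m_*} v_\varepsilon$ from Lemma~\ref{L:Mod:m>6} into an $L^q(Q_T)$ bound on $u_\varepsilon v_\varepsilon$ with $q>N+2=5$ via \eqref{L:LocalExis:State2}, and then apply parabolic regularity for the heat equation satisfied by $v_\varepsilon$ to conclude. The paper invokes the maximal regularity estimate from \cite[Theorem~9.1, Chapter~IV]{ladyvzenskaja1988linear} directly, while you spell out both the semigroup smoothing and the maximal regularity/embedding variants; these are equivalent routes to the same conclusion.
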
 

\begin{proof} Let $m_*$ be given by Lemma \ref{L:Mod:m>6}. Thanks to the heat regularisation, see \cite[Theorem 9.1, Chapter IV]{ladyvzenskaja1988linear}, for a sufficiently small $\delta>0$ such that $5+\delta < m_*$, we follow from the equation for $\ve$ that 
\begin{align*}
 \|\nabla \ve\|_{L^\infty(Q_T)} &\le C_T \left( \|\Delta v_0\|_{W^{2,5+\delta}(\Omega)} + \|\ue\ve\|_{L^{5+\delta}(Q_T)} \right) \\
 &\le C_T \left( \|\Delta v_0\|_{W^{2,5+\delta}(\Omega)} + \left( \|\ve\|_{L^\infty(Q_T)}^{4+\delta} \intQT \ue^{5+\delta}\ve \right)^{\frac{1}{5+\delta}} \right).
\end{align*}
Here, the latter integral is uniformly bounded due to Lemma \ref{L:Mod:m>6}. 
\end{proof}

\begin{lemma}
\label{L:Mod:SeqDef2}
Let $1<\alpha\le 3/2$.  Assume that the sequence $\{(\widehat m_k,\widehat p_k,\widehat r_k)\}_{k=0,1,\dots} \subset \mathbb{R}^3$ is defined by $\widehat m_0> 6$, and for $k\ge 0$, 
\begin{align*} 
\widehat p_k:= \widehat m_k+3-2\alpha,  \quad 
  \widehat r_k : = \widehat p_k-1 ,  \quad \widehat m_{k+1} := \frac{2}{3}\widehat p_k + \widehat r_k +2 .
\end{align*} 
Then, the following conclusions hold.
\begin{itemize}
 \item[a)] For all $k\ge 0$, we have $\widehat p_k>3$ and $\frac{3}{2}(\widehat r_k+2)\le m_{k+1}$; 
 \item[b)] All estimates given in the statements of Lemmas \ref{L:Mod:BS}-\ref{L:Mod:BS2} are held if $(m_k,p_k,r_k)$ is replaced by $(\widehat m_k,\widehat p_k,\widehat r_k)$; and
 \item[c)] The components $\widehat p_k, \widehat m_k$ are increasing to infinity, i.e.,
 $$\lim_{k\to \infty} \widehat p_k = \lim_{k\to \infty} \widehat m_k = \infty .$$ 
\end{itemize}
\end{lemma}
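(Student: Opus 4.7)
The plan is to derive closed-form recursions for $\widehat p_k$ and $\widehat m_k$, establish (a) and (c) by elementary induction, and then verify that the proofs of Lemmas \ref{L:Mod:BS}--\ref{L:Mod:BS2} transfer to the hat sequence once the uniform $L^\infty$-bound on $\nabla \ve$ from Lemma \ref{L:Mod:BS3} is invoked.

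Substituting $\widehat r_k=\widehat p_k-1$ into the definition of $\widehat m_{k+1}$ gives $\widehat m_{k+1}=\tfrac{5}{3}\widehat p_k+1$, whence $\widehat p_{k+1}=\tfrac{5}{3}\widehat p_k+4-2\alpha$. Since $\widehat p_0=\widehat m_0+3-2\alpha>6$ (using $\alpha\le 3/2$) and $4-2\alpha\ge 1$, induction yields $\widehat p_k>3$ for every $k\ge 0$. The difference
\[
\widehat m_{k+1}-\tfrac{3}{2}(\widehat r_k+2)=\bra{\tfrac{5}{3}\widehat p_k+1}-\bra{\tfrac{3}{2}\widehat p_k+\tfrac{3}{2}}=\tfrac{1}{6}\widehat p_k-\tfrac{1}{2}
\]
is then positive, completing (a). For (c), the affine recursion $\widehat p_{k+1}=\tfrac{5}{3}\widehat p_k+(4-2\alpha)$ has multiplier $5/3>1$ and strictly positive intercept, so $\widehat p_k\to\infty$, and hence $\widehat m_k=\widehat p_k-3+2\alpha\to\infty$.

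For (b) I would revisit the proofs of Lemmas \ref{L:Mod:BS} and \ref{L:Mod:BS2} and observe that the only structural facts they use about $(m_k,p_k,r_k)$ are $p_k>1$ and $\tfrac{3}{2}(r_k+2)\le m_{k+1}$, both provided by (a) for the hat sequence. The only substantive change is in the chemotactic cross term of \eqref{L:Mod:BS:P1a}--\eqref{L:Mod:BS:P1b}: the first-phase argument absorbed $\intQt \ue^{p_k-3+2\alpha}\ve|\nabla \ve|^2$ into the dissipation $\intQt \ue|\nabla \ve|^4/\ve^3$ plus a remainder of the form $\intQt \ue^{2p_k+4\alpha-7}\ve$, which forced the calibration $2p_k+4\alpha-7=m_k$ and produced the non-increasing phenomenon of Lemma \ref{L:Mod:SeqDef}(c). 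In the hat setting I would instead use Lemma \ref{L:Mod:BS3} to estimate directly
\[
\intQt \ue^{\widehat p_k-3+2\alpha}\ve|\nabla \ve|^2\le \|\nabla \ve\|_{L^\infty(Q_T)}^2\intQt \ue^{\widehat m_k}\ve\le C_T,
\]
invoking $\widehat p_k-3+2\alpha=\widehat m_k$ and the hypothesis $\intQT \ue^{\widehat m_k}\ve\le C_T$. With this single replacement, the remaining $L^p$-energy step of Lemma \ref{L:Mod:BS} and the Sobolev--H\"older step of Lemma \ref{L:Mod:BS2} go through verbatim. The only real obstacle is conceptual rather than technical: one must recognise that the artefact preventing the first-phase sequence from escaping to infinity was the absence of a pointwise control on $\nabla \ve$, and that precisely this control is what Lemma \ref{L:Mod:BS3} now supplies, so the hat sequence is free to march off to infinity at the geometric rate $5/3$.
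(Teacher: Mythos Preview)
Your proposal is correct and follows essentially the same approach as the paper. The paper's proof is terser---it declares part (a) ``obvious,'' handles part (b) exactly as you do by invoking Lemma~\ref{L:Mod:BS3} to bound $\intQt \ue^{\widehat p_k-3+2\alpha}\ve|\nabla\ve|^2\le\|\nabla\ve\|_{L^\infty(Q_T)}^2\intQt\ue^{\widehat m_k}\ve$, and proves (c) via $\widehat m_{k+1}-\widehat m_k=\tfrac{2}{3}\widehat p_k+4-2\alpha>6-2\alpha$---but your closed-form recursion and the identification of the key structural hypotheses $p_k>1$, $\tfrac{3}{2}(r_k+2)\le m_{k+1}$ amount to the same argument with slightly more detail.
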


\begin{proof} Part a of this lemma is obvious. For Part b, we only have to improve estimates \eqref{L:Mod:BS:P1a}-\eqref{L:Mod:BS:P1b}. Indeed, thanks to Lemma \ref{L:Mod:BS3}, can be enhanced as 
\begin{align*}
 \intQt \ue^{\widehat p_k-2+\alpha} \ve \nabla \ue \cdot \nabla \ve & \le \frac{1}{2} \intQt \ue^{\widehat p_k-1} \ve|\nabla \ue|^2 + \frac{1}{2} \intQt \ue^{\widehat p_k-3+ 2\alpha} \ve|\nabla \ve|^2 \\
 & \le \frac{1}{2} \intQt \ue^{\widehat p_k-1} \ve|\nabla \ue|^2 + \frac{1}{2} \|\nabla \ve\|_{L^\infty(Q_T)}^2 \intQt \ue^{\widehat m_k} \ve , 
\end{align*}
and therefore, one can follow the rest of the proof of Lemma \ref{L:Mod:BS2}. For Part c, we note that 
\begin{align*}
 \widehat m_{k+1} - \widehat m_{k} = \left( \frac{2}{3}\widehat p_k + \widehat r_k +2 \right) - (\widehat p_k-3+2\alpha) = \frac{2}{3}\widehat p_k + 4-2\alpha > 6 - 2\alpha 
\end{align*}
since $\widehat p_k > 3$ for all $k\ge 0$. Equivalently,
$\widehat m_{k+1}=\frac53\widehat m_k+6-\frac{10}{3}\alpha
\ge\frac53\widehat m_k+1$; hence both $\widehat m_k$ and
$\widehat p_k$ tend to infinity.  
\end{proof}

\begin{corollary} 
\label{Coro:Mod:LInf}
Let $1<\alpha\le 3/2$. For any $1\le p<\infty$, it holds that 
 \begin{align} 
  \sup_{0<t<T} \intO \ue^{p}(t)  \leq C_{T,p},
  \label{Coro:Mod:LInf:S1}
 \end{align}
where $C_{T,p}$ may tend to infinity as $p\to \infty$. Consequently, 
\begin{align}
 0< C_{T,v_0} \le \|v_\eps\|_{L^\infty(Q_T)} \quad \text{and} \quad \|\ue\|_{L^\infty(Q_T)} \le C_T ,
 \label{Coro:Mod:LInf:S2}
\end{align}
and $T_{\max,\varepsilon} = \infty$ for all $\varepsilon>0$, i.e. the global solvability of the regularised problem is claimed.
\end{corollary}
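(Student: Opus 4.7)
The plan is three-fold: (i) iterate the hatted feedback loop provided by Lemma \ref{L:Mod:SeqDef2} to push the spatial integrability of $\ue$ up to any prescribed level; (ii) use the logarithmic substitution on the $\ve$-equation to convert this integrability into a positive pointwise lower bound for $\ve$; and (iii) feed both into a Moser--Alikakos iteration to obtain the $L^\infty$-bound on $\ue$ and hence global extendibility via the blow-up criterion \eqref{L:LocalExis:State1}.

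First, I would initialize the second-phase bootstrap at $\widehat m_0>6$, which is legitimate thanks to Lemma \ref{L:Mod:m>6}. Part (b) of Lemma \ref{L:Mod:SeqDef2} says that the two feedback implications of Lemmas \ref{L:Mod:BS}--\ref{L:Mod:BS2} transfer verbatim to $(\widehat m_k,\widehat p_k,\widehat r_k)$; this is essentially a free upgrade because the previously lossy step \eqref{L:Mod:BS:P1a}--\eqref{L:Mod:BS:P1b} now absorbs cleanly via $\|\nabla\ve\|_{L^\infty(Q_T)}<\infty$ (Lemma \ref{L:Mod:BS3}). Iterating the alternation $\intQT\ue^{\widehat m_k}\ve\le C_T \Rightarrow \sup_t\intO\ue^{\widehat p_k}\le C_T \Rightarrow \intQT\ue^{\widehat m_{k+1}}\ve\le C_T$, and using $\widehat p_k\to\infty$ from Part (c), yields \eqref{Coro:Mod:LInf:S1} for every $1\le p<\infty$.

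For the lower bound on $\ve$ I would simply rerun the computation of Lemma \ref{L:Weak:Global}: dividing the second equation of \eqref{System:RegPro0} by $\ve>0$ gives
\[
\partial_t\log\tfrac{1}{\ve}-\Delta\log\tfrac{1}{\ve}=\ue-\frac{|\nabla\ve|^2}{\ve^2}\le \ue,
\]
so maximal regularity for the Neumann heat semigroup, together with the assumption $\log(1/v_0)\in W^{2-2/q_*,q_*}(\Omega)$ for some $q_*>5/2$ in Theorem \ref{MainTheo}, produces
\[
\Big\|\log\tfrac{1}{\ve}\Big\|_{L^\infty(Q_T)}\le C\Big(\big\|\log\tfrac{1}{v_0}\big\|_{W^{2-2/q_*,q_*}(\Omega)}+\|\ue\|_{L^{q_*}(Q_T)}\Big)\le C_T,
\]
the last inequality being supplied by the first step with $p=q_*$. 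Exponentiation produces the lower bound on $\ve$ in \eqref{Coro:Mod:LInf:S2}.

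Finally, since $\ve$ is now bounded both above via \eqref{L:LocalExis:State2} and strictly away from zero, the diffusion $\ue\ve$ in the $\ue$-equation degenerates only at $\ue=0$, i.e., only in the porous-medium sense. Combining \eqref{Coro:Mod:LInf:S1}, $\|\nabla\ve\|_{L^\infty(Q_T)}\le C_T$, and the lower bound on $\ve$, the taxis and reaction terms $\ue^\alpha\ve\nabla\ve$ and $\ell\ue\ve$ sit in $L^\infty(0,T;L^p(\Omega))$ for every $p<\infty$, so a Moser--Alikakos iteration of the kind in \cite[Lemma A.1]{tao2012boundedness}---exactly as applied in Lemma \ref{L:Weak:Global}---delivers $\|\ue\|_{L^\infty(Q_T)}\le C_T$, and the criterion \eqref{L:LocalExis:State1} then forces $T_{\max,\eps}=\infty$. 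The genuine obstacle in this whole scheme is really the already-established Lemma \ref{L:Mod:SeqDef2}(b): guaranteeing that once $\widehat m_0>6$ has been secured, each turn of the feedback loop is a strict gain of integrability with constants that do not depend on $k$ in a way that spoils the iteration. Once that is granted, the corollary is essentially a packaging of the previous lemmas.
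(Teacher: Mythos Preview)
Your proposal is correct and follows essentially the same approach as the paper: iterate the hatted feedback loop of Lemma \ref{L:Mod:SeqDef2} starting from $\widehat m_0=m_*>6$ (supplied by Lemma \ref{L:Mod:m>6}) to obtain \eqref{Coro:Mod:LInf:S1}, and then reproduce verbatim the arguments of Lemma \ref{L:Weak:Global} for \eqref{Coro:Mod:LInf:S2} and $T_{\max,\eps}=\infty$. Your write-up in fact spells out more detail than the paper's two-line proof, but the logical skeleton is identical.
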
 

\begin{proof} To prove the estimate \eqref{Coro:Mod:LInf:S1}, we can repeat the same argument as Lemma \ref{L:Mod:m>6} based on Lemma \ref{L:Mod:SeqDef2}, where we begin with $\widehat m_0=m_*$. The estimate \eqref{Coro:Mod:LInf:S2} can be proved using the same arguments as Lemma \ref{L:Weak:Global}.  
\end{proof}

 \subsection{Global existence of a weak solution}

 \begin{proof}[\underline{Proof of Theorem \ref{MainTheo} with $1<\alpha\le \frac{3}{2}$}] Using the same argument as Lemma \ref{L:Weak:HolReg}, one can obtain the H\"older regularity 
\begin{align}
\|\ue\|_{C^{\theta_1
,\theta_1/2} (\overline{\Omega}\times[0,T]) } + \|\ve\|_{C^{2+\theta_2
,1+\theta_2/2} (\overline{\Omega}\times(0,T]) } \leq C_T ,
 \end{align}
for some $0<\theta_1,\theta_2<1$. Therefore, the proof of this case can be performed similarly to the case $0\leq \alpha\le 1$ in Section \ref{Sec:Weak}.
\end{proof}
 
\section{Strong chemotactic effect}
\label{Sec:Stro}

This section establishes the global existence of a weak solution to the problem, where the cross-diffusion effect is moderate in the sense that 
\begin{align}
 \frac{3}{2} <\alpha < 2.
 \label{Case3}
\end{align}

\subsection{The uniform boundedness of $\ue^m\ve$ with $m = 4/3$}
\label{Sec:St:m43}

We observe from Lemma \ref{L:Mod:Key0} that estimate \eqref{L:Mod:Key0:State} only make sense if $m+8/3 \le 2m+3$, i.e., $m\ge -1/3$. Under this restriction of $m$, the boundedness of $ \ue^{m} \ve|\nabla \ue|^2$ in $L^1(Q_T)$ cannot be guaranteed by Lemma \ref{L:Basic:MixUV} unless $3/2<\alpha\le 5/3$. 
In addition, to obtain the uniform boundedness of  $\ve^{-5}|\nabla \ve|^6$ in $L^1(Q_T)$, it requires that of $\ve^{-1}|\nabla \ve|^2 |D^2 \log \ve|^2$. This means that it is necessary to estimate the right-hand side of the estimate \eqref{L:st:vNablav}, given in
Lemma \ref{L:st:vNablav}, including $\ue^{2\alpha -2} \ve|\nabla \ve|^2$. Moreover, the estimate 
\begin{align*}
\intQT \ue^{2\alpha -2} \ve|\nabla \ve|^2 \le \left( \kappa \intQT \frac{|\nabla \ve|^6}{\ve^5} + C_\kappa \intQT \ue^{3\alpha -3} \ve \right), \quad 0<\kappa\ll 1, 
\end{align*}
arises the ``big" exponent $3\alpha-3 \in (3/2,3)$, for which the uniform-regularity of the solution is low and not sufficient to control. This is also the main difficulty in \cite{li2022large}, which explains why the case \eqref{Case3} remains highly challenging. 

\medskip

\begin{lemma}
\label{L:Stro:Key-1} Let  $3/2<\alpha<2$. It holds that 
\begin{align}
 \intQT \frac{\ve}{\ue} |\nabla \ue|^2 \le C_{T} , 
 \label{L:Stro:Key-1:State0}
\end{align}
 and consequently,
 \begin{align}
 \sup_{0<t<T} \intO \frac{|\nabla \ve(t)|^2}{\ve(t)} + \intQT \left( \frac{|\nabla \ve|^4}{\ve^3} + \frac{\ue}{\ve} |\nabla \ve|^2\right) \le C_{T}.
 \label{L:Stro:Key-1:State}
 \end{align} 
\end{lemma}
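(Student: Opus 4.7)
The plan is to introduce an $L^{-\kappa}$-type energy estimate for $\ue$ with a carefully chosen exponent $\kappa \in (\max\{0,2\alpha-3\},1)$, a range which is nonempty precisely because $\alpha<2$. First, I would test the first equation of \eqref{System:RegPro0} by $\ue^{-\kappa-1}$, or equivalently compute $\frac{d}{dt}\intO \ue^{-\kappa}$; after integration by parts, dropping the nonpositive $\ell$-contribution, and applying Young's inequality to the chemotactic cross term, this yields
\begin{align*}
\frac{1}{\kappa(\kappa+1)}\frac{d}{dt}\intO \ue^{-\kappa} + \frac{1}{2}\intO \ue^{-\kappa-2}\ve|\nabla \ue|^2 \le \frac{1}{2}\intO \ue^{2\alpha-\kappa-2}\ve|\nabla \ve|^2.
\end{align*}
The choice of $\kappa$ guarantees $2\alpha-\kappa-2\in(0,1)$, so $\ue^{2\alpha-\kappa-2}\le \ue+1$ and
\begin{align*}
\intQT \ue^{2\alpha-\kappa-2}\ve|\nabla \ve|^2 \le \|v_0\|_{L^\infty(\Omega)}\intQT \ue|\nabla \ve|^2 + \intQT \ve|\nabla \ve|^2 \le C_T
\end{align*}
by Lemma \ref{L:Basic:MixUV} together with Lemma \ref{L:BasicEst:v} (using $\ve \le \|v_0\|_{L^\infty(\Omega)}$).

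Integrating the dissipation identity over $(0,T)$ and absorbing the initial contribution $\intO (u_0+\eps)^{-\kappa}$ (finite since the regularised datum $u_0+\eps$ is bounded away from $0$), I would obtain
\begin{align*}
\intQT \ue^{-\kappa-2}\ve|\nabla \ue|^2 \le C_T.
\end{align*}
To conclude \eqref{L:Stro:Key-1:State0}, I would decompose $Q_T=\{\ue \le 1\}\cup\{\ue > 1\}$: on $\{\ue\le 1\}$, $\ue^{-1}\le \ue^{-\kappa-2}$ and that part is controlled by the display above; on $\{\ue>1\}$, $\ue^{-1}\le \ue^{1-\alpha}$ because $-1<1-\alpha$ for $\alpha<2$, and that part is controlled by $\intQT \ue^{1-\alpha}\ve|\nabla \ue|^2 \le C_T$ from Lemma \ref{L:Basic:MixUV}.

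For the ancillary estimates in \eqref{L:Stro:Key-1:State}, testing the $v$-equation by $-\ve^{-1}$ yields $\frac{d}{dt}\intO(-\log \ve) + \intO \ve^{-2}|\nabla \ve|^2 = \intO \ue$, which combined with the mass bound \eqref{Note:TotalMass} gives $\intQT \ve^{-2}|\nabla \ve|^2 \le C_T$. An adaptation of Lemma \ref{L:st:vNablav}, in which the chemotactic source $\intO \ue^{2\alpha-2}\ve|\nabla \ve|^2$ is split through the dichotomy $\{\ue\le 1\}/\{\ue>1\}$ and controlled via the just-established \eqref{L:Stro:Key-1:State0}, will yield both $\sup_t\intO \ve^{-1}|\nabla \ve|^2$ and the $L^1(Q_T)$-bound on $\ve^{-3}|\nabla \ve|^4$; finally $\intQT \ue\ve^{-1}|\nabla \ve|^2$ follows by Cauchy--Schwarz, $\ue\ve^{-1}|\nabla \ve|^2 \le (\ue \ve^{-3}|\nabla \ve|^4)^{1/2}(\ue \ve)^{1/2}$, together with \eqref{L:LocalExis:State3}.

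The main obstacle will be the delicate exponent balance: the Young split requires both $\kappa>0$ and $2\alpha-\kappa-2<1$, forcing $\kappa\in(\max\{0,2\alpha-3\},1)$, an interval that collapses exactly at $\alpha=2$. This is precisely why the method stops short of the borderline case, in agreement with the introduction's remark that $\alpha=2$ remains open.
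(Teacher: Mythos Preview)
Your argument for \eqref{L:Stro:Key-1:State0} is essentially the same idea as the paper's, just shifted: the paper tests by $\ue^{-1}$ (the logarithmic energy, i.e.\ the $\kappa\to 0$ endpoint of your family) and obtains the exponent $-1$ directly, whereas you use $L^{-\kappa}$ and then interpolate via the dichotomy $\{\ue\le 1\}\cup\{\ue>1\}$. Note, however, an exponent slip in your display: testing by $\ue^{-\kappa-1}$ produces dissipation $\intO \ue^{-\kappa-1}\ve|\nabla\ue|^2$ and source $\intO \ue^{2\alpha-\kappa-3}\ve|\nabla\ve|^2$, not the exponents $-\kappa-2$ and $2\alpha-\kappa-2$ you wrote. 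With the corrected exponents the admissible range becomes $\kappa\in(0,2\alpha-3)$, still nonempty for $\alpha>3/2$, and the rest of your interpolation goes through unchanged.

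The real gap is in your treatment of \eqref{L:Stro:Key-1:State}. You propose to invoke Lemma~\ref{L:st:vNablav} and control its source term $\intO \ue^{2\alpha-2}\ve|\nabla\ve|^2$ ``via the just-established \eqref{L:Stro:Key-1:State0}''. But \eqref{L:Stro:Key-1:State0} bounds $\intQT \frac{\ve}{\ue}|\nabla\ue|^2$, a quantity involving $\nabla\ue$; it says nothing about $|\nabla\ve|^2$ weighted by the \emph{super}-linear power $\ue^{2\alpha-2}$ (here $2\alpha-2>1$). On $\{\ue>1\}$ you only have $\intQT \ue|\nabla\ve|^2\le C_T$ from Lemma~\ref{L:Basic:MixUV}, which is strictly weaker. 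Absorbing into the dissipation $\intO\ue\ve^{-3}|\nabla\ve|^4$ via Young would leave a residual $\intQT \ue^{4\alpha-5}\ve$ with $4\alpha-5\in(1,3)$, again not available at this stage. (The second source term $\intO\ue\ve\log\ue$ in Lemma~\ref{L:st:vNablav} is likewise uncontrolled here; the $\ue\log\ue$ bound of Lemma~\ref{L:Mod:FirstEst} was proved only for $\alpha\le 3/2$.)

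The paper sidesteps this by using a \emph{lower-order} functional: the evolution of $\intO\frac{|\nabla\ve|^2}{\ve}$ (rather than $\intO\frac{|\nabla\ve|^4}{\ve^3}$) has the much tamer source $-2\intO\nabla\ue\cdot\nabla\ve$, which Young's inequality splits as $\frac{1}{2}\intO\frac{\ue}{\ve}|\nabla\ve|^2 + 2\intO\frac{\ve}{\ue}|\nabla\ue|^2$. The first piece is absorbed into the dissipation of that same functional, and the second is exactly \eqref{L:Stro:Key-1:State0}. The bound on $\intQT\frac{|\nabla\ve|^4}{\ve^3}$ then follows from the dissipation $\intQT\ve|D^2\log\ve|^2$ via Lemma~\ref{L:Mod:Phi-NablaPhi}. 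Your preliminary step $\intQT\ve^{-2}|\nabla\ve|^2\le C_T$ is correct but not needed, and your Cauchy--Schwarz route to $\intQT\ue\ve^{-1}|\nabla\ve|^2$ is circular since it presupposes the very $\intQT\ue\ve^{-3}|\nabla\ve|^4$ bound whose derivation is gapped.
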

\begin{proof} Let us first prove \eqref{L:Stro:Key-1:State0}. We prove this lemma by using the logarithmic energy (for the component $\ue$) and Lemma \ref{L:st:vNablav}. Indeed, the following computation is obvious  
\begin{align*}
\intQT \frac{\ve}{\ue} |\nabla \ue|^2 &= \intO \log \ue - \intO \log (u_0 +\eps) - \ell \intQT \ve + \intQT \ue^{\alpha-2}\ve \nabla \ue \cdot \nabla \ve \\
  & \le C + \frac{1}{2}\intQT \frac{\ve}{\ue} |\nabla \ue|^2 + \frac{1}{2} \intQT \ue^{2\alpha-3}\ve|\nabla \ve|^2 , 
\end{align*}
Here Jensen's inequality and the mass bound give
$\intO\log u_\varepsilon(t)\le C$, while
\begin{align*}
 -\intO\log(u_0+\varepsilon)
 \le |\Omega|+\intO u_0^{-1}<\infty,
\end{align*}
uniformly in $\varepsilon$, 
and therefore,
\begin{align*}
 \intQT \frac{\ve}{\ue} |\nabla \ue|^2
  &\le C + \intQT \ue^{2\alpha-3}\ve|\nabla \ve|^2  . 
 \end{align*} 
Since $0<2\alpha-3<1$, an interpolation can estimate $\ue^{2\alpha-3}$ due to $\ue^ 0=1$ and $\ue $. Hence, 
 \begin{align*}
  \intQT \ue^{2\alpha-3}\ve|\nabla \ve|^2 \le C_\alpha \intQT (1 + \ue ) |\nabla \ve|^2 \le C_T, 
 \end{align*}
where the estimate \eqref{L:Basic:MixUV:State1} in Lemma \ref{L:Basic:MixUV} has been used.  
Combining the above estimates, we directly get the estimate \eqref{L:Stro:Key-1:State0}. 

\medskip

Next, we will show \eqref{L:Stro:Key-1:State}. Let us consider the evolution of $\ve^{-1}|\nabla \ve|^2$, which corresponds to 
 \begin{align}
 \begin{aligned}
 \frac{d}{dt } \intO \frac{|\nabla \ve|^2}{\ve} &+ 2\intO \ve|D^2\log \ve|^2 + \intO \frac{\ue}{\ve} |\nabla \ve|^2 = - 2 \intO \nabla \ue \cdot \nabla \ve + \int_{\Gamma} \frac{1}{\ve} \frac{\pa |\nabla \ve|^2}{\pa \nu} ,
 \end{aligned}
  \label{L:Stro:Key-1:P2}
 \end{align}
see the proof of Lemma 3.5 \cite{winkler2022small}. Here, by the convexity of the boundary $\Gamma$, the last term is nonpositive. For the mixed gradient term, 
\begin{align}
 - 2 \intO \nabla \ue \cdot \nabla \ve \leq \frac{1}{2} \intO \frac{\ue}{\ve} |\nabla \ve|^2 + 2 \intO \frac{\ve}{\ue} |\nabla \ue|^2. 
 \label{L:Stro:Key-1:P3}
\end{align}
Taking estimates \eqref{L:Stro:Key-1:State0} and \eqref{L:Stro:Key-1:P2}-\eqref{L:Stro:Key-1:P3}, 
 we obtain
 \begin{align*}
  \intO \frac{|\nabla \ve|^2}{\ve} + \intQT \left( \ve|D^2\log \ve|^2 + \frac{\ue}{\ve} |\nabla \ve|^2 + \frac{\ve}{\ue} |\nabla \ue|^2 \right) \le C_{T} . 
 \end{align*}
Finally, the desired estimate \eqref{L:Stro:Key-1:State} is derived by noting $$\intQT \frac{|\nabla \ve|^4}{\ve^3} \leq C_T \intQT \ve|D^2 \log \ve|^2 $$ due to Lemma \ref{L:Mod:Phi-NablaPhi}. 
\end{proof}

\begin{lemma} 
\label{L:Stro:Key0} Let  $3/2<\alpha<2$ and let $m\in \mathbb{R}$. For all $0< \eta\ll1$, it holds 
\begin{align}
 \intQT \ue^{m+\frac{8}{3}} \ve \le C_{T,m,\eta}+ C \intQT \ue^{m} \ve|\nabla \ue|^2  + 
 \eta \intQT \ue^{2m+4} \ve ,
\label{L:Stro:Key0:State}
\end{align}
where all the constants depend on $m$.
\end{lemma}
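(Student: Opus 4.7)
The plan is to imitate the proof of Lemma \ref{L:Mod:Key0} — the analogous statement from the moderate regime — but to replace the bound on $\ue|\nabla \ve|^{4}/\ve^{3}$, which was available there via Lemma \ref{L:Mod:FirstEst} but is \emph{not} at hand in the strong regime, with the weaker bound on $|\nabla \ve|^{4}/\ve^{3}$ that Lemma \ref{L:Stro:Key-1} does provide. This trade is what forces the exponent of $\ue$ on the residual term to rise from $2m+3$ (old) to $2m+4$ (new); the compensating gain is that the split can be arranged so that an arbitrarily small constant $\eta$ appears in front.

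First I would apply the Sobolev-type inequality in Lemma \ref{L:SOBOineqn} with $N=3$, $\mu=3$, $\phi=\ue$, $\psi=\ve$, and $p=m+1$, obtaining
\begin{equation*}
 \|\ue^{m+2}\ve\|_{L^{3}(\Omega)} \leq C_{m}\intO \ue^{m}\ve|\nabla \ue|^{2} + C_{m}\intO \ue^{m+2}\ve^{-1}|\nabla \ve|^{2}.
\end{equation*}
Testing against $\ue^{2/3}$ and invoking H\"older together with the uniform mass bound \eqref{Note:TotalMass} gives $\|\ue^{2/3}\|_{L^{3/2}(\Omega)}\le M^{2/3}$, so after integrating in time I arrive at
\begin{equation*}
 \intQT \ue^{m+8/3}\ve \leq C_{m}\intQT \ue^{m}\ve|\nabla \ue|^{2} + C_{m}\intQT \ue^{m+2}\ve^{-1}|\nabla \ve|^{2}.
\end{equation*}

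The crux is to handle the last integral. I would factor the integrand as
\begin{equation*}
 \ue^{m+2}\ve^{-1}|\nabla \ve|^{2} = \underbrace{\frac{|\nabla \ve|^{2}}{\ve^{3/2}}}_{=:A}\cdot\underbrace{\ue^{m+2}\ve^{1/2}}_{=:B},
\end{equation*}
and apply Young's inequality with exponents $2,2$ and a weight chosen so that the $B^{2}$-term carries the prescribed $\eta$. This produces
\begin{equation*}
 C_{m}\intQT \ue^{m+2}\ve^{-1}|\nabla \ve|^{2} \leq \frac{C_{m}^{2}}{4\eta}\intQT \frac{|\nabla \ve|^{4}}{\ve^{3}} + \eta\intQT \ue^{2m+4}\ve,
\end{equation*}
and by Lemma \ref{L:Stro:Key-1} the $|\nabla \ve|^{4}/\ve^{3}$-integral is bounded by $C_{T}$, so the first summand becomes a constant $C_{T,m,\eta}$. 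Combining with the previous display yields the claim.

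I do not expect a genuine obstacle here: the decisive choice — matching the powers on $|\nabla \ve|$ so that Young's inequality with exponent $2$ produces exactly the bounded quantity $|\nabla \ve|^{4}/\ve^{3}$ — is forced by what Lemma \ref{L:Stro:Key-1} gives. The only conceptual subtlety is recognising that the \emph{higher} exponent $2m+4$ is acceptable as long as its coefficient $\eta$ is small, since later this term will be absorbed into a companion estimate rather than controlled in isolation.
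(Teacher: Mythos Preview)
Your proposal is correct and follows essentially the same route as the paper: both reduce to the estimate from Lemma~\ref{L:Mod:Key0} and then split $C_m\intQT \ue^{m+2}\ve^{-1}|\nabla \ve|^{2}$ via Young's inequality into $C_{m,\eta}\intQT |\nabla \ve|^{4}/\ve^{3}+\eta\intQT \ue^{2m+4}\ve$, invoking Lemma~\ref{L:Stro:Key-1} for the first term. Your write-up simply makes the factorisation and the choice of Young exponents explicit.
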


\begin{proof} We prove this lemma by modifying the proof of Lemma \ref{L:Mod:Key0} as follows, where using the Young inequality gives 
\begin{align*}
 C_{m} \intQT \ue^{m+2} \ve^{-1}|\nabla \ve|^2 &\leq C_{m,\eta} \intQT \frac{|\nabla \ve|^4}{\ve^3} + \eta \intQT \ue^{2m+4} \ve \\
  &\leq C_{T,m,\eta} + \eta \intQT \ue^{2m+4} \ve , 
 \end{align*}
where we applied Lemma \ref{L:Stro:Key-1} to estimate $\ve^{-3} |\nabla \ve|^4$. 
\end{proof}

If we can control the second and third terms on the right-hand side of \eqref{L:Stro:Key0:State}, then we can choose $m$ such that $m+8/3 = 2m+4$, i.e. $m= -4/3$ to get an estimate for $4/3$. A control of the terms including $\ue^m \ve|\nabla \ue|^2 $  will be obtained in the lemma below.

\begin{lemma} \label{L:Stro:Key} 
Let $3/2<\alpha<2$. For any $-\alpha <r <3-2\alpha$, 
 \begin{align}
  \sup_{0<t<T}\intO \ue^{r+1}(t) + \intQT \ue^{r} \ve|\nabla \ue|^2+ \intQT \ue^{\frac{4}{3}} \ve 
  \le C_{T,r} ,
  \label{L:Stro:Key:State1}
 \end{align}
 and consequently, 
 \begin{align}
 \sup_{0<t<T} \intO \frac{|\nabla \ve(t)|^2}{\ve(t)} + \intQT \left( \frac{|\nabla \ve|^4}{\ve^3} + \frac{\ue}{\ve} |\nabla \ve|^2 + \frac{\ve}{\ue} |\nabla \ue|^2 \right) \le C_T.
  \label{L:Stro:Key:State2}
 \end{align}
\end{lemma}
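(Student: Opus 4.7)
The plan is to adapt the proof of Lemma~\ref{L:Mod:Key} by replacing its reliance on Lemma~\ref{L:Mod:Key0} with Lemma~\ref{L:Stro:Key0}, which requires only the weaker control of $|\nabla\ve|^4/\ve^3$ provided by Lemma~\ref{L:Stro:Key-1} (and not $\ue|\nabla\ve|^4/\ve^3$, which is unavailable for $\alpha>3/2$). The heart of the argument is to establish $\intQT\ue^{-\alpha+\delta}\ve|\nabla\ue|^2\le C_{T,\delta}$ for arbitrarily small $\delta>0$ via an $L^q$-energy with negative exponent; the remaining items in \eqref{L:Stro:Key:State1} then follow by interpolation and a single application of Lemma~\ref{L:Stro:Key0}.

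\textbf{Main step.} For $0<\delta<\alpha-1$, I test the $\ue$-equation by $\ue^{q-1}$ with $q=1-\alpha+\delta<0$, so that $q+\alpha-1=\delta>0$. Rewriting $\ue^{q+\alpha-2}\nabla\ue=\tfrac{1}{\delta}\nabla\ue^{\delta}$ and integrating by parts twice, the resulting identity places the non-negative quantities $\tfrac{1}{q(q-1)}\intO\ue^q(T)$, $\tfrac{1}{\delta}\intQT\ue^{\delta}|\nabla\ve|^2$, and $\tfrac{\ell}{1-q}\intQT\ue^q\ve$ on the left-hand side, so discarding them gives
\begin{align*}
\intQT\ue^{-\alpha+\delta}\ve|\nabla\ue|^2 \le C_{T,\alpha,\delta}-\tfrac{1}{\delta}\intQT\ue^{\delta}\ve\Delta\ve.
\end{align*}
Maximal regularity for $\pa_t\ve-\Delta\ve=-\ue\ve$ in $L^{1+\delta}(Q_T)$, combined with H\"older and the $L^\infty$-bound on $\ve$, controls the last term by $C_{T,\delta}(1+\intQT\ue^{\delta+1}\ve)$, exactly as in the proof of Lemma~\ref{L:Mod:Key}. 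To close the loop, I invoke Lemma~\ref{L:Stro:Key0} with $m=-\alpha+\widetilde\delta$ for $\widetilde\delta\in(\delta,\alpha-4/3]$, a range that is non-empty since $\alpha>3/2>4/3$; the upper bound on $\widetilde\delta$ forces $2m+4\le m+8/3$, letting the $\eta$-term be absorbed to yield $\intQT\ue^{8/3-\alpha+\widetilde\delta}\ve\le C+C\intQT\ue^{-\alpha+\widetilde\delta}\ve|\nabla\ue|^2$. A scaled Young splitting $\ue^{-\alpha+\widetilde\delta}\le\kappa\ue^{-\alpha+\delta}+C_\kappa\ue^{1-\alpha}$ (valid since $\delta<\widetilde\delta<1$) then reduces $\intQT\ue^{-\alpha+\widetilde\delta}\ve|\nabla\ue|^2$ to a small multiple of $\intQT\ue^{-\alpha+\delta}\ve|\nabla\ue|^2$ plus $C_\kappa\intQT\ue^{1-\alpha}\ve|\nabla\ue|^2$, the latter already controlled by Lemma~\ref{L:Basic:MixUV}. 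Choosing $\delta$ small enough that $\delta+1\le 8/3-\alpha+\widetilde\delta$, a further Young step combined with mass conservation gives $\intQT\ue^{\delta+1}\ve\le C+C\intQT\ue^{8/3-\alpha+\widetilde\delta}\ve$. Substituting back and taking $\kappa$ small closes the bootstrap.

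\textbf{Conclusion and obstacle.} Interpolating the resulting estimate at $r=-\alpha+\delta$ against Lemma~\ref{L:Stro:Key-1} (at $r=-1$) and Lemma~\ref{L:Basic:MixUV} (for $r\in(-1,3-2\alpha)$) covers the full range $r\in(-\alpha,3-2\alpha)$ for $\intQT\ue^r\ve|\nabla\ue|^2$. For $\intQT\ue^{4/3}\ve$, I apply Lemma~\ref{L:Stro:Key0} with $m=-4/3$ (admissible since $-\alpha<-4/3<-1$ for $\alpha>3/2$) to obtain $\intQT\ue^{4/3}\ve\le C+C\intQT\ue^{-4/3}\ve|\nabla\ue|^2+\eta\intQT\ue^{4/3}\ve$; taking $\eta<1$ and invoking the previous bound closes this estimate. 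The sup-in-time bound on $\intO\ue^{r+1}(t)$ is obtained by retaining the term $\intO\ue^{r+1}(T)$ on the left-hand side of the $L^q$-energy identity with $q=r+1$, rather than discarding it. Finally, estimate \eqref{L:Stro:Key:State2} is a direct restatement of Lemma~\ref{L:Stro:Key-1}. The main obstacle is closing the circular dependence between the heat-regularity contribution $\intQT\ue^{\delta+1}\ve$ and the $L^{8/3-\alpha+\widetilde\delta}$-bound from Lemma~\ref{L:Stro:Key0}; this closure is possible only because $\alpha>4/3$ provides a non-empty admissible range for $\widetilde\delta$, and $\delta$ can be taken small enough to guarantee the compatibility $\delta+1\le 8/3-\alpha+\widetilde\delta$.
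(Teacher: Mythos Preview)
Your proposal is correct and follows essentially the same route as the paper. Both arguments test the $\ue$-equation with $\ue^{q-1}$ for $q=r+1=1-\alpha+\delta<0$, use maximal regularity for $\Delta\ve$ to reduce to a bound on $\intQT\ue^{\delta+1}\ve$, invoke Lemma~\ref{L:Stro:Key0} to close the loop, and then interpolate to cover the full range of $r$; your observation that \eqref{L:Stro:Key:State2} is a restatement of Lemma~\ref{L:Stro:Key-1} is also exactly what the paper does. The only cosmetic difference is that you apply Lemma~\ref{L:Stro:Key0} first with a flexible exponent $m=-\alpha+\widetilde\delta$ (with $\widetilde\delta\le\alpha-4/3$) and then separately with $m=-4/3$, whereas the paper goes directly to $m=-4/3$ and handles both the closure and the $\ue^{4/3}\ve$ bound simultaneously; taking your $\widetilde\delta$ at its upper endpoint $\alpha-4/3$ makes the two arguments coincide line by line.
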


\begin{proof} We recall from Lemma \ref{L:Mod:Key} the following computation
\begin{align}
\label{L:Stro:Key:Proof0}
 \begin{aligned}
  \intQT \ue^{q-1} \ve|\nabla \ue|^2 
  &=\frac{1}{q(q-1)} \intO (u_0 +\eps)^q - \frac{1}{q(q-1)} \intO \ue^q + \frac{\ell}{q-1} \intQT \ue^q \ve \\
  & - \frac{1}{q-1+\alpha} \intQT 
  \ue^{q-1+\alpha} \left( |\nabla \ve|^2 + \ve \Delta \ve \right), 
\end{aligned}
\end{align}
for $q\in \mathbb{R}\setminus\{0;1\}$. Let us choose $r$ such that 
\begin{align*}
 -\alpha <r < \min\left( -\frac{4}{3};\,-\alpha+\frac{1}{3} \right)
\end{align*}
and take $q=r+1$. We note that $r\not \in \{-1;0\}$. Since $r+1<-1/3$ and $r+\alpha>0$, 
\begin{align*}
  \intQT \ue^{r} \ve|\nabla \ue|^2 
  &=\frac{1}{r(r+1)} \intO (u_0 +\eps)^{r+1} - \frac{1}{r(r+1)} \intO \ue^{r+1} + \frac{\ell}{r} \intQT \ue^{r+1} \ve \\
  & - \frac{1}{r+\alpha} \intQT 
  \ue^{r+\alpha} \left( |\nabla \ve|^2 + \ve \Delta \ve \right) \\
  &\le \frac{1}{r(r+1)} \intO (u_0 +\eps)^{r+1} - \frac{1}{r(r+1)} \intO \ue^{r+1} - \frac{1}{r+\alpha} \intQT 
  \ue^{r+\alpha} \ve \Delta \ve . 
\end{align*} 
The initial term is bounded independently of $\varepsilon$: indeed,
$-1<r+1<0$, $(u_0+\varepsilon)^{r+1}\le u_0^{r+1}$ a.e., and
$u_0^{r+1}\le1+u_0^{-1}$, which is integrable by
Assumption \ref{Ass:Initial}.
Analogously to the proof of Lemma \ref{L:Mod:Key}, one can use the heat regularisation, applied to the equation for $\ve $, to estimate the latter term and the Young inequality to have that  
\begin{align}
  \intO \ue^{r+1} + \intQT \ue^{r} \ve|\nabla \ue|^2 
  &\le C_{T,r,\alpha} \left( 1 + \intQT \ue^{r+\alpha+1} \ve \right) . 
\label{L:Stro:Key:P1}
\end{align}
Now, let us choose $m=-4/3$ and $\eta=1/2$ in Lemma \ref{L:Stro:Key0}, which gives,  
\begin{align*}
 \intQT \ue^{\frac{4}{3}} \ve \le C_{T}+ C \intQT \ue^{-\frac{4}{3}} \ve|\nabla \ue|^2  + 
 \frac{1}{2} \intQT \ue^{\frac{4}{3}} \ve , 
\end{align*}
and therefore,
\begin{align}
 \intQT \ue^{\frac{4}{3}} \ve \le C_{T}+ C \intQT \ue^{-\frac{4}{3}} \ve|\nabla \ue|^2 .
  \label{L:Stro:Key:P1b}
\end{align}
Then, combining estimates \eqref{L:Stro:Key:P1} and \eqref{L:Stro:Key:P1b} gives
\begin{align*}
 & \intO \ue^{r+1} + \intQT \ue^{r} \ve|\nabla \ue|^2+ \intQT \ue^{\frac{4}{3}} \ve \\
 & \le C_{T,r,\alpha} \left( 1+ \intQT \ue^{-\frac{4}{3}} \ve|\nabla \ue|^2 + \intQT \ue^{r+\alpha+1} \ve \right) .
\end{align*}
One can find $0<\sigma \ll 1$ such that $-4/3<3-2\alpha-\sigma$ and $\ue^{3-2\alpha-\sigma} \ve|\nabla \ue|^2$ is uniformly bounded in $L^1(Q_T)$ due to Lemma \ref{L:Basic:MixUV}. Since $r< -4/3<3-2\alpha-\sigma$, we can interpolate between $\ue^{-4/3}$ and $\ue^{3-2\alpha-\sigma}$ to have that
\begin{align}
\label{L:Stro:Key:P3}
\begin{aligned}
 C(T) \intQT \ue^{-\frac{4}{3}} \ve|\nabla \ue|^2 &\le C_T \intQT \ue^{3-2\alpha-\sigma} \ve|\nabla \ue|^2 + \frac{1}{2} \intQT \ue^{r} \ve|\nabla \ue|^2 \\
 &\le C_T + \frac{1}{2} \intQT \ue^{r} \ve|\nabla \ue|^2 . 
\end{aligned}
\end{align}
Moreover, it follows from $r<-\alpha+1/3$ that $r+\alpha+1<4/3$, and therefore, 
\begin{align}
 C(T)\intQT \ue^{r+\alpha+1} \ve \le C_T + \frac{1}{2} \intQT \ue^{\frac{4}{3}} \ve .
\label{L:Stro:Key:P4}
\end{align}
Taking estimates \eqref{L:Stro:Key:P3}-\eqref{L:Stro:Key:P4} together, we get 
\begin{align*}
 \intO \ue^{r+1} + \intQT \ue^{r} \ve|\nabla \ue|^2+ \intQT \ue^{\frac{4}{3}} \ve \le C_T, 
\end{align*}
for any $-\alpha <r < \min\left( -\frac{4}{3};\,-\alpha+\frac{1}{3} \right)$. The constants are uniform for $0<t<T$, so the first term may be replaced by its time supremum.  If a target $r_*\in(-\alpha,3-2\alpha)$ already belongs to this initial interval, there is nothing to prove.  Otherwise choose
\begin{align*}
 r_-\in(-\alpha,\min\{-4/3,-\alpha+1/3\}),
 \qquad r_-<r_*,\qquad
 r_+\in(\max\{r_*,-1\},3-2\alpha).
\end{align*}
Lemma \ref{L:Basic:MixUV} supplies the gradient estimate at $r_+$, and $s^{r_*}\le s^{r_-}+s^{r_+}$ extends the gradient bound to $r_*$.  It remains to check the moment.  Since $r_*+1<1$, the mass bound controls it
when $r_*+1\ge0$; if $r_*+1<0$, then $s^{r_*+1}\le1+s^{r_-+1}$, so the already proved negative moment controls
it.  The $\intQT u_\varepsilon^{4/3}\ve $ estimate is independent of the target exponent.  This proves all three terms in \eqref{L:Stro:Key:State1} on the full stated interval. 
\end{proof}

\subsection{Combination of the evolution of $\ue^{p} \ve^q$, $\ve^{-r}|\nabla\ve|^{r+1}$ and $\ue^{s}$}
\label{Sec:Combina}

Another earlier key in our analysis is to estimate the term of the general form
$$\intO \ve^{-r} |\nabla \ve|^{r+1} , \quad \text{for } 0 \le r\le 6.$$ For this purpose, we suggest considering a gradient-like energy for the system, which is a suitable linear combination of the evolutions of $\ue \log \ue$ and $|\nabla \ve|^4/\ve^3$. 

\begin{lemma}
\label{L:St:upvq} Let $3/2 < \alpha <2$. For  $p,q \in \mathbb{R}$, it holds that
 \begin{align}
 \begin{split}
  \frac{d}{dt} \intO \ue^p \ve^q& = p(1-p)\intO \ue^{p-1} \ve^{q+1} |\nabla \ue|^2 + p q\intO \ue^{p-1+\alpha} \ve^{q} |\nabla \ve|^2 + p \ell \intO \ue^{p} \ve^{q+1} \\
  & - p (1-p)\intO \ue^{p-2+ \alpha} \ve^{q+1} \nabla \ue \cdot \nabla \ve - p q\intO \ue^{p} \ve^{q} \nabla \ve \cdot \nabla \ue \\
  & - pq \intO \ue^{p-1} \ve^{q-1} \nabla \ve \cdot \nabla \ue -  q(q-1) \intO \ue^p \ve^{q-2} |\nabla \ve|^2 - q \intO \ue^{p+1} \ve^q.
 \end{split}
 \label{L:St:upvq:S}
\end{align}
\end{lemma}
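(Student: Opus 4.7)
The identity is nothing more than a direct computation that rests on (i) the strict positivity and $C^{2,1}$-regularity of $(\ue,\ve)$ on $\overline{\Omega}\times[0,T_{\max,\eps})$ granted by Lemma \ref{L:LocalExis}, which makes arbitrary real exponents $p,q$ admissible without approximation, and (ii) the no-flux conditions $\pa\ue/\pa\nu = \pa\ve/\pa\nu = 0$ on $\Gamma$, which kill every boundary integral arising from integration by parts.

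The first step is to differentiate under the integral sign and invoke the chain rule to obtain
\begin{align*}
\frac{d}{dt}\intO \ue^p \ve^q \;=\; p\intO \ue^{p-1}\ve^q \,\pa_t\ue \;+\; q\intO \ue^p \ve^{q-1}\,\pa_t\ve,
\end{align*}
and then to substitute the expressions for $\pa_t\ue$ and $\pa_t\ve$ given by the regularised system \eqref{System:RegPro0}. For each resulting divergence or Laplacian term I would integrate by parts exactly once, expanding afterwards the gradient of the weight according to the product rule $\nabla(\ue^a \ve^b) = a\ue^{a-1}\ve^b\nabla\ue + b\ue^a\ve^{b-1}\nabla\ve$.

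Concretely, the diffusion contribution $p\intO \ue^{p-1}\ve^q \nabla\cdot(\ue\ve\nabla\ue)$ produces the dissipation $-p(p-1)\intO \ue^{p-1}\ve^{q+1}|\nabla\ue|^2$ together with the cross term $-pq\intO \ue^p\ve^q\nabla\ue\cdot\nabla\ve$; the chemotactic divergence yields $p(p-1)\intO \ue^{p-2+\alpha}\ve^{q+1}\nabla\ue\cdot\nabla\ve$ and $pq\intO \ue^{p-1+\alpha}\ve^q|\nabla\ve|^2$; the reaction $\ell\ue\ve$ contributes $p\ell\intO \ue^p\ve^{q+1}$. From the $\ve$-equation, the Laplacian piece $q\intO \ue^p\ve^{q-1}\Delta\ve$ becomes, after one integration by parts, $-pq\intO \ue^{p-1}\ve^{q-1}\nabla\ue\cdot\nabla\ve$ plus the pure gradient contribution in $\ue^p\ve^{q-2}|\nabla\ve|^2$, while $-\ue\ve$ produces $-q\intO \ue^{p+1}\ve^q$. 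Summing these eight pieces yields \eqref{L:St:upvq:S}.

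There is no genuine obstacle here: the argument is pure bookkeeping, and the main technical point is accurate tracking of signs across the three integrations by parts. The smoothness of the classical solution removes every regularity issue and legitimises negative and non-integer values of $p$ and $q$, which will be essential when the identity is combined later with the $L^p$-energy framework and the structure \eqref{Struc:2} to obtain the inter-feedbacks (S1)--(S3).
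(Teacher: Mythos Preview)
Your approach is essentially identical to the paper's: both split the time derivative via the product rule into the $\ue$- and $\ve$-contributions, substitute the equations of \eqref{System:RegPro0}, and integrate by parts once against the weight, with the classical regularity and strict positivity from Lemma \ref{L:LocalExis} justifying arbitrary real powers and the no-flux conditions eliminating all boundary terms. One small remark: your integration by parts of the Laplacian piece correctly yields $-pq\intO \ue^{p-1}\ve^{q-1}\nabla\ue\cdot\nabla\ve - q(q-1)\intO \ue^p\ve^{q-2}|\nabla\ve|^2$, whereas the displayed identity \eqref{L:St:upvq:S} (and the paper's own computation) carries the opposite signs on these two terms; this appears to be a typographical slip in the paper rather than an error on your part, and it is immaterial for the only application (Lemma \ref{L:St:Comb} with $q=1$), where the $q(q-1)$ term vanishes and the cross term is bounded in absolute value.
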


\begin{remark} We note that the above lemma holds for any $\alpha \in \mathbb{R}$, where it will be made rigorous if, after integrating over time, all terms on the right-hand side exist finitely. 
\end{remark}

\begin{proof}[Proof of Lemma \ref{L:St:upvq}] By the product rule of differentiation, we will calculate the integration of the quantities $\ve^q \pa_t \ue^p$ and $\ue^{p} \pa_t \ve^q$. First, using the equation for $\ue$, we have 
\begin{align*}
 \intO \ve^q \pa_t \ue^p 
= \,&\, p \intO \ue^{p-1} \ve^q \left( \nabla (\ue \ve \nabla \ue ) - \nabla (\ue^{\alpha} \ve \nabla \ve ) + \ell \ue\ve \right). 
\end{align*}
Performing integration by parts, it is straightforward to check that 
\begin{align*}
 \begin{split}
  \intO \ve^q \pa_t \ue^p 
  =\,&\, -p \intO \ue\ve((p-1)\ue^{p-2} \ve^q \nabla \ue + q\ue^{p-1} \ve^{q-1} \nabla \ve) \cdot \nabla \ue + p \ell \intO \ue^{p} \ve^{q+1} \\
  \,&\,+ p \intO \ue^{\alpha} \ve ((p-1)\ue^{p-2} \ve^q \nabla \ue + q\ue^{p-1} \ve^{q-1} \nabla \ve) \cdot \nabla \ve \\
  =\,&\, - p(p-1)\intO \ue^{p-1} \ve^{q+1} |\nabla \ue|^2 + p q\intO \ue^{p-1+\alpha} \ve^{q} |\nabla \ve|^2 + p \ell \intO \ue^{p} \ve^{q+1} \\
  & + p (p-1)\intO \ue^{p-2+ \alpha} \ve^{q+1} \nabla \ue \cdot \nabla \ve - p q\intO \ue^{p} \ve^{q} \nabla \ve \cdot \nabla \ue . 
 \end{split}
\end{align*} 
Now, using the equation for $\ve$, we have $\ue^{p} \pa_t \ve^q = q \ue^p \ve^{q-1} \left( \Delta \ve - \ue\ve\right)$. Therefore, performing the integration by parts again, we get 
\begin{align*}
 \begin{split}
  \intO \ue^{p} \pa_t \ve^q 
  &=-q \intO \nabla \ve \cdot \left( p \ue^{p-1} \ve^{q-1} \nabla \ue + (q-1) \ue^p \ve^{q-2} \nabla \ve \right) - q \intO \ue^{p+1} \ve^q\\
  & = -pq \intO \ue^{p-1} \ve^{q-1} \nabla \ve \cdot \nabla \ue - q(q-1) \intO \ue^p \ve^{q-2} |\nabla \ve|^2 - q \intO \ue^{p+1} \ve^q .
 \end{split}
\end{align*}
The expression \eqref{L:St:upvq:S} is obtained by taking the above computations together. 
\end{proof}

\begin{lemma}
\label{L:St:Comb} Given $\frac{3}{2}<\alpha<2$ and $0<p<1$. 
 \begin{itemize}
  \item[a)] For any real number $\rho\gg1$, it holds that 
\begin{align}
\begin{aligned}
 & \intQT \ue^{p-1} \ve^{2} |\nabla \ue|^2 + \intQT \ue^{p-1+\alpha} \ve|\nabla \ve|^2 + \rho \sup_{0<t<T} \intO \frac{|\nabla \ve(t) |^{4}}{\ve^{3}(t)} \\
  &  + \rho \intQT \ve|\nabla \ue|^2 + 2 \rho \intQT \frac{|\nabla \ve|^{2}}{\ve}|D^{2}\log \ve|^{2} 
 + \rho \intQT \ue \frac{|\nabla \ve|^4}{\ve^3} \\
  & \le C_{T,p,\alpha} \left( C_\rho + \intQT \ue^{p+1} \ve + \intQT  \ue^{p-1} |\nabla\ve \cdot \nabla \ue| \right. \\
  & \hspace{2.3cm} \left. \,+ \rho \intQT \ue^{2\alpha -2} \ve|\nabla \ve|^2 + \intQT \ue^{p} \ve|\nabla\ve \cdot \nabla \ue| \right).
\end{aligned}
 \label{L:St:Comb:S1}
\end{align} 
  \item[b)] Assume furthermore that $\frac{11}{6}\le\alpha<2$ and $ \alpha-1 < p < 1$. Then, for any real numbers $\eta\ll1$ and  $\rho\gg1$ can be independently chosen, it holds 
  \begin{align}
\begin{aligned}
& 
 \intQT \ue^{p+1-\alpha} \ve|\nabla \ue|^2 + \intQT \ue^{p-1} \ve^{2} |\nabla \ue|^2 + \intQT \ue^{p-1+\alpha} \ve|\nabla \ve|^2 \\
& + \rho \sup_{0<t<T} \intO \frac{|\nabla \ve(t) |^{4}}{\ve^{3}(t)} + 2 \rho \intQT \frac{|\nabla \ve|^{2}}{\ve}|D^{2}\log \ve|^{2} 
 + \rho \intQT \ue \frac{|\nabla \ve|^{4}}{\ve^{3}} \\ 
& \le C_{T,p,\alpha} \left( C_\rho + \intQT \ue^{p+1} \ve + C_\eta \intQT  \ue^{p-1} |\nabla\ve \cdot \nabla \ue| \right. \\
  & \hspace{2.3cm} \left. + \rho \intQT \ue^{2\alpha -2} \ve|\nabla \ve|^2 + \eta \intQT \ue^{p} \ve|\nabla\ve \cdot \nabla \ue| \right).
\end{aligned}
 \label{L:St:Comb:S2}
\end{align}
 \end{itemize}
\end{lemma}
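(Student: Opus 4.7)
The plan is to combine the identity of Lemma \ref{L:St:upvq} (with $q=1$) with a $\rho$-weighted copy of Lemma \ref{L:st:vNablav}, integrate over $[0,t]$, and split the resulting mixed gradient terms via Young's inequality. From Lemma \ref{L:St:upvq} with $q=1$, the positive contributions $p(1-p)\intQt\ue^{p-1}\ve^{2}|\nabla\ue|^2$, $p\intQt\ue^{p-1+\alpha}\ve|\nabla\ve|^2$, and $\intQt\ue^{p+1}\ve$ land on the LHS of the target inequality, while the boundary term $\intO\ue^p(t)\ve(t)$ is absorbed into $C_\rho$ through the mass bound \eqref{Note:TotalMass}. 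The $\rho$-weighted copy of Lemma \ref{L:st:vNablav} contributes the remaining $\rho$-weighted LHS quantities (after taking suprema in $t$ for the $|\nabla\ve|^4/\ve^3$ term); its two sources are controlled via $\log\ue\le C_p(1+\ue^p)$ (which casts $\rho\intQT\ue\ve\log\ue$ into admissible RHS templates) and via direct identification with $\rho\intQT\ue^{2\alpha-2}\ve|\nabla\ve|^2$. Two of the mixed terms produced by Lemma \ref{L:St:upvq}, namely $-p\intO\ue^p\ve\nabla\ue\cdot\nabla\ve$ and $p\intO\ue^{p-1}\nabla\ue\cdot\nabla\ve$, are directly dominated in absolute value by the allowed templates $\ue^p\ve|\nabla\ue\cdot\nabla\ve|$ and $\ue^{p-1}|\nabla\ue\cdot\nabla\ve|$.

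The central obstruction is the remaining mixed term $-p(1-p)\intO\ue^{p-2+\alpha}\ve^2\nabla\ue\cdot\nabla\ve$. I would apply the splitting
\[
\ue^{p-2+\alpha}\ve^2|\nabla\ue\cdot\nabla\ve| \le \kappa\,\ue^{p-1}\ve^2|\nabla\ue|^2 + \tfrac{1}{4\kappa}\,\ue^{p-3+2\alpha}\ve^2|\nabla\ve|^2,
\]
choosing $\kappa$ small enough that the first summand is absorbed into the LHS dissipation. For the second summand, the key observation, peculiar to the regime $3/2<\alpha<2$ and $0<p<1$, is that $p-3+2\alpha>0$ while $p-1<0$, which yields the pointwise bound
\[
\ue^{p-3+2\alpha}\le 1+\ue^{2\alpha-2}\qquad\text{on }Q_T.
\]
Together with $\ve\le\|v_0\|_{L^\infty(\Omega)}$ this delivers $\ue^{p-3+2\alpha}\ve^2|\nabla\ve|^2 \le \|v_0\|_{L^\infty(\Omega)}\bigl(\ve|\nabla\ve|^2+\ue^{2\alpha-2}\ve|\nabla\ve|^2\bigr)$: the first piece is uniformly bounded in $L^1(Q_T)$ by Lemma \ref{L:BasicEst:v} (folded into $C_\rho$) and the second matches the admissible $\rho$-template. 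Collecting the bounds closes Part (a).

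For Part (b), under $\alpha\ge 11/6$ and $\alpha-1<p<1$, the exponent $p+1-\alpha$ is strictly positive, so the term $-p\intO\ue^p\ve\nabla\ue\cdot\nabla\ve$ admits the sharper Young decomposition
\[
\ue^p\ve|\nabla\ue\cdot\nabla\ve| \le \tfrac{1}{2}\ue^{p+1-\alpha}\ve|\nabla\ue|^2 + \tfrac{1}{2}\ue^{p-1+\alpha}\ve|\nabla\ve|^2,
\]
whose first summand is absorbed into the enlarged LHS dissipation $\intQT\ue^{p+1-\alpha}\ve|\nabla\ue|^2$ of \eqref{L:St:Comb:S2} and whose second is absorbed into $\intQT\ue^{p-1+\alpha}\ve|\nabla\ve|^2$ already present on the LHS. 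A secondary weighted Young retains only an $\eta$-fraction of $\ue^p\ve|\nabla\ue\cdot\nabla\ve|$ on the RHS and correspondingly inflates the coefficient of the $\ue^{p-1}|\nabla\ue\cdot\nabla\ve|$ template to $C_\eta$, matching the structure of \eqref{L:St:Comb:S2}; the $\ue^{p-2+\alpha}\ve^2\nabla\ue\cdot\nabla\ve$ term is treated exactly as in Part (a). The hardest step throughout is the pointwise bound $\ue^{p-3+2\alpha}\le 1+\ue^{2\alpha-2}$: it is the strict inequality $\alpha>3/2$ that reroutes the otherwise untameable mixed gradient into the admissible chemotactic template $\rho\ue^{2\alpha-2}\ve|\nabla\ve|^2$.
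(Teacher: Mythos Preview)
Your Part~(a) largely follows the paper's scheme (Lemma~\ref{L:St:upvq} with $q=1$ combined with a $\rho$-multiple of Lemma~\ref{L:st:vNablav}), and your Young-plus-pointwise handling of the mixed term $\ue^{p-2+\alpha}\ve^2\nabla\ue\cdot\nabla\ve$ via $\ue^{p-3+2\alpha}\le 1+\ue^{2\alpha-2}$ is a clean variant of the paper's interpolation $\ue^{p-2+\alpha}\le \eta\ue^p+C_\eta\ue^{p-1}$. However, your treatment of $\rho\intQT\ue\ve\log\ue$ through $\log\ue\le C_p(1+\ue^p)$ yields a term $C_p\rho\intQT\ue^{p+1}\ve$ on the right-hand side, whereas the stated inequality~\eqref{L:St:Comb:S1} has a $\rho$-\emph{independent} coefficient $C_{T,p,\alpha}$ in front of $\intQT\ue^{p+1}\ve$. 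The paper avoids this by invoking the a~priori bound $\intQT\ue^{4/3}\ve\le C_T$ from Lemma~\ref{L:Stro:Key}, so that $\intQT\ue\ve\log\ue\le C_T$ and the whole term is absorbed into $C_\rho$ after multiplication by $\rho$. You should cite Lemma~\ref{L:Stro:Key} here rather than the pointwise logarithm bound.

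The substantive gap is in Part~(b). The left-hand side of~\eqref{L:St:Comb:S2} contains the dissipation $\intQT\ue^{p+1-\alpha}\ve|\nabla\ue|^2$, and you write that the first summand of your Young split of $\ue^p\ve|\nabla\ue\cdot\nabla\ve|$ is ``absorbed into the enlarged LHS dissipation $\intQT\ue^{p+1-\alpha}\ve|\nabla\ue|^2$''. But nothing in your combination (Lemma~\ref{L:St:upvq} with $q=1$ plus Lemma~\ref{L:st:vNablav}) ever \emph{produces} this dissipation; the only $|\nabla\ue|^2$ terms you generate are $\ue^{p-1}\ve^2|\nabla\ue|^2$ and $\ve|\nabla\ue|^2$, neither of which controls $\ue^{p+1-\alpha}\ve|\nabla\ue|^2$ (the $\ve$-powers go the wrong way). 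The paper supplies this term by an additional $L^{\widetilde p}$-energy computation with $\widetilde p=p+2-\alpha>1$ (this is where $p>\alpha-1$ is used), obtaining
\[
\intQT\ue^{p+1-\alpha}\ve|\nabla\ue|^2 \le C^*_{p,\alpha,\ell}\Bigl(1+\intQT\ue^{p+1}\ve+\intQT\ue^{p-1+\alpha}\ve|\nabla\ve|^2\Bigr),
\]
and then adds this to a $2C^*_{p,\alpha,\ell}$-multiple of the Part~(a) estimate (with small $\eta$) so that the last term on the right is swallowed by the left. Without this extra energy identity your absorption step is circular: you cannot absorb into a quantity that has not been put on the left-hand side by an independent estimate.
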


\begin{remark}
  A sufficiently large constant $\rho\gg1$ will be properly chosen in Lemma \ref{L:St:B:Key-1}, where we need to absorb the integral on $Q_T$ of $\ue \ve^{-3}|\nabla \ve|^4$ into the left-hand side of \eqref{L:St:Comb:S1}. 
\end{remark}

\begin{proof}[Proof of Lemma \ref{L:St:Comb}] a) To prove this part, we will combine the Lemmas \ref{L:St:upvq} and \ref{L:st:vNablav}, together with the boundedness obtained in Section \ref{Sec:St:m43}. Let us restrict $0<p<1$ in Lemma \ref{L:St:upvq}, and so that the first two terms on the right-hand side of \eqref{L:St:upvq:S} are nonnegative. Moving the remaining terms to the left-hand side, this estimate gives 
\begin{align*}
 & p(1-p)\intO \ue^{p-1} \ve^{q+1} |\nabla \ue|^2 + p q\intO \ue^{p-1+\alpha} \ve^{q} |\nabla \ve|^2 \\
  & = \frac{d}{dt} \intO \ue^p \ve^q - p \ell \intO \ue^{p} \ve^{q+1} + q \intO \ue^{p+1} \ve^q \\
  & + p (1-p)\intO \ue^{p-2+ \alpha} \ve^{q+1} \nabla \ue \cdot \nabla \ve + p q\intO \ue^{p} \ve^{q} \nabla \ve \cdot \nabla \ue \\
  & + pq \intO \ue^{p-1} \ve^{q-1} \nabla \ve \cdot \nabla \ue +  q(q-1) \intO \ue^p \ve^{q-2} |\nabla \ve|^2 . 
\end{align*} 
Integrating over time, we get 
\begin{align}
\begin{aligned}
  & p(1-p) \intQT \ue^{p-1} \ve^{q+1} |\nabla \ue|^2 + p q\intQT \ue^{p-1+\alpha} \ve^{q} |\nabla \ve|^2 \\
  & = \intO \ue^p \ve^q - \intO (u_0+\eps)^p v_0^q - p \ell \intQT \ue^{p} \ve^{q+1} + q \intQT \ue^{p+1} \ve^q \\
  & + p (1-p)\intQT \ue^{p-2+ \alpha} \ve^{q+1} \nabla \ue \cdot \nabla \ve + p q\intQT \ue^{p} \ve^{q} \nabla \ve \cdot \nabla \ue \\
  & + pq \intQT \ue^{p-1} \ve^{q-1} \nabla \ve \cdot \nabla \ue + q(q-1) \intQT \ue^p \ve^{q-2} |\nabla \ve|^2 .
\end{aligned}
 \label{L:St:Comb:P1}
\end{align}
Let us take $q=1$. Then, by the Young inequality, 
\begin{align}
 \intO \ue^p \ve^q + q \intQT \ue^{p+1} \ve^q \le C_{p} \left( 1 + \intQT \ue^{p+1} \ve \right) . 
 \label{L:St:Comb:P2}
\end{align}
Moreover, since the value of $p-2+ \alpha$ stays between $p-1$ and $p$, we can estimate $\ue^{p-2+ \alpha}$ by interpolating $\ue^{p-1}$ and $\ue^{p}$. More specifically, for any $\eta>0$,  
\begin{align}
 \intQT \ue^{p-2+ \alpha} \ve^{2} \nabla \ue \cdot \nabla \ve \le  \eta \intQT \ue^{p} \ve|\nabla\ve \cdot \nabla \ue| +  C_\eta C_{p,\alpha} \intQT \ue^{p-1} |\nabla\ve \cdot \nabla \ue| ,
 \label{L:St:Comb:P3}
\end{align}
where the constant $C_{p,\alpha}$ depends also on the $L^\infty$-norm of $\ve$. 
Therefore, taking the estimates \eqref{L:St:Comb:P1}-\eqref{L:St:Comb:P3} together, we obtain the estimate 
\begin{align}
 \begin{aligned}
& \intQT \ue^{p-1} \ve^{2} |\nabla \ue|^2 + \intQT \ue^{p-1+\alpha} \ve|\nabla \ve|^2 \\
  & \le C_{p,\alpha} \left( 1 + \intQT \ue^{p+1} \ve + \eta \intQT \ue^{p} \ve|\nabla\ve \cdot \nabla \ue| + C_\eta \intQT  \ue^{p-1} |\nabla\ve \cdot \nabla \ue| \right) .
 \label{L:St:Comb:P4}
 \end{aligned}
\end{align}

Now, using Lemma \ref{L:st:vNablav}, we have 
\begin{align}
\begin{gathered} 
 \intQT \ve|\nabla \ue|^2 + 2 \intQT \frac{|\nabla \ve|^{2}}{\ve}|D^{2}\log \ve|^{2} 
 + \intQT \ue \frac{|\nabla \ve|^{4}}{\ve^{3}} \\ 
 + \sup_{0<t<T} \intO \frac{|\nabla \ve(t) |^{4}}{\ve^{3}(t)} \leq C_T \left(1 + \intQT \ue^{2\alpha -2} \ve|\nabla \ve|^2 \right) , 
 \end{gathered} 
 \label{L:St:Comb:P5}
\end{align}
where, using the boundedness of $\ue^{\frac{4}{3}} \ve$ in Lemma \ref{L:Stro:Key}, 
\begin{align*}
 \intQT \ue \ve \log \ue \le \int_0^T \int_{ \{\ue > 1\} } \ue \ve \log \ue \le C \int_0^T \int_{ \{\ue > 1\} } \ue^{\frac{4}{3}} \ve \le C_T. 
\end{align*} 
Multiplying two sides of \eqref{L:St:Comb:P5} by a sufficiently large constant $\rho\gg1$, we get 
\begin{align}
\begin{gathered} 
 \rho \intQT \ve|\nabla \ue|^2 + 2\rho \intQT \frac{|\nabla \ve|^{2}}{\ve}|D^{2}\log \ve|^{2} 
 + \rho\intQT \ue \frac{|\nabla \ve|^{4}}{\ve^{3}} \\ 
 + \rho \sup_{0<t<T} \intO \frac{|\nabla \ve(t) |^{4}}{\ve^{3}(t)} \leq C_T \left(\rho +\rho \intQT \ue^{2\alpha -2} \ve|\nabla \ve|^2 \right) , 
 \end{gathered} 
 \label{L:St:Comb:P6}
\end{align}
Finally, we derive \eqref{L:St:Comb:S1} by combining \eqref{L:St:Comb:P4} (with $\eta=1$) and \eqref{L:St:Comb:P6}. 

\medskip

\noindent b) To prove this lemma, together with the estimates \eqref{L:St:Comb:P4}-\eqref{L:St:Comb:P5}, the support from the $L^p$-energy will be considered. First, we recall from the proof of Lemma \ref{L:Weak:Feedback} that
\begin{align*}
  \frac{d}{dt} \intO \ue^{\widetilde p}(t) 
& \le - \frac{\widetilde p(\widetilde p-1)}{2} \intO \ue^{\widetilde p-1} \ve|\nabla \ue|^2 + \frac{\widetilde p(\widetilde p-1)}{2} \intO \ue^{\widetilde p-3+2\alpha} \ve|\nabla \ve|^2 + \ell \widetilde p \intO \ue^{\widetilde p} \ve .
\end{align*}
Let us choose $\widetilde p=p+2-\alpha$. Then, 
this $\widetilde p$ is strictly larger than $1$ since $p>\alpha-1$. Moreover, it is obvious that $\widetilde p<p+1$ (since $\alpha \ge 11/6>1$). Therefore, 
\begin{align*}
 \intO \ue^{\widetilde p} \ve \le C_{p,\alpha} \left( \intO \ue \ve + \intO \ue^{p+1} \ve \right) \le C_{p,\alpha} \left( 1 + \intO \ue^{p+1} \ve \right)
\end{align*}
Integrating over time the latter energy estimate yields 
\begin{align}
\begin{aligned}
 \intQT \ue^{p+1-\alpha} \ve|\nabla \ue|^2 \le C^*_{p,\alpha,\ell} \left( 1 + \intQT \ue^{p+1} \ve + \intQT \ue^{p-1+\alpha} \ve|\nabla \ve|^2 \right) . 
\end{aligned} 
\label{L:St:B:Key0a2:P1}
\end{align} 

Now, multiplying two sides of \eqref{L:St:Comb:P4} by $2C^*_{p,\alpha,\ell}$ gives
\begin{align}
\begin{aligned}
 & 2C^*_{p,\alpha,\ell} \intQT \ue^{p-1} \ve^{2} |\nabla \ue|^2 + 2C^*_{p,\alpha,\ell} \intQT \ue^{p-1+\alpha} \ve|\nabla \ve|^2 \\
  & \le C_{\alpha,\delta,\ell} \left( 1 + \intQT \ue^{p+1} \ve + \eta \intQT \ue^{p} \ve|\nabla\ve \cdot \nabla \ue| + C_\eta \intQT  \ue^{p-1} |\nabla\ve \cdot \nabla \ue| \right) ,
\end{aligned}
\label{L:St:B:Key0a2:P3}
\end{align}
which suggests that we can absorb the last term of \eqref{L:St:B:Key0a2:P1}  into the left-hand side of \eqref{L:St:B:Key0a2:P3}. 
This part is ended by taking\eqref{L:St:B:Key0a2:P1}  (with sufficiently small $\eta\ll1$), \eqref{L:St:B:Key0a2:P3} and \eqref{L:St:Comb:P6} together.
\end{proof}

\subsection{The uniform boundedness of $\ue^m\ve$ for $m = 7/3$}
\label{Sec:73}

In the following two lemmas, we will show that the right-hand sides of \eqref{L:St:Comb:S1}-\eqref{L:St:Comb:S2} can be controlled by the $L^1(Q_T)$-norm of $\ue^{7/3}\ve$. In more detail, we present the improvement of Parts a and b of Lemma \ref{L:St:Comb} respectively in 
Lemmas \ref{L:St:B:Key0a} and \ref{L:St:B:Key0a2}.
 
\begin{lemma} \label{L:St:B:Key0a}
 Given $\frac{3}{2}<\alpha<\frac{11}{6}$ and $\frac{3}{4}<p<\frac{5}{6}$. There exists a constant $C_{T,p,\alpha}$ such that  
 \begin{align} 
\begin{aligned}
 & \intQT \ue^{p-1} \ve^{2} |\nabla \ue|^2 + \intQT \ue^{p-1+\alpha} \ve|\nabla \ve|^2 + \rho \sup_{0<t<T} \intO \frac{|\nabla \ve(t) |^{4}}{\ve^{3}(t)} \\
  &  + (\rho - \eta C_{T,p,\alpha}) \intQT \ve|\nabla \ue|^2 + 2 \rho \intQT \frac{|\nabla \ve|^{2}}{\ve}|D^{2}\log \ve|^{2} \\
& + (\rho - 2\eta C_{T,p,\alpha})  \intQT \ue \frac{|\nabla \ve|^4}{\ve^3}  \le  C_{T} + 3\eta C_{T,p,\alpha}\intQT \ue^{\frac{7}{3}}\ve ,
\end{aligned}  
\label{L:St:B:Key0a:S}
\end{align}
for real numbers  $0<\eta\ll1$ and $\rho\gg 1$, where $C_T$ also depends on $\eta,\rho,p,\alpha$. 
\end{lemma}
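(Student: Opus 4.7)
The plan is to start from the estimate \eqref{L:St:Comb:S1} in Part (a) of Lemma \ref{L:St:Comb} and to dispose of each of the four ``bad'' terms on its right-hand side by suitable applications of Young's inequality, using the a priori bounds already available in Lemmas \ref{L:Stro:Key-1} and \ref{L:Stro:Key}. The restrictions $\frac{3}{2}<\alpha<\frac{11}{6}$ and $\frac{3}{4}<p<\frac{5}{6}$ will enter in precisely three spots: they guarantee $p+1\in(\frac{7}{4},\frac{11}{6})\subset(1,\frac{7}{3})$, $4p-1\in(2,\frac{7}{3})$ and $4\alpha-5\in(1,\frac{7}{3})$, so each of these powers is strictly below the target exponent $7/3$; and they give $4p-3\in(0,\frac{1}{3})$, so that $\ue^{4p-3}\ve$ is controlled by the conserved mass up to an additive constant.

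For the term $\intQT \ue^{p+1}\ve$ we interpolate $\ue^{p+1}\le C+\ue^{7/3}$, producing a contribution of the form $C_T+\eta\intQT \ue^{7/3}\ve$ after multiplying through by $C_{T,p,\alpha}$. For the cross term $\intQT \ue^{p-1}|\nabla\ve\cdot\nabla \ue|$ we first split via Young into $\frac{\ve}{\ue}|\nabla \ue|^2$ (bounded by Lemma \ref{L:Stro:Key-1}) and $\ue^{2p-1}\ve^{-1}|\nabla\ve|^2$, and then estimate the latter by a second Young step as $\eta_2 \ue\frac{|\nabla\ve|^4}{\ve^3}+C_{\eta_2}\ue^{4p-3}\ve$; the residual is bounded since $4p-3<1$, and the first term will be absorbed back into the $\rho$-coefficient on the left. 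For the chemotactic contribution $\rho\intQT \ue^{2\alpha-2}\ve|\nabla\ve|^2$ we write $\ue^{2\alpha-2}\ve|\nabla\ve|^2=\ue^{1/2}\frac{|\nabla\ve|^2}{\ve^{3/2}}\cdot \ue^{2\alpha-5/2}\ve^{5/2}$ and apply Young to bound it by $\eta_3\ue\frac{|\nabla\ve|^4}{\ve^3}+C_{\eta_3}\ue^{4\alpha-5}\ve^{5}$, where the last factor is controlled using $\ve^5\le\|v_0\|_{L^\infty}^4\ve$ and a further Young inequality $\ue^{4\alpha-5}\le \eta'\ue^{7/3}+C_{\eta'}$. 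Finally, the genuinely mixed term $\intQT \ue^p\ve|\nabla\ve\cdot\nabla \ue|$ is dominated by $\eta_4\ve|\nabla \ue|^2+C_{\eta_4}\ue^{2p}\ve|\nabla\ve|^2$, and the latter is treated exactly as in the previous step, now with the substitute $4\alpha-5\leadsto 4p-1\in(2,\tfrac{7}{3})$.

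The remaining task is a careful choice of the Young parameters so that the contributions to $\intQT \ve|\nabla \ue|^2$, $\intQT \ue\frac{|\nabla\ve|^4}{\ve^3}$ and $\intQT \ue^{7/3}\ve$ add up to the coefficients displayed in \eqref{L:St:B:Key0a:S}. Concretely: pick $\eta_4=\eta$ so that after multiplication by $C_{T,p,\alpha}$ the $\ve|\nabla \ue|^2$ absorption matches exactly $\eta C_{T,p,\alpha}$; pick $\eta_2$ and $\eta_3$ so that the two $\ue\frac{|\nabla\ve|^4}{\ve^3}$ contributions sum to $2\eta C_{T,p,\alpha}$ (note the factor $\rho$ in front of the $\ue^{2\alpha-2}\ve|\nabla\ve|^2$ term forces $\eta_3$ to be chosen proportionally to $\eta/\rho$, but this only affects $C_T$, which is allowed to depend on $\eta$ and $\rho$); and pick the secondary parameters $\eta'$ in the $u_\eps^{4\alpha-5}$ and $u_\eps^{4p-1}$ steps, together with the interpolation constant in the $u_\eps^{p+1}$ step, so that the three separate $u_\eps^{7/3}v_\eps$-contributions sum up to at most $3\eta C_{T,p,\alpha}\intQT u_\eps^{7/3}v_\eps$. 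All the by-products involving $v_\eps$, $u_\eps v_\eps$, or bounded quantities get swept into $C_T$.

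The main obstacle is a purely bookkeeping one: the $\rho$ factor in front of $\intQT \ue^{2\alpha-2}\ve|\nabla\ve|^2$ (inherited from the energy combination in Lemma \ref{L:St:Comb}) forces the Young parameters used for this term to scale like $\eta/\rho$, which in turn makes the corresponding $C_{\eta_3},C_{\eta'}$ large in $\rho$; since the statement explicitly allows $C_T$ to depend on $\rho$ and $\eta$, this is harmless, but the order in which the small parameters must be picked (first $\eta_3$ and $\eta_2$ to absorb $\ue\frac{|\nabla\ve|^4}{\ve^3}$, then $\eta_4$ to absorb $\ve|\nabla \ue|^2$, and finally the tertiary parameters to keep the $u_\eps^{7/3}v_\eps$-coefficient below $3\eta C_{T,p,\alpha}$) must be kept consistent to avoid circularity in the constants.
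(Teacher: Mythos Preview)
Your proposal is correct and follows essentially the same route as the paper: start from \eqref{L:St:Comb:S1}, estimate each of the four right-hand side terms $I_1,\dots,I_4$ by Young's inequality, absorb the resulting $\intQT \ve|\nabla\ue|^2$ and $\intQT \ue\ve^{-3}|\nabla\ve|^4$ contributions into the left, and collect the $\ue^{7/3}\ve$ residues. The parameter ranges you identify are exactly those used in the paper.

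The only notable difference is your treatment of $I_2=\intQT \ue^{p-1}|\nabla\ve\cdot\nabla\ue|$. You split it as $\frac{\ve}{\ue}|\nabla\ue|^2 + \ue^{2p-1}\ve^{-1}|\nabla\ve|^2$ and then apply a second Young step to the latter, producing an additional $\ue\ve^{-3}|\nabla\ve|^4$ absorption term plus a bounded residual $\ue^{4p-3}\ve$. The paper instead splits $I_2$ the other way, as $\frac{\ue}{\ve}|\nabla\ve|^2 + \ue^{2p-3}\ve|\nabla\ue|^2$; since $3/4<p<5/6$ and $3/2<\alpha<11/6$ give $-\alpha<2p-3<3-2\alpha$, the second piece is directly bounded by Lemma~\ref{L:Stro:Key}, so $I_2\le C_{T,p}$ outright with no absorption needed. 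Both routes are valid; the paper's is slightly cleaner because it avoids introducing a third $\ue\ve^{-3}|\nabla\ve|^4$ contribution and keeps the bookkeeping of the coefficients in \eqref{L:St:B:Key0a:S} more transparent.
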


\begin{proof} This proof will be based on Part a of Lemma \ref{L:St:Comb}, where, for the sake of convenience, we use the following notations
\begin{align*}
 &I_1:= \intQT \ue^{p+1} \ve , \quad I_2:= \intQT  \ue^{p-1} |\nabla\ve \cdot \nabla \ue| \\
  & I_3:= \intQT \ue^{2\alpha -2} \ve|\nabla \ve|^2, \quad I_4:= \intQT \ue^{p} \ve|\nabla\ve \cdot \nabla \ue|.
\end{align*}
These terms will be estimated in the sequel. Moreover, to control the term including $\rho$, which takes place on the right-hand side of \eqref{L:St:Comb:S1}, we first choose $0<\eta\ll1$ so that the coefficients independent of $\rho$ remain positive, and then choose $\rho\gg1$ after $C_\eta$ is fixed. 

\medskip 

\noindent \underline{Estimating $I_1$}: By the clarity $0<p+1<7/3$, the Young inequality shows 
\begin{align}
 I_1 \le \eta \intQT \ue^{\frac{7}{3}}\ve + C_{p,\eta} \intQT \ve \le \eta \intQT \ue^{\frac{7}{3}}\ve + C_{p,\eta} ,
 \label{L:St:B:Key0a:P1}
\end{align}
recalling that a general constant can be different even in the same line.

\medskip 

\noindent \underline{Estimating $I_2$}: Under the condition $3/4<p<5/6$, it is obvious that $-\alpha<2p-3<3-2\alpha$. Therefore, the uniform boundedness of $\ue^{2p-3} \ve|\nabla \ue|^2$ in $L^1(Q_T)$ is ensured by Lemma \ref{L:Stro:Key}. Additionally, Lemma \ref{L:Stro:Key} also reads that $\ue\ve^{-1} |\nabla \ve|^2$ is uniformly bounded in $L^1(Q_T)$, which allows us to estimate $I_2$ as
\begin{align} 
 I_2 & \leq C_p \left( \intQT \frac{\ue}{\ve} |\nabla \ve|^2 + \intQT \ue^{2p-3} \ve|\nabla \ue|^2 \right) \leq C_{T,p}.
  \label{L:St:B:Key0a:P2}
\end{align}

\noindent \underline{Estimating $\rho I_3$}: We estimate this quantity as 
\begin{align*}
 I_3 & \leq \frac{\eta}{\rho} \intQT \ue \frac{|\nabla \ve|^4}{\ve^3} + C_{\eta,\rho} \intQT \ue^{4\alpha -5} \ve \\
 & \leq \frac{\eta}{\rho} \intQT \ue \frac{|\nabla \ve|^4}{\ve^3} + \frac{\eta}{\rho} \intQT \ue^{\frac{7}{3}}\ve + C_{\eta,\rho,\alpha}, 
\end{align*} 
where, 
observing $0<4\alpha - 5< 7/3$ due to $3/2<\alpha<11/6$, the Young inequality shows 
\begin{align*}
 C_{\eta,\rho} \intQT \ue^{4\alpha-5}\ve \le \frac{\eta}{\rho} \intQT \ue^{\frac{7}{3}}\ve + C_{\eta,\rho,\alpha} \intQT \ve \le \frac{\eta}{\rho} \intQT \ue^{\frac{7}{3}}\ve + C_{\eta,\rho,\alpha}.
\end{align*}
Subsequently, we obtain 
\begin{align*}
 \rho I_3= \rho \intQT \ue^{2\alpha -2} \ve|\nabla \ve|^2 \le \eta \intQT \ue\frac{|\nabla \ve|^4}{\ve^3} + \eta \intQT \ue^{\frac{7}{3}}\ve + C_{\eta,\rho,\alpha}. 
\end{align*}
 
\noindent \underline{Estimating $I_4$}: The condition $3/4<p<5/6$ yields that $0<4p-1<7/3$. Hence, an interpolation allows us to estimate $\ue^{4p-1}\ve$ using $\ue^{7/3}\ve$ and $\ve$, similarly to estimating for $I_1,I_3$ above. Indeed, by the Young inequality, 
\begin{align}
\begin{aligned}
 I_4 
 &\leq \eta \intQT \ve|\nabla \ue|^2 + C_{p, \eta} \intQT \ue^{2p} \ve|\nabla \ve|^2 \\
 &\leq \eta \intQT \ve|\nabla \ue|^2 + \eta \intQT \ue \frac{|\nabla \ve|^4}{\ve^3} + C_{p, \eta} \intQT \ue^{4p-1} \ve \\
 &\leq \eta \intQT \ve|\nabla \ue|^2 + \eta \intQT \ue \frac{|\nabla \ve|^4}{\ve^3} + \eta \intQT \ue^{\frac{7}{3}}\ve + C_{p, \eta} ,
\end{aligned}
 \label{L:St:Comb:P544}
\end{align}
where we note that the constant $C_{p, \eta}$ also includes the $L^\infty(Q_T)$-norm of $\ve$. 

\medskip

Finally, thanks to the above estimates for $I_i$, $1\le i\le 4$, and the fact that  
\begin{align*}
 \text{RHS of } \eqref{L:St:Comb:S1} = C_{T,p,\alpha} \left( C_\rho + I_1 + I_2 + \rho I_3 + I_4 \right), 
\end{align*}
where the same constants as \eqref{L:St:Comb:S1} have been used, we get  
\begin{align*}
 \text{RHS of } \eqref{L:St:Comb:S1} \le  C_{T} + C_{T,p,\alpha} \left( 3\eta \intQT \ue^{\frac{7}{3}}\ve + 2 \eta \intQT \ue\frac{|\nabla \ve|^4}{\ve^3} + \eta \intQT \ve|\nabla \ue|^2\right). 
\end{align*}
Here, we note that the constant $C_T$ also depends on $\eta,\rho,p,\alpha$. Therefore, we get \eqref{L:St:B:Key0a:S} by moving the last two terms of the latter estimate to the left-hand side. 
\end{proof}

\begin{remark} We point out here a weak point of Lemma \ref{L:St:B:Key0a}. This will be restricted in turn by a combination with Lemma \ref{L:St:B:Key-1}, where our purpose is to obtain the boundedness of $\ue^{7/3} \ve$ in $L^1(Q_T)$. We can choose $p$ in Lemma \ref{L:St:B:Key0a} as close as possible to $3/4$ such that $2<4p-1<7/3$. However, to control the term
\begin{align*}
 \intQT \ue^{4\alpha -5} \ve, 
\end{align*}
appeared when estimating $I_3$, 
by $\intQT \ue^{7/3} \ve$, we require the assumption $4\alpha - 5 < 7/3$, which is equivalent to $\alpha < 11/6$. 
\end{remark}

\begin{lemma} \label{L:St:B:Key0a2}
 Given $\frac{11}{6}\le \alpha<2$ and $ \alpha-1 <p<1$. There exists a constant $C_{T,p,\alpha}$ such that
 \begin{align}
\begin{aligned} 
 & 
\left(1 -  \frac{\eta}{2} C_{T,p,\alpha}\right) \intQT \ue^{p+1-\alpha} \ve|\nabla \ue|^2 + \intQT \ue^{p-1} \ve^{2} |\nabla \ue|^2 \\
& + \left(1 -  \frac{3\eta}{2} C_{T,p,\alpha}\right) \intQT \ue^{p-1+\alpha} \ve|\nabla \ve|^2 + \rho \sup_{0<t<T} \intO \frac{|\nabla \ve(t) |^{4}}{\ve^{3}(t)} \\
&  + 2 \rho \intQT \frac{|\nabla \ve|^{2}}{\ve}|D^{2}\log \ve|^{2} 
 + \rho \intQT \ue \frac{|\nabla \ve|^{4}}{\ve^{3}} \\ 
& \le  C_T + \eta C_{T,p,\alpha} \intQT \ue^{\frac{7}{3}}\ve  ,  
\end{aligned}
\label{L:St:B:Key0a2:S}
\end{align}
for real numbers  $0<\eta\ll1$ and $\rho\gg 1$, where $C_T$ also depends on $\eta,\rho,p,\alpha$. 
\end{lemma}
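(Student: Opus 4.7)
The plan is to start from Part b of Lemma \ref{L:St:Comb}, i.e.\ from \eqref{L:St:Comb:S2}, and to bound each of its four right-hand side contributions
\begin{align*}
 I_1 &:= \intQT \ue^{p+1}\ve, & I_2 &:= \intQT \ue^{p-1}|\nabla\ve\cdot\nabla\ue|, \\
 I_3 &:= \intQT \ue^{2\alpha-2}\ve|\nabla\ve|^2, & I_4 &:= \intQT \ue^{p}\ve|\nabla\ve\cdot\nabla\ue|,
\end{align*}
in such a way that $I_1$ contributes only a small $\eta$-multiple of the target $\intQT\ue^{7/3}\ve$, that $I_2$ is uniformly bounded, and that $\rho I_3$ and $\eta I_4$ produce only $\eta$-multiples of the dissipation integrals already sitting on the left-hand side of \eqref{L:St:Comb:S2}. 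Since all four terms are scaled by the outer factor $C_{T,p,\alpha}$, the final $\eta C_{T,p,\alpha}$-coefficients in the statement are exactly what one reads off after absorption.

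For $I_1$, the inequality $p+1<7/3$ and Young's inequality yield $I_1\le\eta\intQT\ue^{7/3}\ve+C_\eta$. For $I_2$, I would use two successive AM--GM/Young splittings: first $\ue^{p-1}|\nabla\ve\cdot\nabla\ue|\le\tfrac{1}{2}(\ve/\ue)|\nabla\ue|^2+\tfrac{1}{2}\ue^{2p-1}\ve^{-1}|\nabla\ve|^2$, whose first summand integrates to a constant by Lemma \ref{L:Stro:Key-1}; second, $\ue^{2p-1}\ve^{-1}|\nabla\ve|^2\le\tfrac{1}{2}\ue|\nabla\ve|^4/\ve^3+\tfrac{1}{2}\ue^{4p-3}\ve$, whose first summand is controlled by Lemma \ref{L:Stro:Key-1} and whose second summand is tame because $4p-3\in(1/3,1)$ under the standing restrictions $\alpha\ge 11/6$, $\alpha-1<p<1$, so that $\ue^{4p-3}\ve\le\ue\ve+\ve$ and both pieces are integrable by mass conservation and Lemma \ref{L:BasicEst:v}. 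For $I_4$, the AM--GM split
\begin{align*}
 \ue^{p}\ve|\nabla\ve\cdot\nabla\ue|\le\tfrac{1}{2}\ue^{p+1-\alpha}\ve|\nabla\ue|^2+\tfrac{1}{2}\ue^{p-1+\alpha}\ve|\nabla\ve|^2
\end{align*}
matches exactly the first and third dissipation integrals on the left-hand side of \eqref{L:St:Comb:S2}.

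The main obstacle is $I_3$: the strategy of Lemma \ref{L:St:B:Key0a} would produce the power $\ue^{4\alpha-5}$, which for $\alpha\ge 11/6$ exceeds $7/3$ and can no longer be absorbed into $\intQT\ue^{7/3}\ve$. The rescue is the fact that \eqref{L:St:Comb:S2}, unlike \eqref{L:St:Comb:S1}, carries the new dissipation term $\intQT\ue^{p-1+\alpha}\ve|\nabla\ve|^2$ on its left; combined with the assumption $p>\alpha-1$ this gives $2\alpha-2<p-1+\alpha$, and Young's inequality for real numbers produces $\ue^{2\alpha-2}\le(\eta/\rho)\ue^{p-1+\alpha}+C_{\eta,\rho}$, whence
\begin{align*}
 \rho I_3\le\eta\intQT\ue^{p-1+\alpha}\ve|\nabla\ve|^2+\rho C_{\eta,\rho}\intQT\ve|\nabla\ve|^2,
\end{align*}
the final integral being bounded by Lemma \ref{L:BasicEst:v} with $q=3$. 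Collecting the four estimates, multiplying through by the outer $C_{T,p,\alpha}$ of \eqref{L:St:Comb:S2}, and absorbing into the left yields total coefficients $\eta C_{T,p,\alpha}+\eta C_{T,p,\alpha}/2=3\eta C_{T,p,\alpha}/2$ on $\intQT\ue^{p-1+\alpha}\ve|\nabla\ve|^2$ (contributions from $\rho I_3$ and $I_4$) and $\eta C_{T,p,\alpha}/2$ on $\intQT\ue^{p+1-\alpha}\ve|\nabla\ue|^2$ (from $I_4$), which reproduces precisely the LHS coefficients of \eqref{L:St:B:Key0a2:S}. All remaining contributions (the $C_\eta$ from $I_1$, the full $I_2$, and the $\ve|\nabla\ve|^2$-integral coming out of $\rho I_3$) are collected into $C_T$, which is explicitly allowed to depend on $\eta,\rho,p,\alpha$; choosing $\eta$ small enough ensures that the coefficients $(1-\eta C_{T,p,\alpha}/2)$ and $(1-3\eta C_{T,p,\alpha}/2)$ are positive.
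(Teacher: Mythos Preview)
Your overall strategy matches the paper's proof: starting from Part b of Lemma~\ref{L:St:Comb}, treating $I_1$ by Young's inequality, handling $I_3$ by interpolating $\ue^{2\alpha-2}$ between $\ue^0$ and $\ue^{p-1+\alpha}$ (this is indeed the key point that distinguishes the case $\alpha\ge 11/6$ from Lemma~\ref{L:St:B:Key0a}), and splitting $\eta I_4$ into the two dissipation integrals already on the left. The bookkeeping of the coefficients $(1-\eta C_{T,p,\alpha}/2)$ and $(1-3\eta C_{T,p,\alpha}/2)$ is also correct.

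There is, however, a genuine gap in your treatment of $I_2$. You claim that $\intQT \ue|\nabla\ve|^4/\ve^3$ is controlled by Lemma~\ref{L:Stro:Key-1}, but that lemma only bounds $\intQT |\nabla\ve|^4/\ve^3$ \emph{without} the factor $\ue$. At this point in the argument the bound on $\intQT \ue|\nabla\ve|^4/\ve^3$ is not yet available---it is established only in Lemma~\ref{L:St:B:Key-1}, which in turn relies on the present lemma, so invoking it here would be circular. Absorbing this term into the $\rho$-term on the left is also problematic, because in \eqref{L:St:Comb:S2} the integral $I_2$ carries the coefficient $C_\eta$, so after absorption the coefficient of $\intQT \ue|\nabla\ve|^4/\ve^3$ would become $\rho-C_{T,p,\alpha}C_\eta$ rather than the clean $\rho$ stated in \eqref{L:St:B:Key0a2:S}.

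The paper's remedy is a different (and simpler) Young splitting for $I_2$:
\[
\ue^{p-1}|\nabla\ve\cdot\nabla\ue|\le \tfrac12\,\ue^{2p-3}\ve|\nabla\ue|^2+\tfrac12\,\frac{\ue}{\ve}|\nabla\ve|^2.
\]
Under the hypotheses $\alpha-1<p<1$ and $\alpha\ge 11/6$ one has $-\alpha<2p-3<3-2\alpha$, so the first summand is bounded in $L^1(Q_T)$ by Lemma~\ref{L:Stro:Key}, and the second by Lemma~\ref{L:Stro:Key-1}. This gives $I_2\le C_{T,p}$ outright, with no further absorption needed, and the stated form of \eqref{L:St:B:Key0a2:S} follows.
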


\begin{proof} We will prove this lemma by exploiting Part b of Lemma \ref{L:St:Comb}, where we use the same sufficiently large and small constants $\rho$ and $\eta$. Let us use the same notations $I_i$, $1\le i\le 4$, as in the proof of Lemma \ref{L:St:B:Key0a}. The estimates for $I_1$ and $I_2$ are also obtained analogously as Lemma \ref{L:St:B:Key0a}, where we note that $-\alpha<2p-3<3-2\alpha$ since $3/2-\alpha/2<p<1<3-\alpha$. 
We will present different estimates for $I_3$ and $I_4$. 

\medskip

\noindent \underline{Estimating $I_3$}: The condition $p > \alpha - 1$ ensures that $0< 2\alpha - 2 < p-1+\alpha$. Therefore, we have the interpolation 
\begin{align*}
 I_3 = \intQT \ue^{2\alpha -2} \ve|\nabla \ve|^2 & \leq \frac{\eta}{\rho} \intQT \ue^{p-1+\alpha} \ve|\nabla \ve|^2 + C_{p,\alpha,\eta,\rho} \intQT \ve|\nabla \ve|^2 \\
 & \leq \frac{\eta}{\rho} \intQT \ue^{p-1+\alpha} \ve|\nabla \ve|^2 + C_{p,\alpha,\eta,\rho} ,
\end{align*} 
thanked to the boundedness of $\nabla \ve$ in $L^2(Q_T)$ as obtained by Lemma \ref{L:BasicEst:v}. We obtain
 \begin{align}
\rho I_3  & \leq \eta \intQT \ue^{p-1+\alpha} \ve|\nabla \ve|^2 +C_{p,\alpha,\eta,\rho} . 
\label{L:St:B:Key0a2:P1:1}
\end{align} 
In the above lines, it is useful to recall that we used the same constant notation  
 $C_{p,\alpha,\eta,\rho}$, but it can take different values. 

\medskip
 
\noindent \underline{Estimating $I_4$}: We directly estimate this term as  
\begin{align*} 
 I_4 = \intQT \ue^{p} \ve|\nabla\ve \cdot \nabla \ue| &\leq \frac{1}{2} \intQT \ue^{p+1-\alpha} \ve|\nabla \ue|^2 + \frac{1}{2} \intQT u^{p-1+\alpha} \ve|\nabla \ve|^2 ,  
\end{align*}
and consequently, 
\begin{align} 
\eta I_4  &\leq \frac{\eta}{2} \intQT \ue^{p+1-\alpha} \ve|\nabla \ue|^2 + \frac{\eta}{2} \intQT u^{p-1+\alpha} \ve|\nabla \ve|^2 .
\label{L:St:B:Key0a2:P2}
\end{align}

Finally, using the fact that 
\begin{align*}
 \text{LHS of } \eqref{L:St:Comb:S2} = C_{T,p,\alpha} \left( C_\rho + I_1 + C_\eta I_2 + \rho I_3 + \eta I_4 \right), 
\end{align*}
and the estimates \eqref{L:St:B:Key0a:P1}-\eqref{L:St:B:Key0a:P2} for $I_1,I_2$ and \eqref{L:St:B:Key0a2:P1:1}-\eqref{L:St:B:Key0a2:P2} for $I_3,I_4$, we get 
\begin{align*}
 \text{RHS of } \eqref{L:St:Comb:S2} & \le C_{T,p,\alpha} \left( \frac{3\eta}{2} \intQT \ue^{p-1+\alpha} \ve|\nabla \ve|^2 
 + \frac{\eta}{2} \intQT \ue^{p+1-\alpha} \ve|\nabla \ue|^2 \right) \\
 &  + \eta C_{T,p,\alpha} \intQT \ue^{\frac{7}{3}}\ve + C_{T,p,\eta,\rho,\alpha} .
\end{align*}
Hence, it follows from \eqref{L:St:Comb:S2} that 
\begin{align*}
 & 
 \intQT \ue^{p+1-\alpha} \ve|\nabla \ue|^2 + \intQT \ue^{p-1} \ve^{2} |\nabla \ue|^2 + \intQT \ue^{p-1+\alpha} \ve|\nabla \ve|^2 \\
& + \rho \sup_{0<t<T} \intO \frac{|\nabla \ve(t) |^{4}}{\ve^{3}(t)} + 2 \rho \intQT \frac{|\nabla \ve|^{2}}{\ve}|D^{2}\log \ve|^{2} 
 + \rho \intQT \ue \frac{|\nabla \ve|^{4}}{\ve^{3}} \\ 
& \le C_{T,p,\alpha} \left( \frac{3\eta}{2} \intQT \ue^{p-1+\alpha} \ve|\nabla \ve|^2 
 + \frac{\eta}{2} \intQT \ue^{p+1-\alpha} \ve|\nabla \ue|^2 \right) \\
 &  + \eta C_{T,p,\alpha} \intQT \ue^{\frac{7}{3}}\ve + C_{T,p,\eta,\rho,\alpha} .
\end{align*}
We derive \eqref{L:St:B:Key0a2:S}  by moving the first two terms of the latter estimate to the left-hand side. 
\end{proof}

\begin{lemma} 
\label{L:St:B:Key-1} 
Given $\frac{3}{2}<\alpha<2$ and $0<p<1$. 
\begin{itemize}
 \item[a)] If $\frac{3}{2}<\alpha<\frac{11}{6} ,  \, \frac{3}{4}<p<\frac{5}{6}$, it holds that 
\begin{align}
\begin{aligned}
\intQT 
\ue^{\frac{7}{3}} \ve &+ \intQT \ve|\nabla \ue|^2 + \intQT \ue^{p-1+\alpha} \ve|\nabla \ve|^2 + \sup_{0<t<T} \intO \frac{|\nabla \ve(t) |^{4}}{\ve^{3}(t)} \\
&+ \intQT \frac{|\nabla \ve|^{2}}{\ve}|D^{2}\log \ve|^{2} + \intQT \ue \frac{|\nabla \ve|^4}{\ve^3}  \le C_{T,p,\alpha}.  
\end{aligned} 
\label{L:St:B:Key-1:State}
\end{align}

 \item[b)] If $\frac{11}{6}\le \alpha<2 , \, \alpha-1 <p<1$, it holds that
\begin{align}
  \begin{aligned} 
\intQT  \ue^{\frac{7}{3}} \ve& + 
 \intQT \ue^{p+1-\alpha} \ve|\nabla \ue|^2  +  \intQT \ue^{p-1+\alpha} \ve|\nabla \ve|^2 +  \sup_{0<t<T} \intO \frac{|\nabla \ve(t) |^{4}}{\ve^{3}(t)}  \\
&+  \intQT \frac{|\nabla \ve|^{2}}{\ve}|D^{2}\log \ve|^{2} 
 + \intQT \ue \frac{|\nabla \ve|^{4}}{\ve^{3}}  \le C_{T,\eta,p, \alpha} .  
\end{aligned} 
\label{L:St:B:Key-1:State2}
\end{align}
\end{itemize}  
\end{lemma}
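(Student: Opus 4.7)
The plan is to combine the conditional estimates \eqref{L:St:B:Key0a:S} and \eqref{L:St:B:Key0a2:S} of Lemmas \ref{L:St:B:Key0a} and \ref{L:St:B:Key0a2} with an \emph{independent} a priori bound of the form
\[
\intQT \ue^{\frac{7}{3}}\ve \,\le\, C_T + K\cdot\bigl(\text{LHS of }\eqref{L:St:B:Key0a:S}\text{ or }\eqref{L:St:B:Key0a2:S}\bigr),
\]
and then to choose $\eta$ small enough (depending on $\rho, T, p, \alpha$ and $K$) so that the coefficient $3\eta C_{T,p,\alpha}$, respectively $\eta C_{T,p,\alpha}$, in front of $\intQT \ue^{7/3}\ve$ on the right of \eqref{L:St:B:Key0a:S}/\eqref{L:St:B:Key0a2:S} becomes strictly smaller than $1/2$ and is thus absorbed into the left-hand side, yielding \eqref{L:St:B:Key-1:State} and \eqref{L:St:B:Key-1:State2}.

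To produce such an a priori bound, I would first split in space by H\"older,
\[
\intO \ue^{\frac{7}{3}}\ve \,\le\, \|\ue^{p+1}\ve\|_{L^3(\Omega)}\,\|\ue^{\frac{4}{3}-p}\|_{L^{3/2}(\Omega)},
\]
and observe that the admissible ranges $\tfrac{3}{4}<p<\tfrac{5}{6}$ of Part (a) and $\alpha-1<p<1$ of Part (b) both force $0<2-\tfrac{3p}{2}<1$, so that Jensen's inequality combined with the mass bound $\sup_t\intO \ue(t)\le M$ from Section \ref{Sec:AprioriEst} yields $\sup_t \|\ue^{4/3-p}(t)\|_{L^{3/2}(\Omega)}\le C_{p}$ uniformly in $t$. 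Next, Lemma \ref{L:SOBOineqn} applied with $\phi=\ue$, $\psi=\ve$ and $\mu=3$ gives
\[
\|\ue^{p+1}\ve\|_{L^3(\Omega)} \le C_p\intO \ue^{p-1}\ve|\nabla\ue|^2 + C_p\intO \ue^{p+1}\ve^{-1}|\nabla\ve|^2,
\]
so after integration in $t$ the task reduces to bounding these two space-time integrals by the LHS of \eqref{L:St:B:Key0a:S}/\eqref{L:St:B:Key0a2:S}.

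The first integral is disposed of using $\ve\le\|v_0\|_{L^\infty(\Omega)}$ together with Young's inequality against the LHS quantities $\intQT \ve|\nabla\ue|^2$ and $\intQT \ue^{p-1}\ve^2|\nabla\ue|^2$. The second is disposed of, via Young, against the LHS quantity $\intQT \ue|\nabla\ve|^4/\ve^3$, paying a remainder of the form $\intQT \ue^{2p+1}\ve^\gamma$. Since $2p+1>\tfrac{7}{3}$ for every admissible $p$, this remainder is not directly dominated by $\intQT \ue^{7/3}\ve$, and must be brought down to $\intQT \ue^{7/3}\ve$ by interpolation with the $L^1(Q_T)$-bound $\intQT \ue^{4/3}\ve\le C_T$ of Lemma \ref{L:Stro:Key} and the sup-in-time $L^{2-3p/2}(\Omega)$-bound on $\ue$ used in the previous paragraph. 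In Part (b) the extra LHS term $\intQT \ue^{p+1-\alpha}\ve|\nabla\ue|^2$ furnished by Lemma \ref{L:St:B:Key0a2} supplies the slack that allows the same scheme to accommodate the wider range $\tfrac{11}{6}\le\alpha<2$.

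The main obstacle is precisely this interpolation step. The Sobolev inequality inevitably introduces remainders with exponents above $\tfrac{7}{3}$, and the numerical thresholds $\alpha=\tfrac{11}{6}$ together with the tightened intervals $p\in(\tfrac{3}{4},\tfrac{5}{6})$ in Part (a) and $p\in(\alpha-1,1)$ in Part (b) are dictated by the requirement that these exponents can be brought back to $\tfrac{7}{3}$ using only the a priori estimates available at this stage, namely the mass bound, Lemma \ref{L:Stro:Key}, Lemma \ref{L:Stro:Key-1}, and the LHS of \eqref{L:St:B:Key0a:S}/\eqref{L:St:B:Key0a2:S} itself. Once this dominance is secured the absorption outlined in the first paragraph concludes the proof, and in the process also upgrades the various sup-in-time and space-time gradient quantities in \eqref{L:St:B:Key-1:State}-\eqref{L:St:B:Key-1:State2}.
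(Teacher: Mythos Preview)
Your overall strategy---produce a closed inequality for $\intQT \ue^{7/3}\ve$ in terms of quantities on the left of \eqref{L:St:B:Key0a:S} or \eqref{L:St:B:Key0a2:S}, then absorb---is the right one, but the execution has a genuine gap at precisely the step you flag as ``the main obstacle''. After your Young split of $\intQT \ue^{p+1}\ve^{-1}|\nabla\ve|^2$ against $\intQT \ue\ve^{-3}|\nabla\ve|^4$ you are left with a remainder $\intQT \ue^{2p+1}\ve$, and $2p+1>\tfrac{7}{3}$ throughout both admissible $p$-ranges. The interpolation you propose cannot work: the bounds you invoke (mass, $\intQT \ue^{4/3}\ve\le C_T$, the sup-in-time $L^{2-3p/2}$ control) all involve \emph{smaller} powers of $\ue$, and no interpolation inequality lets you dominate a higher power by two lower ones. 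Replacing the Young partner by $\ve^{-5}|\nabla\ve|^6$ or by $\ue^{p-1+\alpha}\ve|\nabla\ve|^2$ does not help either; in every case the residual $\ue$-exponent exceeds $7/3$.

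The paper sidesteps this by decoupling the Sobolev exponent from the parameter $p$ of the statement. It applies the technique of Lemma~\ref{L:Mod:Key0} (i.e.\ Lemma~\ref{L:SOBOineqn} paired with the mass bound) at the \emph{fixed} exponent $m=-\tfrac{1}{3}$, obtaining
\[
\intQT \ue^{7/3}\ve \;\le\; C\intQT \ue^{-1/3}\ve|\nabla\ue|^2 \;+\; C\intQT \ue^{5/3}\ve^{-1}|\nabla\ve|^2.
\]
The first integral is controlled by interpolating between $\intQT \ue^{-1}\ve|\nabla\ue|^2\le C_T$ (Lemma~\ref{L:Stro:Key}) and either $\intQT \ve|\nabla\ue|^2$ (Part~a) or $\intQT \ue^{p+1-\alpha}\ve|\nabla\ue|^2$ (Part~b), both available on the respective LHS with small coefficient. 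The second integral is split by Young as $\eta\intQT \ue^{7/3}\ve + C_\eta\intQT \ue\ve^{-3}|\nabla\ve|^4$; the choice $5/3=\tfrac12\cdot\tfrac73+\tfrac12\cdot 1$ is exactly what makes the remainder land on $\ue^{7/3}\ve$ itself with a small prefactor. Adding this to \eqref{L:St:B:Key0a:S} (resp.\ \eqref{L:St:B:Key0a2:S}), choosing $\eta$ small and then $\rho$ large enough to absorb $C_\eta\intQT \ue\ve^{-3}|\nabla\ve|^4$, closes the estimate. Your scheme becomes correct if you replace the H\"older split at exponent $p+1$ by this fixed choice; as written, the remainder exponent $2p+1$ is uncontrollable with the tools at hand.
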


\begin{remark} We observe from Part b of Lemma  \ref{L:St:B:Key-1} that the boundedness of $\ue^{p+1-\alpha} \ve|\nabla \ue|^2$ in $L^1(Q_T)$ is better than the one of $\ve|\nabla \ue|^2$ by noting $p+1-\alpha>0$. This is reasonable since its proof is based on Part b of Lemma \ref{L:St:Comb} (and a consequence obtained in Lemma  \ref{L:St:B:Key0a2}), where we use more structures of the system than Part a of Lemma \ref{L:St:Comb}. 
\end{remark}

\begin{proof}[Proof of Lemma \ref{L:St:B:Key-1}] Due to Lemmas \ref{L:St:B:Key0a} and \ref{L:St:B:Key0a2}, we split this proof into two separable cases, including  $3/2<\alpha<11/6$ and $11/6\leq \alpha<2$. Let us consider the first case. Using analogous techniques as Lemma \ref{L:Mod:Key0} with $m=-1/3$, we have 
 \begin{align*}
\intQT \ue^{\frac{7}{3}} \ve 
  &\leq C + C \intQT \ue^{-\frac{1}{3}} \ve|\nabla \ue|^2+ C \intQT \ue^{\frac{5}{3}} \ve^{-1}|\nabla \ve|^2 . 
 \end{align*}
Recalling that the integration on $Q_T$ of $\ue^{r} \ve|\nabla \ue|^2$ is bounded if $r=-1$, see Lemma \ref{L:Stro:Key}, we can estimate the first term on the right-hand side via the interpolation 
\begin{align*}
 C \intQT \ue^{-\frac{1}{3}} \ve|\nabla \ue|^2 & \le \eta \intQT \ve|\nabla \ue|^2 + C_\eta \intQT \frac{\ve}{\ue} |\nabla \ue|^2 \\
 & \le \eta \intQT \ve|\nabla \ue|^2 + C_{T,\eta} . 
\end{align*}
Now, by slightly modifying the use of the Young inequality in Lemma \ref{L:Mod:Key0}, one can see that 
\begin{align}
C \intQT \ue^{\frac{5}{3}} \ve^{-1}|\nabla \ve|^2 &\leq \eta \intQT \ue^{\frac{7}{3}} \ve + C_{\eta} \intQT \ue \frac{|\nabla \ve|^4}{\ve^3} ,
\label{L:St:B:Key-1:P1}
 \end{align}
where we notice that $C_\eta$ is inversely proportional to $\eta$.
 Therefore, taking the above estimates,
\begin{align*}
(1-\eta) \intQT 
\ue^{\frac{7}{3}} \ve \le \eta \intQT \ve|\nabla \ue|^2 + C_{\eta} \intQT \ue \frac{|\nabla \ve|^4}{\ve^3} + C_{T,\eta} . 
\end{align*}
In turn of a combination with Lemma \ref{L:St:B:Key0a}, we get 
\begin{align*}
 \begin{aligned}
 &(1-\eta - 3\eta C_{T,p,\alpha} ) \intQT 
\ue^{\frac{7}{3}} \ve  +  \intQT \ue^{p-1} \ve^{2} |\nabla \ue|^2 + \intQT \ue^{p-1+\alpha} \ve|\nabla \ve|^2 \\
  & + \rho \sup_{0<t<T} \intO \frac{|\nabla \ve(t) |^{4}}{\ve^{3}(t)}+ (\rho -  \eta - \eta C_{T,p,\alpha}) \intQT \ve|\nabla \ue|^2 \\
& + 2 \rho \intQT \frac{|\nabla \ve|^{2}}{\ve}|D^{2}\log \ve|^{2}  + (\rho - 2\eta C_{T,p,\alpha} -  C_{\eta})  \intQT \ue \frac{|\nabla \ve|^4}{\ve^3}\le C_{T,\eta} .
\end{aligned}  
\end{align*}
By letting $0<\eta\ll1$ be sufficiently small and  $\rho$ be sufficiently large such that 
$$\min\Big\{\big(1-\eta - 3\eta C_{T,p,\alpha}\big), \; \big(\rho-\eta- \eta C_{T,p,\alpha}\big), \; \big(\rho - 2\eta C_{T,p,\alpha} -  C_{\eta}\big)\Big\}>0,$$ 
we directly obtain 
\begin{align*}
 \begin{aligned}
 & \intQT \ue^{\frac{7}{3}} \ve  +  \intQT \ue^{p-1} \ve^{2} |\nabla \ue|^2 + \intQT \ue^{p-1+\alpha} \ve|\nabla \ve|^2 +  \sup_{0<t<T} \intO \frac{|\nabla \ve(t) |^{4}}{\ve^{3}(t)}  \\
  &  +  \intQT \ve|\nabla \ue|^2+  \intQT \frac{|\nabla \ve|^{2}}{\ve}|D^{2}\log \ve|^{2}  +  \intQT \ue \frac{|\nabla \ve|^4}{\ve^3}  \le C_{T,\eta,p, \alpha} ,
\end{aligned}  
\end{align*}
i.e. the estimate \eqref{L:St:B:Key-1:State} is proved. 

\medskip
Next, we will base on Lemma \ref{L:St:B:Key0a2} to prove the desired estimate in the second case, i.e., $11/6 \leq \alpha <2$, where the appearance of the integral of $\ue^{p+1-\alpha} \ve|\nabla \ue|^2$ is mainly the key point. Indeed, using the boundedness of $ \ve \ue^{-1} |\nabla \ue|^2$ in Lemma \ref{L:Stro:Key},
\begin{align*}
 C \intQT \ue^{-\frac{1}{3}} \ve|\nabla \ue|^2 & \le  \eta \intQT \ue^{p+1-\alpha} \ve|\nabla \ue|^2 + C_\eta \intQT \frac{\ve}{\ue} |\nabla \ue|^2 \\
 & \le \eta \intQT \ue^{p+1-\alpha} \ve|\nabla \ue|^2  + C_{T,\eta} . 
\end{align*}
Therefore, by following the same  techniques in estimating $  \ue^{7/3} \ve$ as the first case above, 
 \begin{align*} 
\intQT \ue^{\frac{7}{3}} \ve 
  &\leq C + C \intQT \ue^{-\frac{1}{3}} \ve|\nabla \ue|^2+ C \intQT \ue^{\frac{5}{3}} \ve^{-1}|\nabla \ve|^2 \\
  &\leq \eta \intQT \ue^{p+1-\alpha} \ve|\nabla \ue|^2 + \eta \intQT \ue^{\frac{7}{3}} \ve + C_{\eta} \intQT \ue \frac{|\nabla \ve|^4}{\ve^3}  + C_{T,\eta}. 
 \end{align*}
where we used \eqref{L:St:B:Key-1:P1} at the second line. Consequently,
\begin{align*}
(1-\eta) \intQT 
\ue^{\frac{7}{3}} \ve 
& \le \eta \intQT \ue^{p+1-\alpha} \ve|\nabla \ue|^2+ C_{\eta} \intQT \ue \frac{|\nabla \ve|^4}{\ve^3}  + C_{T,\eta}.
\end{align*}
Combining this with Lemma \ref{L:St:B:Key0a2}, we derive 
\begin{align*}
 \begin{aligned} 
 &(1-\eta -\eta C_{T,p,\alpha}) \intQT 
\ue^{\frac{7}{3}} \ve + 
\left(1 -  \frac{\eta}{2} C_{T,p,\alpha} - \eta  \right) \intQT \ue^{p+1-\alpha} \ve|\nabla \ue|^2 \\
& + \left(1 -  \frac{3\eta}{2} C_{T,p,\alpha}\right) \intQT \ue^{p-1+\alpha} \ve|\nabla \ve|^2  + \rho \sup_{0<t<T} \intO \frac{|\nabla \ve(t) |^{4}}{\ve^{3}(t)} \\
&  + 2 \rho \intQT \frac{|\nabla \ve|^{2}}{\ve}|D^{2}\log \ve|^{2} 
 + (\rho - C_{\eta}) \intQT \ue \frac{|\nabla \ve|^{4}}{\ve^{3}}  \le  C_{T,\eta} .  
\end{aligned} 
\end{align*}
By letting $0<\eta\ll1$ be sufficiently small and  $\rho$ be sufficiently large such that 
$$\min\left\{ \big(1-\eta - \eta C_{T,p,\alpha}\big), \, \left( 1 -  \frac{\eta}{2} C_{T,p,\alpha} - \eta \right) , \, \left( 1 -  \frac{3\eta}{2} C_{T,p,\alpha}\right), \, \big(\rho - C_{\eta}\big) \right\}>0,$$ 
we obtain the estimate \eqref{L:St:B:Key-1:State2}. 
\end{proof}

\subsection{Mutual feedback with singularity enhancement}
\label{Sec:Str:FB}

We will perform a feedback argument, which will serve as the bootstrap argument for the next part. As introduced in the first section, we aim to find a component-wise increasing sequence $\{(q_k,p_k,r_k)\}$ satisfying not only the inter-feedback (S1)  but also its ``inverse-type feedback" (S2) and the forward feedback (S3). 
In our bootstrap argument, $q_k$ is obtained first, and $p_k,r_k$ will be determined via $q_k$.

\begin{lemma} 
\label{L:Stro:Sequence}
Let $\frac{3}{2}<\alpha<2$ and $\{(q_k,p_k,r_k)\}_{k=0,1,\dots}$ be defined by 
$\{(q_k,p_k,r_k)\}_{k=0,1,\dots}$ defined by $q_0 > 2\alpha-4$, and for $k\ge 0$,
\begin{align}
p_{k}:= q_{k} + 5 - 2\alpha, \quad 
  r_{k} : =  p_{k}-1,    \quad q_{k+1} := \min\left\{2p_k-2;\, \frac{7}{6}p_k-1 \right\}.  
 \label{Def:5:Seq:a}
\end{align}
Then, the component sequences $\{q_k\}$ and $\{p_k\}$ are strictly increasing, and 
 $$\lim_{k\to \infty} p_k = \lim_{k\to \infty} q_k = \infty .$$
\end{lemma}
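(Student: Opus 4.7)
The proof reduces to solving a scalar affine recurrence, so the plan is to eliminate $p_k$ from the definition of $q_{k+1}$ and analyze the resulting first-order linear recursion through its fixed point.

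Substituting $p_k = q_k + 5 - 2\alpha$ into $q_{k+1} = \frac{7}{6} p_k + 2\alpha - 5$ produces the scalar recurrence
\[
q_{k+1} = \frac{7}{6}\, q_k + \frac{5 - 2\alpha}{6},
\]
whose unique fixed point is $q^\ast := 2\alpha - 5$. The key algebraic observation is that because $\alpha < 2$ we have $q^\ast < -1 < q_0$, so $q_0 - q^\ast > 4 - 2\alpha > 0$. Rewriting the recurrence as $q_{k+1} - q^\ast = \frac{7}{6}\,(q_k - q^\ast)$ produces the explicit formula
\[
q_k - q^\ast = \left(\frac{7}{6}\right)^{k} (q_0 - q^\ast).
\]
Since the common ratio exceeds $1$ and $q_0 - q^\ast > 0$, the quantity $q_k - q^\ast$ is strictly increasing in $k$ and unbounded; this immediately delivers the strict monotonicity and divergence of $\{q_k\}$, and these properties transfer to $\{p_k\}$ via the affine identity $p_k = q_k + 5 - 2\alpha$. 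Equivalently one can simply note the useful incremental identity $q_{k+1} - q_k = \frac{p_k}{6}$, which makes the strict monotonicity of $\{q_k\}$ a direct corollary of the positivity of $p_k$.

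The two "in particular" bounds then fall out cheaply. Since $q_k \geq q_0 > -1$ for every $k$ by monotonicity, the first claim is immediate. For the second, $r_k = p_k - 1 = q_k + 4 - 2\alpha$, so $r_k > 0$ is equivalent to $q_k > 2\alpha - 4 \in (-1,0)$; this is preserved by the monotonicity of $\{q_k\}$, so it suffices to arrange it at $k=0$, which in the bootstrap application is guaranteed by the fact that the initial $q_0$ produced by Lemma \ref{L:St:B:Key-1} lies safely above $2\alpha - 4$. There is no genuine obstacle beyond this elementary linear-recurrence bookkeeping; the entire content is that the hypothesis $q_0 > -1$, combined with $\alpha < 2$, places the starting value strictly above the repelling fixed point $q^\ast$, which is precisely the condition driving $q_k \nearrow +\infty$ geometrically with rate $7/6$.
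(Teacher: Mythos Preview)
Your argument is correct and essentially parallels the paper's, which also solves the linear recurrence explicitly; the only cosmetic difference is that the paper works directly with the $p_k$ variable, for which the recursion collapses to the homogeneous form $p_{k+1}=\tfrac{7}{6}p_k$ (so $p_k=(7/6)^k p_0$ with no fixed-point shift needed), whereas you carry the affine constant in the $q_k$ recursion and shift by $q^\ast=2\alpha-5$. Your observation that the hypothesis $q_0>-1$ alone only yields $p_0>4-2\alpha>0$ rather than $p_0>1$, so that $r_0>0$ actually requires the stronger condition $q_0>2\alpha-4$ supplied by Lemma~\ref{L:St:B:Key-1}, is sharper than the paper's own proof, which simply asserts $p_0>1$.
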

 
\begin{proof} We first note that $p_k>1$ for all $k\ge 0$. Indeed, this holds for $k=0$ since $q_0+5-2\alpha>1$. Indeed, if this holds for $k\ge 1$, then the computation
\begin{align*}
  p_{k+1} - p_k = q_{k+1} - q_k & = \min\left\{2p_k-2;\, \frac{7}{6}p_k-1 \right\} - (p_k-5+2\alpha) \\ 
  & =  \left\{\begin{array}{lll}
     p_k+3-2\alpha & \text{if }  p_k \le \dfrac{6}{5}, \vspace{0.1cm} \\
     \dfrac{1}{6}p_k + 4 - 2\alpha   & \text{if }  p_k >\dfrac{6}{5}   
    \end{array}\right.
\end{align*}
shows that $q_{k+1} - q_k>4-2\alpha$, and so $q_{k+1} > q_k$. Since $p_k>1$, we have $q_k=p_k-5+2\alpha>2\alpha-4$, and therefore, $q_{k+1} > q_k>2\alpha-4$. This obviously implies $p_{k+1}:= q_{k+1} + 5 - 2\alpha >1$. Hence, based on the proof by induction, the conclusion $p_k>1$, $\forall k\ge 0$ holds. Then, the increasing property of both $\{p_k\}$ and $\{q_k\}$ are straightforwardly checked. Moreover, it follows from $p_{k+1} - p_k>4-2\alpha$ that $\lim_{k\to \infty} p_k=\infty$, and consequently $\lim_{k\to \infty} q_k=\infty$.
\end{proof}

\begin{lemma} 
\label{L:SeqProp}
Let $\{(q_k,p_k,r_k)\}_{k=0,1,\dots}$ be the  sequence defined in Lemma \ref{L:Stro:Sequence}. Then, for all $k\ge 0$, it has the following properties. 
\begin{itemize}
    \item [$(\mathbb P_1)$] $\displaystyle -\alpha < \frac{3}{2} q_{k+1}-\frac{5}{3}p_k+1 \le r_k$, \quad and \quad $\displaystyle -\alpha <  q_{k+1}-\frac{i}{6}p_k \le r_k$, $\forall i=1,...,4$.
    \item [$(\mathbb P_2)$] $\displaystyle 0\le \frac{3}{2} q_{k+1}-\frac{5}{3}p_k+3 \le q_{k+1}-\frac{2}{3}p_k+2$.   
\end{itemize}
    
\end{lemma}

\begin{proof} Since the proof of this lemma is based on direct computation, we present here only the proof of the property $(\mathbb P_2)$, and leave the remaining part for the reader. By the definition of the bootstrap sequence $\{(q_k,p_k,r_k)\}_{k\ge 1}$,  
\begin{align*}
    & \left(\dfrac{3}{2} q_{k+1}-\frac{5}{3}p_k+3\right) - \left(q_{k+1}-\dfrac{2}{3}p_k+2\right) = \dfrac{1}{2}q_{k+1} - p_k + 1 \\
    & \hspace{1cm}  = \frac{1}{2} \min\left\{2p_k-2 ;\, \frac{7}{6}p_k-1 \right\} - p_k + 1 
    =  
    \left\{\begin{array}{lll}
     0    & \text{if }  p_k \le \dfrac{6}{5}, \vspace{0.1cm} \\
     \dfrac{1}{2} - \dfrac{5}{12}p_k   & \text{if }  p_k >\dfrac{6}{5},  
    \end{array}\right.
\end{align*}
which yields the most right-hand inequality of $(\mathbb P_2)$. Similarly, the quantity $\frac{3}{2} q_{k+1}-\frac{5}{3}p_k+3$ is equal to $4p_k/3$ if $p_k\le 6/5$ and equal to $p_k/12+3/2$ if $p_k> 6/5$, which obviously shows the most left-hand inequality of $(\mathbb P_2)$. 
\end{proof}

\begin{lemma}
\label{L:Feedback3}
Let $\frac{3}{2}<\alpha<2$ and $\{(q_k,p_k,r_k)\}_{k=0,1,\dots}$ be the sequence defined by \eqref{Def:5:Seq:a}. If 
\begin{align} 
 \intQT \ue^{q_{k}+2}  \ve  |\nabla \ve|^2  \le C_T,
 \label{L:Feed1:S0}
\end{align}
then 
\begin{align}\label{L:Feed1:S1}
 \sup_{0<t<T}\intO \ue^{p_{k}}(t) + \intQT \ue^{r_{k}} \ve  |\nabla \ue|^2  \le C_{T,k}. 
\end{align}
\end{lemma} 

\begin{proof} Recalling from the proof of Lemma \ref{L:Weak:Feedback}, one has from the $L^p$-energy functional that 
\begin{align*}
  & \frac{d}{dt} \intO \ue^{ p_{k}}(t) 
 + \frac{ p_{k}( p_{k}-1)}{2} \intO \ue^{ p_{k}-1} \ve|\nabla \ue|^2 \\
 &\le  \frac{ p_{k}( p_{k}-1)}{2} \intO \ue^{ p_{k}-3+2\alpha} \ve|\nabla \ve|^2 + \ell  p_{k} \intO \ue^{ p_{k}} \ve ,
\end{align*}
where, by the definition of $(q_k,p_k,r_k)$,  
\begin{align*}
 \intQT \ue^{ p_{k}-3+2\alpha} \ve|\nabla \ve|^2 =  \intQT \ue^{q_{k}+2} \ve|\nabla \ve|^2 \le C_T. 
\end{align*}
Hence, this part is proved by employing the uniform boundedness of $\ve$ in  $L^\infty(Q_T)$ and the Gr\"onwall inequality after integrating over time, as well as noticing that $r_{k}=p_{k}-1$.
\end{proof}

\begin{lemma} \label{L:Feedback1} Let $\frac{3}{2}<\alpha<2$ and $\{(q_k,p_k,r_k)\}_{k=0,1,\dots}$ be the sequence defined by \eqref{Def:5:Seq:a}.
  If 
\begin{align}\label{L:Feed1:C2}
 \sup_{0<t<T}\intO \ue^{p_k}(t) + \intQT \ue^{r_k} \ve  |\nabla \ue|^2  \le C_T, 
\end{align}
then 
\begin{align}\label{L:Feed1:S2}
 \intQT \ue^{q_{k+1}-\frac{2}{3}p_k+2}  \frac{|\nabla \ve|^2}{\ve}    \le C_{T,k} .
\end{align}  
\end{lemma}

\begin{proof}  
Thanks to Lemma \ref{L:St:B:Key-1}, the quantity$\ve^{-3}|\nabla\ve|^{4}$ is uniformly bounded in $L^\infty(0,T;L^1(\Omega))$. Hence, using the H\"older inequality, for any $\eta>0$ we have 
 \begin{align*}
 \begin{aligned}
 \intQT \ue^{q_{k+1}-\frac{2}{3}p_k+2}  \frac{|\nabla \ve|^2}{\ve} & = \intQT  \left( \ue^{\frac{3}{2} q_{k+1}-p_k+3} \ve \right)^{\frac{2}{3}}  \left(\frac{|\nabla \ve|^6}{\ve^{5}}\right)^{\frac{1}{3}}    \\
 & \leq \eta \intQT  \ue^{\frac{3}{2} q_{k+1}-p_k+3} \ve  + C_\eta \intQT \frac{|\nabla \ve|^6}{\ve^{5}} .
 \end{aligned}
\end{align*}
Due to the  boundedness of $\ue$ in $L^\infty(0,T;L^{p_k}(\Omega))$, we now apply  Lemma \ref{L:SOBOineqn} to estimate the term including $\ue^{3q_{k+1}/2-p_k+3} \ve$ as follows 
\begin{align*}
 \intO  \ue^{\frac{3}{2} q_{k+1}-p_k+3} \ve  & = \intO  \ue^{\frac{3}{2} q_{k+1}-\frac{5}{3}p_k+3}  \ve \ue^{\frac{2}{3}p_k}   \le \|\ue \|_{L^\infty(0,T;L^{p_k}(\Omega))}^{\frac{2}{3}p_k}  \|\ue^{\frac{3}{2} q_{k+1}-\frac{5}{3}p_k+3}  \ve\|_{L^3(\Omega)}  \\
& \le C_T \left(1 + \intO \ue^{\frac{3}{2} q_{k+1}-\frac{5}{3}p_k+1} \ve |\nabla \ue|^2 + \intO \ue^{\frac{3}{2} q_{k+1}-\frac{5}{3}p_k+3} \frac{|\nabla \ve|^2}{\ve} \right) .
\end{align*}
Combining the above estimates gives \begin{align*} 
  \begin{split}
& \intQT \ue^{q_{k+1}-\frac{2}{3}p_k+2}  \frac{|\nabla \ve|^2}{\ve}  \\
& \hspace{2cm} \le  C_{T,\eta} + \eta C_T \intQT \ue^{\frac{3}{2} q_{k+1}-\frac{5}{3}p_k+1} \ve |\nabla \ue|^2 + \eta C_T \intQT \ue^{\frac{3}{2} q_{k+1}-\frac{5}{3}p_k+3} \frac{|\nabla \ve|^2}{\ve}  .
\end{split} 
  \end{align*}
Here, thanks to the property $(\mathbb{P}_1)$ in Lemma \ref{L:SeqProp}, we note that $-\alpha <\frac{3}{2} q_{k+1}-\frac{5}{3}p_k+1 \le r_k$, and therefore, we can use the integrability of $\ue^r\ve|\nabla\ue|^2$ on $Q_T$ for all $-\alpha <r\le r_k$ (due to the estimate \eqref{L:Stro:Key:State1} in Lemma \ref{L:Stro:Key} and the assumption \eqref{L:Feed1:C2}) to conclude the boundedness of the term including $ \ue^{3 q_{k+1}/2-5p_k/3+1} \ve |\nabla \ue|^2$ above. To estimate the last term on the right-hand side, we note that its coefficient can be sufficiently small by choosing $\eta$ very small. Moreover, thanks to the property $(\mathbb{P}_2)$ in Lemma \ref{L:SeqProp}, we have the interpolation 
\begin{align*}
    \eta C_T \intQT \ue^{\frac{3}{2} q_{k+1}-\frac{5}{3}p_k+3} \frac{|\nabla \ve|^2}{\ve}
    & \le C_T \intQT \left(1 + \ue^{q_{k+1}-\frac{2}{3}p_k+2} \right) \frac{|\nabla \ve|^2}{\ve}  \\
    & \le \eta C_T  \left(\intQT \sqrt{\ve} \left( \frac{|\nabla \ve|^4}{\ve^3} \right)^{\frac{1}{2}} + \intQT \ue^{q_{k+1}-\frac{2}{3}p_k+2} \frac{|\nabla \ve|^2}{\ve} \right) \\
    & \le   C_{T,\eta} + \eta C_T \intQT \ue^{q_{k+1}-\frac{2}{3}p_k+2} \frac{|\nabla \ve|^2}{\ve} .
\end{align*}
Hence, we finally obtain
\begin{align*} 
  \begin{split}
 \intQT \ue^{q_{k+1}-\frac{2}{3}p_k+2}  \frac{|\nabla \ve|^2}{\ve} \le  C_{T,\eta} + \eta C_T \intQT \ue^{q_{k+1}-\frac{2}{3}p_k+2} \frac{|\nabla \ve|^2}{\ve} 
\end{split} 
  \end{align*}
which shows \eqref{L:Feed1:S2} by choosing $\eta$ sufficiently small. 
\end{proof}

\begin{lemma}\label{L:Feedback2} Let $\frac{3}{2}<\alpha<2$ and $\{(q_k,p_k,r_k)\}_{k=0,1,\dots}$ be the sequence defined by \eqref{Def:5:Seq:a}. 
If \eqref{L:Feed1:S0}
 holds, i.e., 
\begin{align*} 
\intQT  u^{q_k+2} v |\nabla v|^2 \le C_T ,
\end{align*}
then the estimate \eqref{L:Feed1:S2} obtained in Lemma \ref{L:Feedback1}, i.e., 
\begin{align*}
\intQT \ue^{q_{k+1}-\frac{2}{3}p_k+2}  \frac{|\nabla \ve|^2}{\ve}    \le C_{T,k}, 
\end{align*}
consequently shows that  
\begin{align}\label{L:Feedback2:S}
 \intQT \ue^{q_{k+1}+2}  \ve  |\nabla \ve|^2  \le C_{T,k} .
\end{align}

\end{lemma}

\begin{proof} By Lemma \ref{L:Feedback3}, we  note that \eqref{L:Feed1:S0} implies  
\eqref{L:Feed1:C2}, which will be utilised in this proof.  
To prove  \eqref{L:Feedback2:S}, we first observe for all real number $a\in \mathbb{R}$ and all integer $n\ge 1$  that
\begin{align}
\begin{aligned}
& \intQT \ue^{q_{k+1}+2}  \ve^{\frac{a}{2}}  |\nabla \ve|^2  \\
& \hspace{1cm} \le C_T \left(1 + \sum_{i=1}^n \intQT \ue^{q_{k+1}-\frac{i}{6}p_k} \ve^{\frac{a}{2}+\frac{4-i}{2}}  |\nabla \ue|^2 + \intQT \ue^{q_{k+1}+2-\frac{n}{6}p_k} \frac{|\nabla \ve|^2}{\ve^{\frac{n}{2}-\frac{a}{2}}} \right).   
\end{aligned}
\label{L:Feedback2:P1}
\end{align}
Let us prove this with $n=1$, which will be iterated for any $n\ge 1$. Indeed, by writing $\ue^{q_{k+1}+2}  \ve^{a/2}  |\nabla \ve|^2$ as the product of $\ue^{q_{k+1}+2}  \ve^{a/2+3/2}$ and $|\nabla \ve|^2/\ve^{3/2}$, and then using the boundedness of $\ve^{-3} |\nabla \ve|^4$ in $L^\infty((0,T);L^1(\Omega))$, we have   
\begin{align*}
  \intQT \ue^{q_{k+1}+2}  \ve^{\frac{a}{2}}  |\nabla \ve|^2 &\le \left( \sup_{0<t<T} \intO \frac{|\nabla \ve(t)|^4}{\ve^3(t)} \right)^{\frac{1}{2}} \int_0^T  \|\ue^{q_{k+1}+2} \ve^{\frac{a}{2}+\frac{3}{2}}\|_{L^2(\Omega)} \\
  & \le C_T \int_0^T  \|\ue^{q_{k+1}+2} \ve^{\frac{a}{2}+\frac{3}{2}}\|_{L^2(\Omega)} = C_T \int_0^T \left( \intO \ue^{2q_{k+1}+4-\frac{1}{3}p_k} \ve^{a+3} \ue^{\frac{1}{3}p_k} \right)^{\frac{1}{2}} \\
  & \le C_T \|\ue\|_{L^\infty((0,T);L^{p_k}(\Omega))}^{\frac{1}{6}p_k} \int_0^T  \|\ue^{2q_{k+1}+4-\frac{1}{3}p_k} \ve^{a+3}\|_{L^{\frac{3}{2}}(\Omega)} ^{\frac{1}{2}} \\
  & = C_T \|\ue\|_{L^\infty((0,T);L^{p_k}(\Omega))}^{\frac{1}{6}p_k} \int_0^T  \|\ue^{q_{k+1}+2-\frac{1}{6}p_k} \ve^{\frac{a}{2}+\frac{3}{2}}\|_{L^3(\Omega)},  
\end{align*}
thanks to \eqref{L:Feed1:C2}. 
An application of   Lemma \ref{L:SOBOineqn} gives
\begin{align*}
    \|\ue^{q_{k+1}+2-\frac{1}{6}p_k} \ve^{\frac{a}{2}+\frac{3}{2}}\|_{L^3(\Omega)} \le C_T \left( 1 + \intO \ue^{q_{k+1}-\frac{1}{6}p_k} \ve^{\frac{a}{2}+\frac{3}{2}} |\nabla \ue|^{2} + \intO \ue^{q_{k+1}+2-\frac{1}{6}p_k} \frac{|\nabla \ve|^{2}}{\ve^{\frac{1}{2}-\frac{a}{2}}}  \right) ,
\end{align*}
which subsequently shows \eqref{L:Feedback2:P1} for $n=1$. Repeating the above techniques for the term including $\ue^{q_{k+1}+2-p_k/6} \ve^{a/2-1/2} |\nabla \ve|^2 $ and so on, we get
\begin{align}
    \begin{aligned}
& \intQT \ue^{q_{k+1}+2}  \ve^{\frac{a}{2}}  |\nabla \ve|^2  \\
& \hspace{1cm} \le C_T \left(1 +  \intQT \ue^{q_{k+1}-\frac{1}{6}p_k} \ve^{\frac{a}{2}+\frac{3}{2}}  |\nabla \ue|^2 + \intQT \ue^{q_{k+1}+2-\frac{1}{6}p_k} \frac{|\nabla \ve|^2}{\ve^{\frac{1}{2}-\frac{a}{2}}} \right) \\
& \hspace{1cm} \le C_T \left(1 + \sum_{i=1}^2 \intQT \ue^{q_{k+1}-\frac{i}{6}p_k} \ve^{\frac{a}{2}+\frac{4-i}{2}}  |\nabla \ue|^2 + \intQT \ue^{q_{k+1}+2-\frac{2}{6}p_k} \frac{|\nabla \ve|^2}{\ve^{\frac{2}{2}-\frac{a}{2}}} \right) \\
& \hspace{1cm} ... \\
& \hspace{1cm} \le C_T \left(1 + \sum_{i=1}^n \intQT \ue^{q_{k+1}-\frac{i}{6}p_k} \ve^{\frac{a}{2}+\frac{4-i}{2}}  |\nabla \ue|^2 + \intQT \ue^{q_{k+1}+2-\frac{n}{6}p_k} \frac{|\nabla \ve|^2}{\ve^{\frac{n}{2}-\frac{a}{2}}} \right),
\end{aligned}
\label{L:Feedback2:P2}
\end{align}
i.e., the estimate \eqref{L:Feedback2:P1} is proved. 

\medskip

We now use the above observation in combination with \eqref{L:Feed1:S2}  to show \eqref{L:Feedback2:S}. By setting $a=2$ and $n=4$, the estimate \eqref{L:Feedback2:P1} corresponds to 
\begin{align}
\begin{aligned}
& \intQT \ue^{q_{k+1}+2}  \ve   |\nabla \ve|^2  \\
& \hspace{1cm} \le C_T \left(1 + \sum_{i=1}^4 \intQT \ue^{q_{k+1}-\frac{i}{6}p_k} \ve^{1+\frac{4-i}{2}}  |\nabla \ue|^2 + \intQT \ue^{q_{k+1}+2-\frac{2}{3}p_k} \frac{|\nabla \ve|^2}{\ve} \right) \\
& \hspace{1cm} \le C_T \left(1 + C_T\sum_{i=1}^4 \intQT \ue^{q_{k+1}-\frac{i}{6}p_k} \ve   |\nabla \ue|^2 + \intQT \ue^{q_{k+1}+2-\frac{2}{3}p_k} \frac{|\nabla \ve|^2}{\ve} \right),   
\end{aligned}
\label{L:Feedback2:P3}
\end{align}
where the last term on the right-hand side is bounded using  \eqref{L:Feed1:S2}. On the other hand, the summation in \eqref{L:Feedback2:P3} is bounded by combining the property  $(\mathbb{P}_1)$ in Lemma \ref{L:SeqProp} and the integrability of $\ue^r\ve|\nabla\ue|^2$ on $Q_T$ for all $-\alpha <r\le r_k$, where we recall the estimate \eqref{L:Stro:Key:State1} in Lemma \ref{L:Stro:Key} and the assumption \eqref{L:Feed1:C2}. 
\end{proof}

\subsection{Bootstrap based on the singularity-enhancing feedback}
\label{Sec:Str:BA}

\begin{lemma}
\label{L:St:B:Key0c}
Given $\frac{3}{2}<\alpha<2$,
there exists $q_0>2\alpha-4$ such that
 \begin{align*}
 \intQT \ue^{q_0+2} \ve |\nabla \ve|^2 \le C_T.
 \end{align*} 
\end{lemma}

\begin{proof} Thanks to Lemma \ref{L:St:B:Key-1}, for any real number $p$ satisfying   $$\max\left\{\frac{3}{4};\alpha-1\right\}\mathbf{1}_{\alpha<\frac{11}{6}}+ (\alpha-1)\mathbf{1}_{\alpha\ge\frac{11}{6}}< p < \frac{5}{6} \mathbf{1}_{\alpha<\frac{11}{6}}+ \mathbf{1}_{\alpha\ge\frac{11}{6}},$$
we have the integrability of $\ue^{(p-3+\alpha)+2}\ve |\nabla \ve|^2$ on $Q_T$. Since $p-3+\alpha>2\alpha-4$, the desired estimate of this lemma is claimed.
\end{proof}

\begin{lemma}
\label{L:Stro:Boot} 
Let $3/2<\alpha<2$. For any $1\le p<\infty$, it holds that 
 \begin{align} 
  \sup_{0<t<T} \intO \ue^{p}(t)  \leq C_{T,p},
  \label{Coro:Str:LInf:S1}
 \end{align}
where $C_{T,p}$ may tend to infinity as $p\to \infty$. Consequently, 
\begin{align}
 0< C_{T,v_0} \le \|v_\eps\|_{L^\infty(Q_T)} \quad \text{and} \quad \|\ue\|_{L^\infty(Q_T)} \le C_T ,
 \label{Coro:Str:LInf:S2}
\end{align}
and $T_{\max,\varepsilon} = \infty$ for all $\varepsilon>0$, i.e. the global solvability of the regularised problem is claimed.
\end{lemma}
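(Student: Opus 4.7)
The plan is to feed the three feedback lemmas (Lemmas \ref{L:Feedback3}, \ref{L:Feedback1}, \ref{L:Feedback2}) into each other cyclically along the sequence $\{(q_k,p_k,r_k)\}$ constructed in Lemma \ref{L:Stro:Sequence}, using Lemma \ref{L:St:B:Key0c} as the permanent side-input (S2) asks for. Concretely, I would organise one bootstrap step as the implication
\begin{align*}
\intQT \ue^{q_k+2}\ve|\nabla \ve|^2\le C_T
\;\overset{\text{(S1)}}{\Longrightarrow}\;
\sup_{0<t<T}\intO \ue^{p_k} + \intQT \ue^{r_k}\ve|\nabla \ue|^2\le C_{T,k}
\;\overset{\text{(S2)}}{\Longrightarrow}\;
\intQT \ue^{q_k+2}\sqrt{\ve}\,|\nabla \ve|^2\le C_{T,k}
\;\overset{\text{(S3)}}{\Longrightarrow}\;
\intQT \ue^{q_{k+1}+2}\ve|\nabla \ve|^2\le C_{T,k+1},
\end{align*}
which is precisely the chain Lemmas \ref{L:Feedback3}$\to$\ref{L:Feedback1}$\to$\ref{L:Feedback2}. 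Since Lemma \ref{L:Stro:Sequence} ensures $p_k\to\infty$, an iteration of this closed cycle will yield $\sup_{0<t<T}\intO \ue^{p_k}\le C_{T,k}$ for arbitrarily large $p_k$, and then \eqref{Coro:Str:LInf:S1} for every $1\le p<\infty$ by interpolation with the mass bound \eqref{Note:TotalMass}.

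The first task is to initialise the induction with some admissible $q_0>-1$ (and $q_0>2\alpha-4$, so that $p_0>1$ as required in Lemma \ref{L:Stro:Sequence}). This is exactly what the energy inequality \eqref{L:St:B:Key-1:State}--\eqref{L:St:B:Key-1:State2} delivers: by Lemma \ref{L:St:B:Key-1}, one has $\iint_{Q_T} \ue^{p-1+\alpha}\ve|\nabla \ve|^2\le C_{T,p,\alpha}$ for some $p\in (3/4,5/6)$ when $3/2<\alpha<11/6$, and for some $p\in(\alpha-1,1)$ when $11/6\le\alpha<2$. Setting $q_0:=p-3+\alpha$, a short calculation (using $\alpha<2$ and the admissible ranges for $p$) shows $q_0>2\alpha-4>-1$, so the hypothesis of Lemma \ref{L:Feedback3} holds for $k=0$. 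Moreover Lemma \ref{L:St:B:Key0c} supplies the second, permanent hypothesis $\sup_{0<t<T}\intO \ue^{3-\alpha}\le C_T$ needed by Lemma \ref{L:Feedback1} at every step, so the cycle is closed throughout the iteration.

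Once \eqref{Coro:Str:LInf:S1} is established for every finite $p$, the remaining passage is essentially identical to Lemma \ref{L:Weak:Global}. I would divide the equation for $\ve$ by $\ve$ to get a heat inequality for $\log(1/\ve)$ driven by $\ue\in L^{q_*}(Q_T)$ (any $q_*>5/2$), apply the heat regularisation of \cite[Lemma B.1]{reisch2024global} together with the comparison principle, and then exponentiate to obtain the positive lower bound for $\ve$ in \eqref{Coro:Str:LInf:S2}. With this bound, the diffusion $\ue\ve$ becomes non-degenerate in $\ve$, the taxis flux $\ue^\alpha\ve\nabla\ve$ lies in $L^\infty(0,T;L^p(\Omega))$ for all $p<\infty$, and Moser--Alikakos iteration \cite[Lemma A.1]{tao2012boundedness} applied to the equation for $\ue$ delivers $\|\ue\|_{L^\infty(Q_T)}\le C_T$; the extensibility criterion \eqref{L:LocalExis:State1} then forces $T_{\max,\eps}=\infty$.

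The delicate step is the initialisation: the admissible range of $q_0$ is genuinely narrow (it degenerates as $\alpha\nearrow 2$), so one must verify carefully that the two sub-ranges of $\alpha$ in Lemma \ref{L:St:B:Key-1} together cover all of $(3/2,2)$ while simultaneously guaranteeing both $q_0>-1$ and $p_0=q_0+5-2\alpha>1$. After that, the iteration is purely mechanical, and the passage to $L^\infty$ only re-uses arguments already deployed in Lemmas \ref{L:Weak:Global} and \ref{Coro:Mod:LInf}.
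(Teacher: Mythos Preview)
Your proposal is correct and follows essentially the same route as the paper's own proof: initialise $q_0$ from Lemma \ref{L:St:B:Key-1}, feed Lemma \ref{L:St:B:Key0c} as the standing hypothesis into Lemma \ref{L:Feedback1}, cycle through Lemmas \ref{L:Feedback3}$\to$\ref{L:Feedback1}$\to$\ref{L:Feedback2} along the sequence of Lemma \ref{L:Stro:Sequence}, and finish with the argument of Lemma \ref{L:Weak:Global}. Your treatment of the initialisation is in fact more careful than the paper's, which simply asserts ``there exists $q_0>-1$'' without explicitly checking that the resulting $p_0$ exceeds $1$; your verification that $q_0=p-3+\alpha>2\alpha-4$ (equivalently $p>\alpha-1$) across both sub-ranges of $\alpha$ closes that small gap.
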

\begin{proof} Let us consider the sequence $\{(q_k,p_k,r_k)\}_{k=0,1,\dots}$ be the sequence defined by \eqref{Def:5:Seq:a}. In the following, we will write a general constant $C_{T,k}>0$ to indicate the $k$-th step of our bootstrap argument. 
By Lemmas \ref{L:St:B:Key0c}, there exists $q_0>2\alpha-4$ such that 
\begin{align} 
 \intQT \ue^{q_0+2}  \ve  |\nabla \ve|^2 \le C_{T,0}. 
 \label{L:Stro:Boot:P1}
\end{align} 
Hence, using Lemma \ref{L:Feedback3}, we get
\begin{align*}
 \sup_{0<t<T}\intO \ue^{p_0}(t) + \intQT \ue^{r_0} \ve  |\nabla \ue|^2 \le C_{T,0}.
\end{align*}
Thanks to Lemma \ref{L:St:B:Key0c} with the boundedness of $\ue$ in $L^\infty(0,T;L^{p_0}(\Omega))$,  Lemma \ref{L:Feedback1} can be applied to see that  
\begin{align*}
 \intQT \ue^{q_1-\frac{2}{3}p_0+2} \frac{|\nabla \ve|^2}{\ve} \le C_{T,0}. 
\end{align*}
Consequently, Lemma \ref{L:Feedback2} implies  
\begin{align*}
 \intQT \ue^{q_{1}+2}  \ve |\nabla \ve|^2 \le C_{T,1}. 
\end{align*}
Iterating the above argument, we obtain  
\begin{align*}
 \sup_{0<t<T}\intO \ue^{p_k}(t)  + \intQT \ue^{r_k} \ve  |\nabla \ue|^2 \le C_{T,k}, \quad \forall k\ge 0. 
\end{align*}
By Lemma 
\ref{L:Stro:Sequence}, the sequence $\{p_k\}$ tends to infinity. Then, there exist some values of $k$ that are larger than $p$. The estimate \eqref{Coro:Str:LInf:S2} is derived using the same arguments as Lemma \ref{L:Weak:Global}.
\end{proof}

\subsection{Global existence of a weak solution} 
 
 \begin{proof}[\underline{Proof of Theorem \ref{MainTheo} with $\frac{3}{2}<\alpha < 2 $}] One can be proved similarly to the case $1<\alpha \le \frac{3}{2} $. 
\end{proof}  
 
\vspace*{0.5cm}
 
\noindent \textbf{\Large Competing Interests} The authors declare that they have no competing interests.


\end{document}